%------------------------------------------------------------------------------
% Beginning of journal.tex
%------------------------------------------------------------------------------
%
% AMS-LaTeX version 2 sample file for journals, based on amsart.cls.
%
%        ***     DO NOT USE THIS FILE AS A STARTER.      ***
%        ***  USE THE JOURNAL-SPECIFIC *.TEMPLATE FILE.  ***
%
% Replace amsart by the documentclass for the target journal, e.g., tran-l.
%

\documentclass{amsart}

\usepackage{graphicx}

\newtheorem{theorem}{Theorem}[section]
\newtheorem{lemma}[theorem]{Lemma}
\newtheorem{proposition}[theorem]{Proposition}

\theoremstyle{definition}

\theoremstyle{remark}
\newtheorem{remark}[theorem]{Remark}

\numberwithin{equation}{section}

%%%%%%%%%%%%%%%%%%%%%%%%%%%%%%%%%%%%%%%%%%%%%%%%%%%%%%%%%%%%%%%%%%%%%%%%%%%%%%%%%%%%%%%%%%%%%%%%%%%%%%%%%%%%%%%%%%%%%%%%%%%%%%%%%%%%
\usepackage{ulem}
\usepackage{amsrefs}

\usepackage{color}

\let\oldmarginpar\marginpar
\renewcommand\marginpar[1]{\-\oldmarginpar[\raggedleft\footnotesize #1]
{\raggedright\footnotesize #1}}

\usepackage[T1]{fontenc}
\usepackage{lmodern}
\usepackage[latin1]{inputenc}

\usepackage{setspace}
\setlength\parskip{\medskipamount}
\setlength\parindent{0pt}
\usepackage{indentfirst}

\usepackage{multicol}
\setlength{\columnseprule}{1pt}
\setlength{\columnsep}{15pt}

\usepackage{geometry}
\usepackage{hyperref}
\usepackage{amsmath,amssymb}
\usepackage{mathrsfs}
\usepackage{amsthm}

\theoremstyle{plain}

\newtheorem*{theorem*}{Theorem}
\newtheorem*{corollary*}{Corollary}

\newcommand{\cV}{\mathcal{V}}

\newcommand{\cE}{\mathcal{E}}

\newcommand{\C}{\mathbb{C}}
\newcommand{\CP}{\mathbb{C}\mathbf{P}}
\newcommand{\R}{\mathbb{R}}
\newcommand{\Z}{\mathbb{Z}}

\renewcommand{\emptyset}{\varnothing}
\newcommand{\Lie}{\text{Lie}}

\DeclareMathOperator{\Ima}{Im}
\DeclareMathOperator{\Hom}{Hom}

%%%%%%%%%%%%%%%%%%%%%%%%%%%%%%%%%%%%%%%%%%%%%%%%%%%%%%%%%%%%%%%%%%%%%%%%%%%%%%%%%%%%%%%%%%%%%%%%%%%%%%%%%%%%%%%%%%%%%%%%%%%%%%%%%%%%

\begin{document}

\title{Poisson Cohomology of holomorphic toric Poisson manifolds. II.}

\author{Wei Hong}
\address{School of Mathematics and Statistics, Wuhan University, Wuhan, 430072, China}
\address{Hubei Key Laboratory of Computational Science, Wuhan University, Wuhan, 430072, China}
\email{hong\textunderscore  w@whu.edu.cn}

\keywords{toric varieties, holomorphic multi-vector fields, holomorphic Poisson manifolds, Poisson cohomology}

\begin{abstract}
In this paper, we give a description of holomorphic multi-vector fields on  smooth compact toric varieties, which generalizes Demazure's result of holomorphic vector fields on toric varieties. Based on the result, we compute the Poisson cohomology groups of holomorphic toric Poisson manifolds, \i.e., toric varieties endowed with 
$T$-invariant holomorphic Poisson structures. 
\end{abstract}

\maketitle

%%%%%%%%%%%%%%%%%%%%%%%%%%%%%%%%%%%%%%%%%%%%%%%%%%%%%%%%%%%%%%%%%%%%%%%%
%%%%%%%%%%%%%%%%%%%%%%%%%%%%%%%%%%%%%%%%%%%%%%%%%%%%%%%%%%%%%%%%%%%%%%%%

\section{introduction}\label{sect-Intr}

This paper is a sequel of \cite{Hong 19}. It contains two main parts.
The first part of this paper is devoted to the study of holomorphic multi-vector fields on toric varieties.
In \cite{Demazure}, Demazure described all the holomorphic vector fields on smooth compact toric varieties. In this paper, we give a description of all holomorphic multi-vector fields on smooth compact toric varieties, which generalizes Demazure's results of holomorphic vector fields on toric varieties.

Recall that a toric variety \cite{Cox} is an irreducible variety $X$ such that
\begin{enumerate}
\item $T=(\C^*)^n$ is a Zariski open set of $X$, 
\item the action of $T=(\C^*)^n$ on itself extends to an action of $T=(\C^*)^n$ on $X$.
\end{enumerate}

Let $N=\Hom(\C^*,T)\cong\Z^n$ and $M=\Hom(T, \C^*)$.
Then $M\cong \Hom_\Z(N,\Z)$ and $N\cong \Hom_\Z(M,\Z)$.
Toric varieties can be described by the lattice $N\cong\Z^n$ and a fan $\Delta$ in $N_{\mathbb{R}}=N\otimes_\Z\mathbb{R}\cong\mathbb{R}^n$. A toric variety associated with $(\Delta, N)$ is denoted by  $X_\Delta$. One may consult \cite{Cox}, \cite{Fulton} and \cite{Oda} for more details of toric varieties. 

Let $X=X_ \Delta$ be a smooth compact toric variety associated with a fan $ \Delta$ in $N_\R$. The $T$-action on $X$ induces a $T$-action on 
$H^0(X, \wedge^k T_{X})$, the vector space of holomorphic $k$-vector fields on $X$. 
Denote by $V_I^k$ the weight space corresponding to the character $I\in M$. 
We have
\begin{equation}\label{VIk-eqn}
H^0(X, \wedge^kT_{X})=\bigoplus_{I\in M}V_I^k.
\end{equation}
Since $H^0(X, \wedge^kT_{X})$ is a finite dimensional vector space, 
there are only finite elements $I\in M$ such that $V_I^k\neq 0$.

The next theorem describes the holomorphic multi-vector fields on toric varieties.  The vector space $V_I^k(\Delta)$ and the set $S_k(\Delta), P_{\Delta}( n-i)$ appearing in Theorem \nameref{General-multiVect-thm} are defined in Section \ref{sect-multi}.

\begin{theorem*}[A]\label{General-multiVect-thm}
Let $X=X_ \Delta$ be a smooth compact toric variety associated with a fan $ \Delta$ in $N_\R$. 
\begin{enumerate}
\item  We have
\begin{equation}\label{General-multiVect-thm-eqn1}
H^0(X,\wedge^kT_{X})=\bigoplus_{I\in S_k(\Delta)}V_I^k(\Delta)
\end{equation}
for all $0\leq k\leq n$, where $V_I^k(\Delta)=V_{-I}^k$ for all $I\in S_k(\Delta)$.
\item We have
\begin{equation}\label{General-multiVect-thm-eqn2}
\dim H^0(X,\wedge^kT_{X})
=\sum_{0\leq i\leq k}~\sum_{F_j\in P_{\Delta}( n-i)}{{n-i}\choose{k-i}}\cdot\#(int(F_j)\cap M)
\end{equation}
for all $0\leq k\leq n$.
\end{enumerate}
\end{theorem*}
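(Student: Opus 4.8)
The plan is to reduce the global statement to a ray-by-ray local computation on the open torus $T\subset X$, to carry out that computation in linear-algebra terms, and finally to translate the resulting dimension count into the combinatorial language of the polytope
$Q=\{u\in M_\R:\langle u,v_\rho\rangle\ge -1\text{ for all }\rho\in\Delta(1)\}$,
whose faces are exactly the sets $F_j\in P_\Delta(n-i)$ of Section~\ref{sect-multi}. By GAGA for proper schemes over $\C$, holomorphic $k$-vector fields on $X$ are algebraic, hence determined by their restriction to $T$; and on $T$ the sheaf $\wedge^kT_T$ is trivialized by the invariant fields, so $H^0(T,\wedge^kT_T)\cong \C[M]\otimes_\C\wedge^kN_\C$. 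Thus a $k$-vector field on $X$ corresponds to a sum $\sum_m z^m\otimes\xi_m$, and by the weight decomposition \eqref{VIk-eqn} it suffices to determine, for each $m\in M$, which $\xi\in\wedge^kN_\C$ have the property that $z^m\otimes\xi$ extends to a regular section on all of $X$. Since $X$ is normal, $\wedge^kT_X$ is locally free (hence reflexive), and $X\setminus\big(\bigcup_{\rho\in\Delta(1)}U_\rho\big)$ is a union of orbits $O_\tau$ with $\dim\tau\ge 2$, hence of codimension $\ge 2$, a rational $k$-vector field extends over $X$ if and only if it extends over each $U_\rho$, $\rho\in\Delta(1)$.

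The key step is the local lemma at a single ray. Completing the primitive generator $v_\rho$ to a $\Z$-basis $e_1=v_\rho,e_2,\dots,e_n$ of $N$ gives coordinates $w_1,\dots,w_n$ on $U_\rho\cong\C\times(\C^*)^{n-1}$, with $w_1$ the coordinate vanishing on $D_\rho$; the invariant field corresponding to $e_j$ is $w_j\,\partial/\partial w_j$, and $z^m=\prod_j w_j^{\langle m,e_j\rangle}$ with $\langle m,e_1\rangle=\langle m,v_\rho\rangle$. Hence, writing $\xi=v_\rho\wedge\eta+\xi'$ with $\xi'$ not divisible by $v_\rho$, the element $z^m\otimes\xi$ is regular on $U_\rho$ precisely when either $\langle m,v_\rho\rangle\ge 0$, or $\langle m,v_\rho\rangle=-1$ and $\xi'=0$, i.e. $v_\rho\wedge\xi=0$. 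Intersecting this criterion over all $\rho\in\Delta(1)$: the $z^m$-component of $H^0(X,\wedge^kT_X)$ is nonzero exactly when $m\in Q$, and in that case it equals $\{\xi\in\wedge^kN_\C:\ v_\rho\wedge\xi=0\text{ for all }\rho\text{ with }\langle m,v_\rho\rangle=-1\}$. A short exterior-algebra argument identifies this subspace with $(\bigwedge^{d_m}W_m)\wedge\wedge^{k-d_m}N_\C$, where $W_m=\mathrm{span}_\C\{v_\rho:\langle m,v_\rho\rangle=-1\}$ and $d_m=\dim_\C W_m$, so its dimension is $\binom{n-d_m}{k-d_m}$ (vanishing when $d_m>k$). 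This yields part (1), with $S_k(\Delta)$ the set of weights so obtained and $V_I^k(\Delta)=V_{-I}^k$ as in Section~\ref{sect-multi}, and reduces part (2) to evaluating $\sum_{m\in Q\cap M}\binom{n-d_m}{k-d_m}$.

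It remains to repackage this sum combinatorially. Because $\Delta$ is complete, the $v_\rho$ positively span $N_\R$, so $Q$ is a bounded lattice polytope and the sum is finite. Each $m\in Q\cap M$ lies in the relative interior of a unique face $F$ of $Q$, whose active inequalities are exactly those indexed by the $\rho$ with $v_\rho\in W_m$, so $\dim F=n-d_m$; conversely every lattice point in the relative interior of an $(n-i)$-dimensional face $F_j\in P_\Delta(n-i)$ contributes the same value $\binom{n-i}{k-i}$. Grouping $Q\cap M$ by the dimension of the face containing it gives
$\dim H^0(X,\wedge^kT_X)=\sum_{0\le i\le k}\sum_{F_j\in P_\Delta(n-i)}\binom{n-i}{k-i}\cdot\#(\mathrm{int}(F_j)\cap M)$, which is \eqref{General-multiVect-thm-eqn2}. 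I expect the main technical obstacle to be the local lemma itself — getting the exponent shift (the ``$-1$'' and the divisibility-by-$v_\rho$ condition) exactly right under the change of coordinates, and then correctly combining the conditions imposed by several rays sharing a cone — together with the bookkeeping needed to verify that the stratification of $Q\cap M$ by $d_m$ coincides with the face stratification defining $P_\Delta(n-i)$.
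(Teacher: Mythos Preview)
Your argument is correct and follows the same overall strategy as the paper: decompose $H^0(X,\wedge^kT_X)$ into $T$-weight spaces, identify each weight space by a local extension criterion that reduces to the linear-algebra condition ``$v_\rho\wedge\xi=0$ for every ray $\rho$ with $\langle m,v_\rho\rangle=-1$'', and then count dimensions via the face stratification of the polytope $P_\Delta$. The one substantive difference is the choice of cover for the local check. The paper tests extension on the charts $U_\sigma\cong\C^n$ for maximal cones $\sigma\in\Delta(n)$, citing \cite{Hong 19} for the resulting description of $V_{-I}^k(U_\sigma)$ (Lemma~\ref{VIksigma-lem1}), and then needs the separate Proposition~\ref{NIk-lem} to show that intersecting these local conditions over all $\sigma$ gives exactly $N_I^k(\Delta)$. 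You instead test extension on the open sets $U_\rho\cong\C\times(\C^*)^{n-1}$ for rays $\rho\in\Delta(1)$, invoking reflexivity of the locally free sheaf $\wedge^kT_X$ on the smooth (hence normal) variety $X$ to ignore the codimension-$\ge 2$ complement; this makes the local computation a one-variable check and delivers the active-ray criterion directly, with no intersection step. Your route is a little more economical; the paper's route avoids the Hartogs/reflexivity reduction and recycles the $\C^n$ computation already carried out in the predecessor paper.
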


We give some comments for Theorem \nameref{General-multiVect-thm}:
\begin{itemize}
\item In the case of $k=1$, the above theorem was proved by Demazure \cite{Demazure}. 
The set $$R(N, \Delta)=S_1(\Delta)\backslash\{0\}$$ is called the root system for 
$(N, \Delta)$.
\item In the case of $k=n$, we have the well-known results
\begin{gather*}
H^0(X,\wedge^nT_{X})\cong
\bigoplus_{I\in P_{\Delta}\cap M}\C\chi^I\cdot   W^n,\\
\dim H^0(X,\wedge^nT_{X})=\#(P_{\Delta}\cap M).
\end{gather*}
\item The special case of $X=\CP^n$ was proved in \cite{Hong 19}. 
We would like to point out a sign mistake in the Theorem 3.3 of \cite{Hong 19}, where $V_I^k$ should be the weight space corresponding to the character $-I$. 
\item 
If $X$ is a toric Fano manifold, Equation \eqref{General-multiVect-thm-eqn2} can be obtained by Theorem $3.6$ in \cite{Materov 02}  
since $\wedge^k T_X\cong\Omega^{n-k}_X\otimes\wedge^n T_X$.
\end{itemize}

The second part of this paper is devoted to the study of holomorphic Poisson manifolds, especially, the computation of Poisson cohomology groups of $T$-invariant holomorphic Poisson structures on toric varieties.

Recall that a holomorphic Poisson manifold is a complex manifold $X$ equipped with a holomorphic bivector field $\pi$ such that $[\pi,\pi]=0$, where $[\cdot,\cdot]$ is the Schouten bracket. Holomorphic Poisson manifolds are studied by many mathematicians from different viewpoints.
The algebraic geometry of Poisson manifolds was first studied by Bondal \cite{Bondal} and Polishchuk \cite{Polishchuk}.  Deformation quantization of Poisson varieties was studied by Kontsevich \cite{Kontsevich 01}.
Hitchin \cite{Hitchin 06, Hitchin 11, Hitchin 12} and Gualtieri \cite{Gualtieri 11} investigated holomorphic Poisson manifolds as a special case of generalized complex manifolds. 
The relation of holomorphic Poisson manifolds and Lie algebroids were revealed in
 \cite{L-S-X 08}. 
And Poisson structures on flag varieties were studied in \cite{B-G-Y 06, G-Y 09}.

The Poisson cohomology groups $H^\bullet_\pi (X) $ of a holomorphic Poisson manifold  $(X,\pi)$ is the cohomology group of the complex of sheaves:
\begin{equation}\label{PoissonCoh-eqn}
\mathcal{O}_{X}\xrightarrow{d_{\pi}}T_{X}\xrightarrow{d_{\pi}}.....
\xrightarrow{d_{\pi}}\wedge^{i-1}T_{X}\xrightarrow{d_{\pi}}\wedge^{i}T_{X}
\xrightarrow{d_{\pi}}\wedge^{i+1}T_{X}\xrightarrow{d_{\pi}}......
\xrightarrow{d_{\pi}}\wedge^{n}T_{X},
\end{equation}
where $d_{\pi}=[\pi,\cdot]$ and $\dim X=n$.
The Poisson cohomology groups of holomorphic Poisson manifolds are computed in various situations \cite{Hong-Xu 11, Mayansky 15, C-F-P 16, Hong 19}.

In this paper, we compute the Poisson cohomology groups for holomorphic toric Poisson manifolds. 
Recall that a holomorphic toric Poisson manifold  \cite{Hong 19} is a smooth toric variety $X$, endowed with a $T$-invariant holomorphic Poisson structure $\pi$ on $X$, 
and $\pi$ is called a holomorphic toric Poisson structure on $X$.
The name "toric Poisson structures" comes from \cite{Caine}, where
$T$-invariant $(1,1)$-type Poisson structures on toric varieties are studied.
We also notice that $T$-invariant holomorphic Poisson structures on the products of flag varieties are explored in \cite{Lu-Mouquin 15}.

Based on the results in Theorem \nameref{General-multiVect-thm}, we get
the next theorem, which gives the Poisson cohomology groups of holomorpic toric Poisson manifolds. The sets $S_k^{\pi}(\Delta)$ and $S^\pi(\Delta, i)$ appearing in Theorem \nameref{General-cohomology-thm} are defined in Section \ref{sect-Pocoh}. 

\begin{theorem*}[B]\label{General-cohomology-thm}
Let $X=X_\Delta$ be a smooth compact toric variety of dimension $n$ and
let $\pi$ be a holomorphic toric Poisson structure on $X$.
Assume that 
$H^i(X,\wedge^jT_{X})=0$ for all $i>0$ and $0\leq j\leq n$.

\begin{enumerate}
\item 
For $0\leq k\leq n$, we have
\begin{equation*}
H_{\pi}^k(X)\cong\bigoplus_{I\in S_k^{\pi}(\Delta)}V_I^{k}(\Delta).
\end{equation*}
Moreover, 
\begin{equation*}
\dim H_{\pi}^k(X)=\sum_{i=0}^{k}{{n-i}\choose{k-i}}\# S^\pi(\Delta, i).
\end{equation*}
\item  
We have $H_{\pi}^k(X)=0$ for $k>n$.
\end{enumerate}
\end{theorem*}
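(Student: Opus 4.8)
The plan is to reduce everything to linear algebra on the $T$-weight spaces. Under the vanishing hypothesis $H^i(X,\wedge^jT_X)=0$ for $i>0$, the hypercohomology spectral sequence of the complex of sheaves \eqref{PoissonCoh-eqn}, with $E_1^{p,q}=H^q(X,\wedge^pT_X)$, has $E_1^{p,q}=0$ for $q>0$; hence it degenerates at $E_2$ and $H^k_\pi(X)$ is the $k$-th cohomology of the complex of global sections $\big(H^0(X,\wedge^\bullet T_X),\,d_\pi\big)$. In particular $H^k_\pi(X)=0$ for $k>n$ since $\wedge^kT_X=0$ there, which is part (2). Moreover, since $\pi$ is $T$-invariant, $d_\pi=[\pi,\cdot]$ is $T$-equivariant and therefore preserves the weight decomposition \eqref{General-multiVect-thm-eqn1}. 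So the global-sections complex splits as a finite direct sum over $I\in M$ of finite complexes of finite-dimensional vector spaces, and $H^k_\pi(X)=\bigoplus_{I}H^k\big(V^\bullet_I(\Delta),\,d_\pi\big)$; everything now happens one weight at a time.

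The heart of the proof is to identify $d_\pi$ on a fixed $V^\bullet_I(\Delta)$. From the proof of Theorem \nameref{General-multiVect-thm} one extracts that $V^k_I(\Delta)$ is spanned by toric multivector fields $\chi^I\bigwedge_{j\in J}\partial_j$, with $J$ running over the $k$-element subsets of a coordinate frame that contain a fixed subset $B_I$ of size $i=i(I)$ (the index attached to $I$ by \eqref{General-multiVect-thm-eqn2}); writing $w_I=\bigwedge_{j\in B_I}\partial_j$ and letting $U_I$ be the span of the remaining $\partial_j$, one has $V^\bullet_I(\Delta)=\chi^I\,w_I\wedge\wedge^{\bullet-i}(U_I)$. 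On the dense torus $\pi$ is a constant bivector and the $\partial_j$ are translation-invariant, so $[\pi,w_I\wedge\eta]=0$ and a one-line Schouten-bracket computation gives $d_\pi(\chi^I Q)=\chi^I\,(v_I\wedge Q)$, where $v_I:=\pi^\sharp(\alpha_I)$ is the fixed vector obtained by contracting $\pi$ with the invariant $1$-form $\alpha_I=d\log\chi^I$ (this holds on all of $X$ by density). Decomposing $v_I=v_I'+u_I$ with $v_I'\in\operatorname{span}(w_I)$ and $u_I\in U_I$, we get $v_I\wedge w_I\wedge\eta=\pm\,w_I\wedge(u_I\wedge\eta)$, so $\big(V^\bullet_I(\Delta),\,d_\pi\big)$ is isomorphic, up to a degree shift by $i$, to $\big(\wedge^\bullet U_I,\,u_I\wedge\cdot\big)$.

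Now invoke the elementary fact that exterior multiplication by a nonzero vector $u$ on $\wedge^\bullet U_I$ has trivial cohomology (contraction by any $\phi$ with $\phi(u)=1$ is a chain homotopy), whereas for $u=0$ the differential is zero. Hence: if $v_I\notin\operatorname{span}(w_I)$ the complex $V^\bullet_I(\Delta)$ is acyclic and contributes nothing; if $v_I\in\operatorname{span}(w_I)$, equivalently $v_I\wedge w_I=0$ — equivalently $\pi^\sharp(\alpha_I)$ lies along the face directions attached to $I$ — then $d_\pi\equiv 0$ on $V^\bullet_I(\Delta)$ and $H^k\big(V^\bullet_I(\Delta)\big)=V^k_I(\Delta)$. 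This condition on $I$ is exactly what cuts out the index sets $S^\pi_k(\Delta)$ and $S^\pi(\Delta,i)$ of Section \ref{sect-Pocoh}, giving $H^k_\pi(X)\cong\bigoplus_{I\in S^\pi_k(\Delta)}V^k_I(\Delta)$; substituting $\dim V^k_I(\Delta)=\binom{n-i(I)}{k-i(I)}$ from Theorem \nameref{General-multiVect-thm} and collecting terms according to $i=i(I)$ yields $\dim H^k_\pi(X)=\sum_{i=0}^{k}\binom{n-i}{k-i}\#S^\pi(\Delta,i)$.

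The \emph{main obstacle} is the middle paragraph: one must read off from the proof of Theorem \nameref{General-multiVect-thm} the precise module structure $w_I\wedge\wedge^\bullet(U_I)$ of each weight space, with $w_I$ and $U_I$ depending only on $I$ and not on $k$, then check — signs included, and globally on $X$ rather than only on the dense torus — that $d_\pi$ really is exterior multiplication by $\pi^\sharp(\alpha_I)$, and finally confirm that the resulting vanishing condition agrees verbatim with the definitions of $S^\pi_k(\Delta)$ and $S^\pi(\Delta,i)$. Granted that, the spectral-sequence reduction and the Koszul-type dichotomy are routine.
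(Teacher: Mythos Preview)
Your proposal is correct and follows essentially the same approach as the paper: reduce to global sections via the vanishing hypothesis (the paper's Lemma~\ref{LSX-lem}), split by $T$-weight, identify $d_\pi$ on each $V_I^\bullet(\Delta)$ as left wedge by $\rho(\imath_I\Pi)$ (the paper's Lemmas~\ref{Poisson-Coh-lem1} and~\ref{Poisson-Coh-lem3}), and then run the dichotomy according to whether $(\imath_I\Pi)\wedge\mathcal{E}_I(\Delta)$ vanishes. The only cosmetic difference is that where the paper proves the acyclic case by an explicit basis decomposition (Lemma~\ref{Poisson-Coh-lem4}), you invoke the standard Koszul-complex fact that $(\wedge^\bullet U,\,u\wedge\cdot)$ is contractible for $u\neq 0$; this is the same argument in slicker packaging.
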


Theorem \nameref{General-cohomology-thm} generalizes the results in \cite{Hong 19}, 
where we compute the Poisson cohomology groups for toric Poisson structures on $\CP^n$ and $\C^n$.

As an application of Theorem  \nameref{General-cohomology-thm}, we have
\begin{corollary*}[C]\label{Fano-Pcohomology}
Let $X=X_{\Delta}$ be a toric Fano manifolds and $\pi$ be a holomorphic toric Poisson structure on $X$. Then the Poisson cohomology groups $H_{\pi}^\bullet(X)$ 
is given by Theorem  \nameref{General-cohomology-thm}.
\end{corollary*}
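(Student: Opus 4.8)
The plan is to verify that every smooth toric Fano manifold satisfies the cohomological hypothesis of Theorem~\nameref{General-cohomology-thm}, namely $H^i(X,\wedge^jT_X)=0$ for all $i>0$ and $0\leq j\leq n$; once this is done, Corollary~\nameref{Fano-Pcohomology} follows immediately by applying Theorem~\nameref{General-cohomology-thm}.

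The first step is to rewrite the groups $H^i(X,\wedge^jT_X)$ in terms of twisted sheaves of differential forms. As already used in the comments after Theorem~\nameref{General-multiVect-thm}, for a smooth variety $X$ of dimension $n$ the contraction pairing $\wedge^jT_X\otimes\wedge^{n-j}T_X\to\wedge^nT_X$ is perfect, hence
\begin{equation*}
\wedge^jT_X\;\cong\;\Omega^{n-j}_X\otimes\wedge^nT_X\;\cong\;\Omega^{n-j}_X\otimes\mathcal{O}_X(-K_X),
\end{equation*}
where $\mathcal{O}_X(-K_X)=\wedge^nT_X$ is the anticanonical bundle. Thus $H^i(X,\wedge^jT_X)\cong H^i\bigl(X,\Omega^{n-j}_X\otimes\mathcal{O}_X(-K_X)\bigr)$, and as $j$ runs over $0,\dots,n$ the exponent $p=n-j$ runs over $0,\dots,n$ as well.

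The second step is to invoke the Bott--Steenbrink--Danilov vanishing theorem for toric varieties: on a smooth complete toric variety $X$ one has $H^i(X,\Omega^p_X\otimes L)=0$ for every $i>0$ and every $p\geq 0$ whenever $L$ is an ample line bundle (for $p=0$ this is also a special case of Demazure/Kodaira vanishing). A Fano manifold is projective, and $\mathcal{O}_X(-K_X)$ is ample by definition, so taking $L=\mathcal{O}_X(-K_X)$ gives $H^i\bigl(X,\Omega^p_X\otimes\mathcal{O}_X(-K_X)\bigr)=0$ for all $i>0$ and $0\leq p\leq n$. Combined with the isomorphism of the previous paragraph, this yields $H^i(X,\wedge^jT_X)=0$ for all $i>0$ and $0\leq j\leq n$, which is precisely the hypothesis of Theorem~\nameref{General-cohomology-thm}; that theorem then provides the stated description of $H^\bullet_\pi(X)$ and of its dimensions.

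There is no serious obstacle here: the whole content of the corollary is the toric Bott vanishing theorem. The only points requiring care are recording the duality isomorphism $\wedge^jT_X\cong\Omega^{n-j}_X\otimes\mathcal{O}_X(-K_X)$ correctly, so that \emph{all} degrees $p$ from $0$ to $n$ occur and not merely $p>0$, and citing the vanishing statement in sufficient generality — for an ample twist and for every $p$, including the $p=0$ case $H^i(X,\mathcal{O}_X(-K_X))=0$. One could alternatively extract the dimension formula from Materov's computation cited after Theorem~\nameref{General-multiVect-thm} via $\wedge^kT_X\cong\Omega^{n-k}_X\otimes\wedge^nT_X$, but passing through Theorem~\nameref{General-cohomology-thm} has the advantage of yielding the cohomology groups themselves, not just their dimensions.
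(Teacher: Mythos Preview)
Your proposal is correct and follows essentially the same route as the paper: rewrite $\wedge^jT_X\cong\Omega^{n-j}_X\otimes\wedge^nT_X$, note that $\wedge^nT_X$ is ample because $X$ is Fano, and then invoke Bott-type vanishing for $\Omega^p_X$ twisted by an ample line bundle on a smooth complete toric variety (the paper records this as Lemma~\ref{Materov-lem}, citing Materov) to verify the hypothesis of Theorem~\nameref{General-cohomology-thm}. The only difference is cosmetic---you attribute the vanishing to Bott--Steenbrink--Danilov and add some commentary on the $p=0$ case, whereas the paper simply cites Materov's statement in one line.
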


There are many interesting questions for further study. For example, we can study the Gerstenhaber algebra structures of holomorphic multi-vector fields on toric varieties,
and the Gerstenhaber algebra structures of the Poisson cohomology groups of holomorphic toric Poisson manifolds.
The modular class of holomorphic Poisson manifolds were studied in \cite{Brylinski-Zuckerman 99, Dulgushev 09}. A natural question is: when is a holomorphic toric Poisson manifold unimodular? However, those will be left for future works.

The paper is organized in the following way.
In Section \ref{sect-Pre}, we recall some necessary results on toric varieties and holomorphic Poisson manifolds. 
In Section \ref{sect-multi}, we first introduce some terminologies and study their properties, then we prove Theorem \nameref{General-multiVect-thm}. 
In Section \ref{sect-Pocoh}, we prove Theorem \nameref{General-cohomology-thm} and Corollary \nameref{Fano-Pcohomology}.

{\bf Acknowledgements} 
We would like to thank Yu Qiao, Xiang Tang and Ping Xu for helpful discussions and comments.

\section{Preliminary}\label{sect-Pre}
\subsection{Toric varieties}
Let $X=X_\Delta$ be a smooth compact toric variety associated with a fan $\Delta$ in $N_\R\cong\R^n$. Recall that $X_\Delta$ is smooth if and only if each cone 
$\sigma\subset\Delta$ is smooth, \i.e., $\sigma$ is generated by a subset of a basis of $N$. And $X_{\Delta}$ is compact if and only if $$|\Delta|=\bigcup_{\sigma\in\Delta}\sigma=N_{\R}.$$
Denote by $\Delta(k)$ $(0\leq k\leq n)$ the set of all $k$-dimensional cones in $\Delta$.
Let $U_{\sigma}\subseteq X_{\Delta}$ be the affine variety associated with a cone 
$\sigma\in\Delta(n)$. Then we have $X=\bigcup_{\sigma\in\Delta(n)} U_{\sigma}$.
As $X$ is smooth, we have $U_\sigma\cong\C^n$ for all $\sigma\in\Delta(n)$.
Moreover, we have $T\subset U_\sigma\subset X$ for all $\sigma\in\Delta(n)$.

Each element $m$ in $M$ gives rise to a character $\chi^m\in Hom(T,\C^*)$,
which can also be considered as a rational function on $X$.

Let $N_\C=N\otimes_{\Z}\C$. 
Since $T\cong N\otimes_\Z\C^*$, we have $\Lie(T)\cong N_\C$, 
where $\Lie(T)$ denotes the Lie algebra of $T$. 
We define a map $\rho:N_\C\rightarrow \mathfrak{X}(X)$ by
\begin{equation} \label{rho-def-eqn}
\rho:N_\C=N\otimes_\Z\C\cong \Lie(T)\rightarrow \mathfrak{X}(X),
\end{equation}
where $\Lie(T)\rightarrow \mathfrak{X}(X)$ is defined by the infinitesimal action of the Lie algebra $\Lie(T)$ on $X$.
Since the action map $T\times X\rightarrow X$ is holomorphic, 
the images of $\rho$ are holomorphic vector fields on $X$.
By abuse of notation, the induced maps
\begin{equation} \label{rho-def-eqn2}
\wedge^k N_\C\rightarrow\mathfrak{X}^k(X)
\end{equation}
 are also denoted by $\rho$ for $2\leq k\leq n$.
 
 Let 
 \begin{equation}\label{rho-def-eqn3}
 W=\rho(N_\C),\quad
W^k=\wedge^k  W\quad\text{and}\quad W^0=\C. 
\end{equation}
Then $W^k$ is a subspace of $H^0(X, \wedge^kT_{X})$. 
And the map $\wedge^k N_\C\xrightarrow{\rho} W^k$ is an isomorphism.

\begin{lemma}\label{VIk-lem}
Let $X$ be a smooth toric variety.
Let $V_I^k$ be the weight space corresponding to the character $I\in M$ for the 
$T$-action on $H^0(X,\wedge^k T_X)$.
\begin{enumerate}
\item We have $V_0^k=W^k$, where $V_0^k$ is the vector space consisting of all $T$-invariant holomorphic $k$-vector fields on $X$.
\item
For any holomorphic $k$-vector field $v\in V_{-I}^k$, there exists a unique 
 $k$-vector field $w\in W^k$, such that
\begin{equation}\label{VIKlem-eqn}
v|_T=\chi^I\cdot w|_T,
\end{equation}
where $v|_T$ and $\chi^I\cdot w|_T$ are the restrictions of $v$ and $\chi^I\cdot w$ on $T\subseteq X$.
\end{enumerate}
\end{lemma}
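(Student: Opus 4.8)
The plan is to exploit the $T$-action and the decomposition \eqref{VIk-eqn} to reduce everything to computations on the open torus $T \subseteq X$, where holomorphic multi-vector fields are easy to describe explicitly.

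For part (1), I would argue as follows. The subspace $W^k = \wedge^k \rho(N_\C)$ consists of holomorphic $k$-vector fields that are built from the fundamental vector fields of the $T$-action; since $T$ is abelian, each $\rho(\xi)$ for $\xi \in N_\C$ is a left-invariant (equivalently, $T$-invariant) vector field on $T$, and these extend to global $T$-invariant vector fields on $X$ by construction of $\rho$ in \eqref{rho-def-eqn}. Hence every element of $W^k$ is $T$-invariant, giving $W^k \subseteq V_0^k$. For the reverse inclusion, suppose $v \in V_0^k$ is a $T$-invariant holomorphic $k$-vector field. Restrict $v$ to the open dense torus $T \subseteq X$. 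On $T \cong (\C^*)^n$, the module of holomorphic $k$-vector fields is free over $\mathcal{O}(T)$ with basis the wedge products of the coordinate vector fields $z_i \partial/\partial z_i$, which are exactly a basis of $\rho(\wedge^k N_\C)|_T$; writing $v|_T = \sum_J f_J \, \rho(e_J)|_T$ with $f_J \in \mathcal{O}(T)$ and imposing $T$-invariance forces each $f_J$ to be $T$-invariant, hence constant, so $v|_T \in W^k|_T$. Since $T$ is dense in $X$ and both $v$ and the corresponding element of $W^k$ are holomorphic on all of $X$, they agree everywhere, so $v \in W^k$. This proves $V_0^k = W^k$.

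For part (2), let $v \in V_{-I}^k$. By definition of the weight space, $t \cdot v = \chi^{-I}(t)\, v$ for all $t \in T$ — more precisely, with the convention fixed in the excerpt, the restriction $v|_T$ transforms under the $T$-action with weight $-I$ (this is the sign convention the remark after Theorem~A flags). Consider the $k$-vector field $w := \chi^{-I} \cdot v$, a priori only defined as a meromorphic object; but restricted to $T$, the function $\chi^{-I}$ is a nowhere-vanishing holomorphic function, so $w|_T := \chi^{-I}|_T \cdot v|_T$ is a genuine holomorphic $k$-vector field on $T$. A direct check shows $w|_T$ is $T$-invariant: applying $t \in T$ multiplies $v|_T$ by $\chi^{-I}(t)$ and multiplies $\chi^{-I}|_T$ (regarded as a function, via its transformation under translation) by $\chi^{-I}(t)^{-1}$, so the product is invariant. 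By part (1) applied on $T$ (or rather, by the same free-module argument as above restricted to $T$), $w|_T$ is the restriction of a unique element $w \in W^k$, and then $v|_T = \chi^I \cdot w|_T$, which is \eqref{VIKlem-eqn}. Uniqueness of $w$ follows because $W^k \to W^k|_T$ is injective ($T$ dense, elements holomorphic) and the expression $v|_T = \chi^I w|_T$ determines $w|_T$ uniquely since $\chi^I$ is invertible on $T$.

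The main obstacle I anticipate is bookkeeping the sign/duality convention correctly: one must be careful about whether the $T$-action on $H^0(X,\wedge^k T_X)$ is by $\chi^I$ or $\chi^{-I}$ on $V_I^k$, and about how $\chi^I$ transforms under translation versus under the weight action — the excerpt itself warns of a sign mistake in \cite{Hong 19}, so matching \eqref{VIKlem-eqn} exactly requires pinning down these conventions. The rest is the standard fact that $\mathcal{O}(T)$-linear combinations of the invariant frame $\rho(e_J)$ exhaust holomorphic multi-vector fields on the torus, together with the density of $T$ in $X$ to promote identities on $T$ to identities on $X$.
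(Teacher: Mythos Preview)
Your proposal is correct and follows essentially the same approach as the paper: both directions of part (1) are argued via $T$-invariance of $W^k$ and restriction to the dense torus, and part (2) is handled by multiplying $v|_T$ by $\chi^{-I}$, checking $T$-invariance directly, and invoking the description of $T$-invariant multi-vector fields on $T$. Your version is in fact slightly more explicit than the paper's (you spell out the free-module frame argument on $T$ and supply the uniqueness argument, which the paper's proof omits).
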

\begin{proof}
\begin{enumerate}
\item
By Equation \eqref{rho-def-eqn3}, we have
\begin{equation}\label{VIk-lem-eqn1}
W\cong \Lie(T)\cong N_\C\quad\text{and}\quad W^k\cong\wedge^k \Lie(T)\cong\wedge^k N_\C,
\end{equation}
where $\wedge^k \Lie(T)$ can be considered as the vector space of the $T$-invariant vector fields on $T$.
 
For any holomorphic $T$-invariant $k$-vector field $v\in V_0^k$, the restriction of $v$ 
on $T\subseteq X$ is also $T$-invariant. 
Thus there exists a holomorphic $k$-vector field $w\in W^k$, such that  
\begin{equation}\label{VIk-lem-eqn2}
v|_{T}=w|_{T}.
\end{equation}
Since $v$ and $w$ are holomorphic on $X$, and $T$ is dense in $X$, 
by Equation \eqref{VIk-lem-eqn2}, 
we have $$v=w$$ on $X$. 
Hence we have $V_0^k\subseteq W$.

On the other hand, any $w\in W^k$ is a $T$-invariant holomorphic $k$-vector field 
on $X$. Hence we have $W^k\subseteq V_0^k$.

By the arguments above, we have $V_0^k=W^k$ .

\item 
For any holomorphic $k$-vector field $v\in V_{-I}^k$, we have 
\begin{equation}\label{VIklem-eqn1}
 t_{*}(v)=\chi^{-I}(t)\cdot v
\end{equation}
for all $t\in T$, where $t_{*}(v)$ is the induced action of $t\in T$ 
on $v\in H^0(X,\wedge^k T_X)$, 
and $\chi^{-I}(t)$ is the value of $\chi^{-I}$ at $t\in T$. 

We will prove that $\chi^{-I}\cdot v|_T$ is a $T$-invariant holomorphic $k$-vector field 
on $T$. 
For any $t\in T$ and $p\in T\subseteq X$, we have
\begin{equation}\label{VIklem-eqn2}
(t_{*}(\chi^{-I}\cdot v))(p)=\chi^{-I}(t^{-1}\cdot p)\cdot (t_{*}(v))(p).
\end{equation}
By Equation \eqref{VIklem-eqn1}, we have
\begin{equation*}
( t_{*}(v))(p)=\chi^{-I}(t)\cdot v(p).
\end{equation*}
Since 
\begin{equation*}
\chi^{-I}(t^{-1}\cdot p)=\chi^{-I}(t^{-1})\chi^{-I}(p)=\chi^{I}(t)\chi^{-I}(p)
\end{equation*}
by Equation \eqref{VIklem-eqn2}, we have
$$t_{*}(\chi^{-I}\cdot v)(p)=\chi^{-I}(p)\cdot v(p)=(\chi^{-I}\cdot v)(p)$$
for all $t\in T$ and $p\in T\subseteq X$.
Therefore $\chi^{-I}\cdot v|_T$ is a $T$-invariant holomorphic $k$-vector field on $T$.
As a consequence, there exists $w\in W^k$ such that 
$$w|_T=\chi^{-I}\cdot v|_T,$$
or equivalently, $$v|_T=\chi^I\cdot w|_T.$$
\end{enumerate}
\end{proof}

By Lemma \ref{VIk-lem}, any holomorphic $k$-vector fields $v\in V_{-I}^k$ can be written as $v=\chi^I\cdot w$ on $T\subseteq X$, where $I\in M$ and $w\in W^k$.
 In general, $v=\chi^I\cdot w$ is a meromorphic $k$-vector field on $X$. 
 By abuse of notations, if $v=\chi^I\cdot w$ has moveable singularity on $X$, 
 we also use $v=\chi^I\cdot w$ to represent the corresponding holomorphic $k$-vector field on $X$,  and we say that $v=\chi^I\cdot w$ is holomorphic on $X$ in this paper.
 
 We have the following lemma.
\begin{lemma}\label{VIk-lem1}
Let $X$ be a smooth toric variety.  Let $v=\chi^I\cdot w$ be a $k$-vector field on $X$, where $I\in M$ and $w\in W^k$. Then $v=\chi^I\cdot w\in V_{-I}^k$ 
if and only if $v=\chi^I\cdot w$ is holomorphic on $X$.
\end{lemma}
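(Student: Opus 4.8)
The plan is to prove both implications by a local computation on the affine charts $U_\sigma \cong \C^n$ for $\sigma \in \Delta(n)$, using the density of $T$ in $X$ and the fact that $X = \bigcup_{\sigma \in \Delta(n)} U_\sigma$. First I would observe that the ``only if'' direction is essentially immediate: if $v = \chi^I \cdot w$ is by hypothesis an element of $V_{-I}^k \subseteq H^0(X, \wedge^k T_X)$, then it is by definition a holomorphic $k$-vector field on $X$; the real content is the converse, and along the way one should check that the two notions of ``$v \in V_{-I}^k$'' (weight-space membership versus ``$v|_T = \chi^I w|_T$ extends holomorphically'') are consistent, which is exactly what Lemma \ref{VIk-lem}(2) and the convention introduced just before this lemma are set up to provide.

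For the substantive ``if'' direction, suppose $v = \chi^I \cdot w$ is holomorphic on $X$, meaning the meromorphic $k$-vector field $\chi^I \cdot w$ (defined a priori on $T$, where $w \in W^k$ is $T$-invariant) extends to a global section of $\wedge^k T_X$. I would then show this extension lies in the weight space $V_{-I}^k$ by computing the $T$-action directly on $T$ and invoking density. Concretely, for $t \in T$ and $p \in T$, using that $w$ is $T$-invariant (so $t_*(w)(p) = w(p)$) and that $\chi^I(t^{-1}p) = \chi^I(t^{-1})\chi^I(p) = \chi^{-I}(t)\chi^I(p)$, one gets
\[
\bigl(t_*(\chi^I \cdot w)\bigr)(p) = \chi^I(t^{-1}p)\, t_*(w)(p) = \chi^{-I}(t)\, \chi^I(p)\, w(p) = \chi^{-I}(t)\,(\chi^I \cdot w)(p),
\]
so $t_*(v)|_T = \chi^{-I}(t)\, v|_T$ for all $t$. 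Since $v$ and $t_*(v)$ are holomorphic on all of $X$ and agree with $\chi^{-I}(t) v$ on the dense open set $T$, they agree on $X$; hence $t_*(v) = \chi^{-I}(t) v$ globally, which says precisely $v \in V_{-I}^k$. This is basically the same manipulation already carried out in the proof of Lemma \ref{VIk-lem}(2), just run in the opposite logical direction, so it should go through cleanly.

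The main obstacle I anticipate is not in this $T$-action bookkeeping but in making the phrase ``$\chi^I \cdot w$ is holomorphic on $X$'' precise and showing it is well-defined independently of the chart — i.e.\ justifying the ``abuse of notation'' paragraph preceding the lemma. One must argue that if the meromorphic extension $\chi^I \cdot w$ has no genuine (non-removable) poles, then removing the removable singularities produces a \emph{single} global holomorphic section, and that this is compatible across the overlaps $U_\sigma \cap U_{\tau}$; this again follows from density of $T$ and the identity theorem for holomorphic sections, since any two such local extensions agree on $T$. Once that is settled, the equivalence reduces to the elementary weight computation above. I would therefore structure the write-up as: (i) recall the convention and note well-definedness of the holomorphic extension via density; (ii) ``only if'' by definition; (iii) ``if'' via the displayed $T$-action computation plus a density/identity-theorem argument to pass from $T$ to $X$.
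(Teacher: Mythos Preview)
Your proposal is correct and follows essentially the same argument as the paper: the ``only if'' direction is immediate since $V_{-I}^k \subseteq H^0(X,\wedge^k T_X)$, and for the ``if'' direction the paper performs exactly your $T$-action computation $t_*(v)|_T = \chi^{-I}(t)\, v|_T$ (citing the analogous step in Lemma~\ref{VIk-lem}) and then invokes density of $T$ in $X$ to conclude $t_*(v) = \chi^{-I}(t)\, v$ globally. Your additional care about well-definedness of the holomorphic extension across charts is a reasonable elaboration of the paper's ``abuse of notation'' convention, but is not needed beyond what the paper already stipulates.
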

\begin{proof}
\begin{enumerate}
\item
$``\Longleftarrow"$ \quad For any $w\in W^k$ and $v=\chi^I\cdot w$, 
suppose that $v$ is holomorphic on $X$. We will show that $v\in V_{-I}^k$.

By the similar proof as in Lemma \ref{VIk-lem}, we have
\begin{equation}\label{VIklem-eqn3}
t_*(v)|_T=t_*(\chi^I\cdot w)|_T=\chi^{-I}(t)(\chi^I\cdot w)|_T=\chi^{-I}(t)\cdot v|_T
\end{equation}
for all $t\in T$.
If $v=\chi^I\cdot w$ is holomorphic on $X$, 
then $t_*(v)$ and $\chi^{-I}(t)\cdot v$ are both holomorphic on $X$.
Since $T$ is dense in $X$,  the Equation \eqref{VIklem-eqn3} implies that
\begin{equation}
t_*(v)=\chi^{-I}(t)\cdot v
\end{equation}
for all $t\in T$. Hence we have $v\in V_{-I}^k$.

\item
$``\Longrightarrow"$ \quad If 
$v=\chi^I\cdot w\in V_{-I}^k\subseteq H^0(X,\wedge^k T_X)$, 
then $v$ is necessarily holomorphic on $X$.
\end{enumerate}
\end{proof}

\subsection{Poisson cohomology groups and holomorphic toric Poisson manifolds}
Let $(X,\pi)$ be a holomorphic Poisson manifolds.  The Poisson cohomology groups of 
$(X,\pi)$ are defined in Equation \eqref{PoissonCoh-eqn}. 
In general, the Poisson cohomology groups are difficult to compute.
However, the following lemmas give a way to compute the Poisson cohomology groups
of some holomorphic Poisson manifolds. 

\begin{lemma}\cite{L-S-X 08}
  The Poisson cohomology of a holomorphic
  Poisson manifold $(X,\pi)$ is isomorphic to the  total cohomology of the double complex \\
$$\begin{array}{ccccccc}
......& &......& &......& & \\
d_{\pi}\big\uparrow & & d_{\pi}\big\uparrow & & d_{\pi}\big\uparrow & &  \\
\Omega^{0,0}(X,T^{2,0}X) & \xrightarrow{\bar{\partial}} &
 \Omega^{0,1}(X, T^{2,0}X) & \xrightarrow{\bar{\partial}} &
 \Omega^{0,2}(X, T^{2,0}X) &\xrightarrow{\bar{\partial}}   &
 ......\\
 d_{\pi}\big\uparrow & & d_{\pi}\big\uparrow & & d_{\pi}\big\uparrow & &  \\
\Omega^{0,0}(X,T^{1,0}X) & \xrightarrow{\bar{\partial}} &
 \Omega^{0,1}(X, T^{1,0}X) & \xrightarrow{\bar{\partial}} &
 \Omega^{0,2}(X,T^{1,0}X) &\xrightarrow{\bar{\partial}}   &
 ......\\
 d_{\pi}\big\uparrow & & d_{\pi}\big\uparrow & & d_{\pi}\big\uparrow & &  \\
\Omega^{0,0}(X, T^{0,0}X) & \xrightarrow{\bar{\partial}} &
\Omega^{0,1}(X, T^{0,0}X) & \xrightarrow{\bar{\partial}} & \Omega^{0,2}(X, T^{0,0}X) & \xrightarrow{\bar{\partial}} &
 ......\\
\end{array}$$
\end{lemma}

By spectral sequence, we get the following lemma.
\begin{lemma}\label{LSX-lem}\cite{Hong-Xu 11}
Let $(X,\pi)$ be a holomorphic Poisson manifold. If all the higher
cohomology groups $H^{i}(X,\wedge^{j}T_{X})$ vanish for $i>0$, then the
Poisson cohomology  $H^\bullet_\pi (X)$
is isomorphic to the cohomology of the complex
\begin{equation}
0\rightarrow H^{0}(X,\mathcal{O}_{X})\xrightarrow {d_{\pi}}
H^{0}(X,T_{X})\xrightarrow{d_{\pi}}
H^{0}(X,\wedge^{2}T_{X})\xrightarrow{d_{\pi}}
\ldots \xrightarrow{d_{\pi}}H^{0}(X,\wedge^{n}T_{X})\rightarrow 0,
\end{equation}
where $d_{\pi}=[\pi,\cdot]$.
\end{lemma}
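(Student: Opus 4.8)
The final statement is Lemma~\ref{LSX-lem}, which asserts that when all higher sheaf cohomology groups $H^i(X,\wedge^j T_X)$ vanish for $i>0$, the Poisson cohomology reduces to the cohomology of the complex of global multivector fields with differential $d_\pi=[\pi,\cdot]$. My plan is to derive this as a corollary of the preceding lemma of \cite{L-S-X 08}, which identifies the Poisson cohomology with the total cohomology of the Dolbeault-type double complex $\Omega^{0,q}(X,\wedge^p T_X)$ with horizontal differential $\bar\partial$ and vertical differential $d_\pi$. The natural tool is the spectral sequence of this double complex, filtered so that the first page is computed by taking $\bar\partial$-cohomology.

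First I would set up the spectral sequence $E_r$ associated with the double complex, taking the filtration for which $E_1^{p,q} = H^q_{\bar\partial}(X,\wedge^p T_X)$, the Dolbeault cohomology of the holomorphic bundle $\wedge^p T_X$. By the Dolbeault theorem, $E_1^{p,q}\cong H^q(X,\wedge^p T_X)$, the sheaf cohomology. The hypothesis that $H^i(X,\wedge^j T_X)=0$ for all $i>0$ then forces $E_1^{p,q}=0$ whenever $q>0$, so the first page is concentrated in the single row $q=0$, where $E_1^{p,0}=H^0(X,\wedge^p T_X)$. Next I would observe that the induced differential $d_1$ on this row is precisely the map induced by $d_\pi=[\pi,\cdot]$ on global sections, since the vertical differential of the double complex is $d_\pi$ and it descends to $\bar\partial$-cohomology; one checks this is well-defined because $d_\pi$ commutes with $\bar\partial$ (equivalently $\pi$ is holomorphic and $[\pi,\pi]=0$, so $d_\pi$ is a holomorphic bidifferential operator commuting with $\bar\partial$).

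Since the $E_1$ page is concentrated in a single row, the spectral sequence degenerates at $E_2$: all higher differentials $d_r$ for $r\geq 2$ shift the $q$-grading and hence vanish on a one-row page, so $E_2 = E_\infty$. Therefore the total cohomology in degree $k$ is $E_2^{k,0}$, which is exactly the $k$-th cohomology of the complex
\begin{equation*}
0\to H^0(X,\mathcal{O}_X)\xrightarrow{d_\pi} H^0(X,T_X)\xrightarrow{d_\pi}\cdots\xrightarrow{d_\pi} H^0(X,\wedge^n T_X)\to 0.
\end{equation*}
Combining with the isomorphism between Poisson cohomology and the total cohomology of the double complex from the previous lemma gives the claim.

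The main obstacle, and the only point requiring genuine care, is verifying that the differential on the surviving row of the spectral sequence coincides with $[\pi,\cdot]$ acting on honest global holomorphic multivector fields — that is, tracking through the definition of the induced differential $d_1$ in the spectral sequence and confirming it is not merely some abstract connecting map but literally the Schouten bracket with $\pi$ restricted to $\bar\partial$-closed forms of type $(0,0)$. This is straightforward but must be stated: a class in $E_1^{p,0}$ is represented by a $\bar\partial$-closed element of $\Omega^{0,0}(X,\wedge^p T_X)$, i.e.\ a global holomorphic $p$-vector field, and applying the vertical differential $d_\pi$ lands in $\Omega^{0,0}(X,\wedge^{p+1}T_X)$, still $\bar\partial$-closed, so $d_1$ is just $d_\pi$ on global sections. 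Everything else — the degeneration, the identification of $E_1$ via Dolbeault, and the vanishing hypothesis killing the off-row terms — is formal.
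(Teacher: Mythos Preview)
Your proposal is correct and follows precisely the route indicated by the paper: the paper does not give a detailed proof but simply writes ``By spectral sequence, we get the following lemma'' and cites \cite{Hong-Xu 11}, and your argument is exactly the standard spectral sequence computation this remark points to. Your care in identifying the $d_1$ differential with $d_\pi$ on global holomorphic sections is appropriate and makes explicit what the paper leaves implicit.
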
  

Let $X=X_\Delta$ be a smooth toric variety associated with a fan $\Delta$ in $N_\R$.
It is natural to study the $T$-invariant holomorphic Poisson structures, \i.e., 
the holomorphic toric Poisson structures on $X$. 
The following proposition gives a description of all holomorphic toric Poisson structures 
on $X$.

\begin{proposition}\cite{Hong 19}\label{ToricPoisson-prop}
Let $X=X_\Delta$ be a smooth toric variety associated with a fan $\Delta$ in $N_\R$. 
Suppose that $\{e_1, e_2,\ldots, e_n\}$ is a basis of $N\subset N_\C$. 
Let $v_i=\rho(e_i)$ $(1\leq i\leq n)$  be holomorphic vector fields on $X$, 
where $\rho:N_\C\rightarrow \mathfrak{X}(X)$ is defined in  Equation \eqref{rho-def-eqn}.
Then $\pi$ is a holomorphic toric Poisson structure 
on $X$ if and only if $\pi$ can be written as
\begin{equation} \label{TP-prop-eqn}
\pi=\sum_{1\leq i<j\leq n}a_{ij} v_i\wedge v_j ,
\end{equation}
where $a_{ij}$ $(1\leq i<j\leq n)$ are complex constants.
\end{proposition}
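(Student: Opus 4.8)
The plan is to reduce the statement to a computation about $T$-invariant holomorphic bivector fields and the vanishing of the Schouten bracket. First I would observe that by definition a holomorphic toric Poisson structure $\pi$ is a $T$-invariant holomorphic bivector field on $X$ with $[\pi,\pi]=0$. By Lemma \ref{VIk-lem}(1), the space of $T$-invariant holomorphic $2$-vector fields on $X$ is exactly $V_0^2 = W^2 = \wedge^2 W$, and the isomorphism $\wedge^2 N_\C \xrightarrow{\rho} W^2$ tells us that every such bivector field can be written uniquely as $\rho(\xi)$ for some $\xi \in \wedge^2 N_\C$. Choosing the basis $\{e_1,\dots,e_n\}$ of $N$, the elements $\{e_i \wedge e_j\}_{1\le i<j\le n}$ form a basis of $\wedge^2 N_\C$, so $\xi = \sum_{i<j} a_{ij}\, e_i\wedge e_j$ with $a_{ij}\in\C$, and applying $\rho$ gives
\begin{equation*}
\pi = \sum_{1\le i<j\le n} a_{ij}\, v_i\wedge v_j,
\end{equation*}
with $v_i=\rho(e_i)$. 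This already establishes that every holomorphic toric Poisson structure has the asserted form, once we know it is genuinely Poisson — but that is immediate, since $T$-invariance is built into $W^2$ and we still must check $[\pi,\pi]=0$ for the converse direction anyway.

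For the converse, I would take an arbitrary $\pi$ of the form \eqref{TP-prop-eqn} and verify two things: that it is holomorphic (clear, since each $v_i\in W\subseteq H^0(X,T_X)$ and wedges of global holomorphic vector fields are global holomorphic multivector fields, so $\pi\in W^2\subseteq H^0(X,\wedge^2 T_X)$), and that $[\pi,\pi]=0$. The key point here is that $W = \rho(N_\C)$ is the image of the \emph{abelian} Lie algebra $\Lie(T)\cong N_\C$ under an infinitesimal action, hence $[v_i,v_j]=0$ for all $i,j$ because $\rho$ is a Lie algebra homomorphism and $[e_i,e_j]=0$ in $N_\C$. Using the Leibniz rule for the Schouten bracket together with the graded antisymmetry, $[v_i\wedge v_j,\, v_k\wedge v_l]$ expands into a sum of terms each containing a factor $[v_\bullet,v_\bullet]=0$; summing over all pairs $(i,j)$, $(k,l)$ with the constants $a_{ij}a_{kl}$ gives $[\pi,\pi]=0$. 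Hence $\pi$ is a holomorphic Poisson structure, and it is $T$-invariant because $\pi\in W^2 = V_0^2$.

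The main obstacle — really the only nontrivial point — is the bookkeeping in the Schouten bracket expansion $[\pi,\pi]$: one must carefully apply the derivation property $[a\wedge b, c] = a\wedge[b,c] \pm [a,c]\wedge b$ (with the correct signs for the Gerstenhaber bracket on multivector fields) to reduce $[v_i\wedge v_j, v_k\wedge v_l]$ to terms in the $[v_\bullet,v_\bullet]$, and then invoke commutativity of the torus to kill every term. I would state the sign conventions explicitly and note that since the $v_i$ pairwise commute and $[\,\cdot,\cdot]$ is bilinear over $\C$, the entire computation collapses; no convergence or singularity issues arise because everything takes place inside the finite-dimensional space $W^2$ of genuinely global holomorphic bivector fields, so there is no need for the "movable singularity" discussion that precedes Lemma \ref{VIk-lem1}. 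One should also remark that the representation by the constants $a_{ij}$ is unique, which follows from injectivity of $\rho$ on $\wedge^2 N_\C$.
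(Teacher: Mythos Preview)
Your argument is correct, but note that the paper itself does not supply a proof of this proposition: it is stated with a citation to \cite{Hong 19} and no further justification is given here. So there is no ``paper's own proof'' to compare against in this manuscript.

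That said, your proof is precisely the natural one and matches what one would expect the cited argument to be. The two ingredients you identify are exactly right: Lemma~\ref{VIk-lem}(1) identifies the $T$-invariant holomorphic bivectors with $W^2=\rho(\wedge^2 N_\C)$, giving the form~\eqref{TP-prop-eqn}; and commutativity of the torus forces $[v_i,v_j]=0$, so the Schouten bracket $[\pi,\pi]$ vanishes term by term. One minor remark: whether $\rho$ is a Lie algebra homomorphism or anti-homomorphism depends on the convention for infinitesimal actions, but since $\Lie(T)$ is abelian this is irrelevant to your conclusion. Your observation about uniqueness of the $a_{ij}$ via injectivity of $\rho$ is a nice bonus that the statement itself does not assert.
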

 
By Proposition \ref{ToricPoisson-prop}, 
for any holomorphic toric Poisson structure $\pi$,
there exists a unique element $\Pi\in\wedge^2 N_\C$ such that 
$$\rho(\Pi)=\pi.$$ 
If the holomorphic toric Poisson structure $\pi$ is given 
in Equation \eqref{TP-prop-eqn}, then we have
\begin{equation}\label{Pi-eqn}
\Pi=\sum_{1\leq i<j\leq n}a_{ij} e_i\wedge e_j .
\end{equation}

\section{Holomorphic multi-vector fileds on toric varieties}\label{sect-multi}

\subsection{Toric varieties, lattices and polytopes}\label{polytope-sect}

\subsubsection{The polytope $P_{\Delta}$ and the set $S_\Delta$} 
Let $X=X_\Delta$ be a smooth compact toric variety.
Suppose that $\alpha_t ~(1\leq t\leq r)$ are all one dimensional cones in $\Delta(1)$. 
Let  $e(\alpha_t)\in N$ $(1\leq t\leq r)$ be the corresponding primitive elements, i.e., the unique generator of $\alpha_t \cap N$. 
Let
\begin{equation*}
E(\Delta)=\{e(\alpha_1),\ldots, e(\alpha_r)\}.
\end{equation*}

Let $P_\Delta$ be the polytope in $M_\R$ defined by
\begin{equation}\label{PDelta-eqn}
P_\Delta=\bigcap_{\alpha_t\in \Delta(1)}\{I\in M_\R\mid \langle I,e(\alpha_t)\rangle\geq -1\}.
\end{equation}
Since $X_{\Delta}$ is compact,  we have $|\Delta|=N_{\R}$,
thus $P_{\Delta}$ is a compact polytope in $M_{\R}$. Let 
\begin{equation*}
S_\Delta=\{I\mid I\in M\cap P_{\Delta}\}.
\end{equation*}
Then $S_{\Delta}$ is a non-empty finite set and $0\in S_{\Delta}$.

We denote by $P_\Delta(i)$ the set of all $i$-dimensional faces of the polytope $P_\Delta$.  Let
\begin{equation}\label{SDeltai}
S(\Delta,i)=\bigcup_{F_j\in P_\Delta(n-i)}\{I\in int(F_j)\cap M\}
\end{equation}
 for $0\leq i\leq n$, where $int(F_j)$ denotes the interior of $F_j$.
 Then we have 
 \begin{equation*}
 S(\Delta,i)\cap S(\Delta,j)=\emptyset
 \end{equation*}
 for all $0\leq i\neq j\leq n$.
 Let
\begin{equation*}
 S_k(\Delta)=\bigcup_{0\leq i\leq k}S(\Delta,i)
 \end{equation*}
  for all $0\leq k\leq n$. 
 Then we have
$$S_0(\Delta)\subseteq S_1(\Delta)\subseteq\ldots\subseteq S_n(\Delta)=S_{\Delta}$$
and $$S_{k+1}(\Delta)=S_k(\Delta)\cup S(\Delta,k+1).$$

\begin{lemma}\label{S0Delta-lem}
Let $X=X_\Delta$ be a smooth compact toric variety of dimension $n$. Then we have
\begin{equation*}
S_0(\Delta)=\{0\}.
\end{equation*}
\end{lemma}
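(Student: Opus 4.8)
The plan is to show that $S_0(\Delta) = S(\Delta,0)$ equals $\{0\}$ by analyzing the recession/support structure of the polytope $P_\Delta$. Recall $S_0(\Delta) = S(\Delta,0) = \bigcup_{F_j \in P_\Delta(n)}\{I \in int(F_j) \cap M\}$, so $S_0(\Delta)$ collects the lattice points lying in the interior of the top-dimensional faces of $P_\Delta$, i.e., in the interior of $P_\Delta$ itself (the only $n$-dimensional face of a full-dimensional polytope is $P_\Delta$, and if $P_\Delta$ is not full-dimensional then $P_\Delta(n) = \emptyset$ and the claim needs separate care). First I would argue that $P_\Delta$ is full-dimensional in $M_\R$: since $X = X_\Delta$ is compact, $|\Delta| = N_\R$, so the primitive generators $E(\Delta) = \{e(\alpha_1),\dots,e(\alpha_r)\}$ positively span $N_\R$; this forces the cone dual to the defining inequalities to be pointed, equivalently $P_\Delta = \bigcap_t \{\langle I, e(\alpha_t)\rangle \geq -1\}$ to be bounded with nonempty interior, since $0$ satisfies all inequalities strictly ($\langle 0, e(\alpha_t)\rangle = 0 > -1$). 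Hence $0 \in int(P_\Delta)$, so $0 \in S_0(\Delta)$.

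Next I would prove the reverse inclusion: any $I \in int(P_\Delta) \cap M$ must be $0$. Suppose $I \in M$ with $I \neq 0$ lies in $int(P_\Delta)$, so $\langle I, e(\alpha_t)\rangle > -1$ for all $t$. Because $I$ is a nonzero lattice point and the $e(\alpha_t)$ span $N_\R$, there is some generator $e(\alpha_s)$ with $\langle I, e(\alpha_s)\rangle < 0$ — otherwise $\langle I, \cdot\rangle \geq 0$ on all of $N_\R$ (by positive spanning), forcing $I = 0$. Since $\langle I, e(\alpha_s)\rangle$ is an integer and is negative, it is $\leq -1$, contradicting strict positivity $\langle I, e(\alpha_s)\rangle > -1$. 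Therefore $I = 0$, and combining both inclusions gives $S_0(\Delta) = \{0\}$.

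The main subtlety — and the step I would be most careful about — is the bookkeeping between "interior of the top faces" and "interior of $P_\Delta$," together with confirming $P_\Delta$ is genuinely $n$-dimensional so that $P_\Delta(n) = \{P_\Delta\}$ rather than empty. Full-dimensionality is exactly what compactness of $X$ buys us (via positive spanning of $E(\Delta)$), so the two facts are really the same input used twice. I expect no serious computational obstacle; the only thing to get right is the positive-spanning argument and the integrality trick $\langle I, e(\alpha_s)\rangle \in \Z_{<0} \implies \langle I, e(\alpha_s)\rangle \leq -1$, which is what rules out nonzero lattice points from the open polytope.
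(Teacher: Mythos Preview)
Your proposal is correct and follows essentially the same approach as the paper: both arguments combine the fact that compactness of $X_\Delta$ makes the ray generators $e(\alpha_t)$ positively span $N_\R$ with the integrality of the pairing $\langle I, e(\alpha_t)\rangle$ to force $I=0$. The paper phrases it directly (from $\langle I,e(\alpha_t)\rangle\geq 0$ for all $t$ deduce $\langle I,x\rangle\geq 0$ for all $x\in N_\R$), whereas you argue by contrapositive, and you are more explicit than the paper about full-dimensionality of $P_\Delta$ and about the inclusion $0\in S_0(\Delta)$; these are cosmetic differences, not substantive ones.
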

\begin{proof}
For any $I\in S_0(\Delta)$, we have
\begin{equation}\label{S0Delta-lem-eqn1}
\langle I, e(\alpha_t)\rangle\geq 0
\end{equation}
for all $\alpha_t\in \Delta(1)$. 
Since $X$ is compact, we have
\begin{equation*}
\bigcup_{\sigma\in\Delta(n)}\sigma=N_\R.
\end{equation*} 
Therefore for any $x\in N_\R$, there exists $\sigma\in\Delta(n)$, such that $x\in\sigma$.
Since $X$ is smooth, $x\in\sigma$ can be written as
\begin{equation}\label{S0Delta-lem-eqn2}
x=\sum_{\alpha_t\in\Delta(1)\cap\sigma} \lambda_t e(\alpha_t), \quad\lambda_t\geq 0.
\end{equation}
 By Equation  \eqref{S0Delta-lem-eqn1} and Equation \eqref{S0Delta-lem-eqn2}, 
 we have
 \begin{equation*}
\langle I,x\rangle\geq 0
\end{equation*}
for all $x\in N_\R$, which implies $I=0$. 
Hence $S_0(\Delta)=\{0\}.$
\end{proof}

For any $I\in S_{\Delta}$, there exists a unique face $F_I(\Delta)$ of $P_{\Delta}$, 
such that $I\in int(F_I(\Delta))\cap M$. 
Let
\begin{equation*}
E_I(\Delta)=\{e(\alpha_t)\in E(\Delta)\mid \langle I,e(\alpha_t)\rangle=-1\}.
\end{equation*} 
Suppose that $E_I(\Delta)=\{e(\alpha_{s_1}),\ldots e(\alpha_{s_l})\}$, 
where $1\leq s_1<\ldots<s_l\leq r$.
Let $F^{\perp}_I(\Delta)$ be a subspace of $N_\R$ defined by
\begin{equation*}
F^{\perp}_I(\Delta)=\sum_{1\leq t\leq l} \R\cdot e(\alpha_{s_t}).
\end{equation*}
We define
\begin{equation*} 
F^{\perp}_I(\Delta)=0 \quad\text{if}\quad E_I(\Delta)=\emptyset.
\end{equation*}
Then $F^{\perp}_I(\Delta)$ can be considered as the normal space of $F_I(\Delta)$.

\begin{lemma}\label{SDelta-lem}
\begin{enumerate}
\item
For any $I\in S_{\Delta}$, the following statements are equivalent:
\begin{enumerate}
\item $I\in S(\Delta,i)$,
\item $F_I(\Delta)\in P_{\Delta}(n-i)$,
\item $\dim F^{\perp}_I(\Delta)=i$.
\end{enumerate}
\item
For any $I\in S_{\Delta}$, the following statements are equivalent:
\begin{enumerate}
\item $I\in S_k(\Delta)$,
\item $\dim F^{\perp}_I(\Delta)\leq k$.
\end{enumerate}
\end{enumerate}
\end{lemma}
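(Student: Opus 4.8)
The plan is to prove the two items of Lemma~\ref{SDelta-lem} by unwinding the definitions and exploiting the duality between a face of $P_\Delta$ and its normal cone. First I would fix $I\in S_\Delta$ and record the basic structural facts: by definition there is a unique face $F_I(\Delta)$ with $I\in int(F_I(\Delta))\cap M$, the set $E_I(\Delta)=\{e(\alpha_t)\mid \langle I, e(\alpha_t)\rangle=-1\}$ consists of exactly the primitive generators whose defining inequality is active at $I$, and $F^\perp_I(\Delta)=\sum_{e(\alpha_{s_t})\in E_I(\Delta)}\R\cdot e(\alpha_{s_t})$. The face $F_I(\Delta)$ is cut out from $P_\Delta$ by turning precisely the inequalities indexed by $E_I(\Delta)$ into equalities, so $F_I(\Delta)=P_\Delta\cap\bigcap_{e(\alpha_{s_t})\in E_I(\Delta)}\{J\mid \langle J, e(\alpha_{s_t})\rangle=-1\}$, and since $F_I(\Delta)$ has nonempty interior inside this affine subspace (it contains $I$ in its relative interior), we get $\dim F_I(\Delta)=n-\dim\big(\mathrm{span}\, E_I(\Delta)\big)=n-\dim F^\perp_I(\Delta)$. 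This identity is the crux: it immediately gives the equivalence (b)$\iff$(c) in part~(1).

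Next I would handle the equivalence (a)$\iff$(b) in part~(1), which is essentially a matter of matching notation: by the definition \eqref{SDeltai} of $S(\Delta,i)$, an element $I\in S_\Delta$ lies in $S(\Delta,i)$ iff $I\in int(F_j)\cap M$ for some $F_j\in P_\Delta(n-i)$; by uniqueness of the face whose relative interior contains $I$, that face must be $F_I(\Delta)$, so $I\in S(\Delta,i)$ iff $F_I(\Delta)\in P_\Delta(n-i)$, i.e.\ $\dim F_I(\Delta)=n-i$. Combining with the dimension identity from the previous paragraph, $\dim F_I(\Delta)=n-i$ is equivalent to $\dim F^\perp_I(\Delta)=i$, closing the triangle of equivalences in part~(1). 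One should also note in passing that the sets $S(\Delta,i)$ are disjoint (already observed in the text) and that every $I\in S_\Delta$ lies in exactly one of them, namely the one with $i=\dim F^\perp_I(\Delta)$, so $S_\Delta=\bigsqcup_{i=0}^n S(\Delta,i)$ is a genuine partition; this is what makes part~(2) clean.

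For part~(2), I would argue directly from $S_k(\Delta)=\bigcup_{0\leq i\leq k}S(\Delta,i)$. Given $I\in S_\Delta$, by the partition just described $I\in S(\Delta,i_0)$ for the unique $i_0=\dim F^\perp_I(\Delta)$. Then $I\in S_k(\Delta)$ iff $i_0\leq k$, i.e.\ iff $\dim F^\perp_I(\Delta)\leq k$, which is exactly the claimed equivalence (a)$\iff$(b) of part~(2). For this I invoke part~(1)(a)$\iff$(c) to identify $i_0$ with $\dim F^\perp_I(\Delta)$.

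I do not expect a serious obstacle here; the statement is really a bookkeeping lemma translating between three encodings of the same data (membership in the stratum $S(\Delta,i)$, the dimension of the face $F_I(\Delta)$, and the dimension of its normal space $F^\perp_I(\Delta)$). The one point that needs a little care, and which I would state explicitly rather than gloss over, is the dimension identity $\dim F_I(\Delta)+\dim F^\perp_I(\Delta)=n$: this requires knowing that the inequalities active at a relative-interior point $I$ are \emph{exactly} those indexing $E_I(\Delta)$ (so no extra inequality becomes active on passing to the relative interior), and that $P_\Delta$ is full-dimensional in $M_\R$ — the latter because $X_\Delta$ is compact so $|\Delta|=N_\R$, which forces $P_\Delta$ to be bounded and, together with $0\in int(P_\Delta)$ when $\Delta(1)$ spans $N_\R$, full-dimensional. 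I would also mention the degenerate case $E_I(\Delta)=\emptyset$ (then $F^\perp_I(\Delta)=0$, $F_I(\Delta)=P_\Delta$ itself, consistent with $i=0$ and Lemma~\ref{S0Delta-lem}) to confirm the boundary of the range behaves correctly.
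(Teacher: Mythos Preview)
Your argument is correct. The paper actually states Lemma~\ref{SDelta-lem} without proof (and likewise for its analogue Lemma~\ref{Ssigma-lem}), evidently regarding it as a routine translation between the face stratification of $P_\Delta$ and the normal-space description via $E_I(\Delta)$. Your write-up supplies exactly the standard polytope argument the author leaves implicit: the key identity $\dim F_I(\Delta)+\dim F^\perp_I(\Delta)=n$, obtained from the fact that the inequalities active at a relative-interior point $I$ are precisely those indexed by $E_I(\Delta)$, together with full-dimensionality of $P_\Delta$ (which you correctly justify via $0\in\mathrm{int}(P_\Delta)$). Part~(2) then follows immediately from part~(1) and the partition $S_\Delta=\bigsqcup_i S(\Delta,i)$, as you say. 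There is nothing to compare approaches against; your proof is the natural one and would serve well as the omitted justification.
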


\subsubsection{The polyhedron $P_{\sigma}$ and the set $S_\sigma$} 
Let $\sigma$ be a smooth cone of dimension $n$. Then $\sigma$ can be written as
$$\sigma=\sum_{t=1}^{n}\R_{\geq 0}\cdot e_t(\sigma),$$
where $\{e_1(\sigma),\ldots, e_n(\sigma)\}$ is a $\Z$-basis of $N$.
Let
\begin{equation*}
 E(\sigma)=\{e_1(\sigma),\ldots, e_n(\sigma)\}.
 \end{equation*}

Let $P_{\sigma}$ be the polyhedron defined by
\begin{equation}\label{Psigma-eqn}
P_\sigma=\{I\in M_\R\mid \langle I, e_t(\sigma)\rangle\geq -1~ \text{for all}~ 1\leq t\leq n\}.
\end{equation}
Let
\begin{equation*}
S_{\sigma}=P_{\sigma}\cap M.
\end{equation*}

We denote by $P_\sigma(i)$ the set of all $i$-dimensional faces of the polyhedron $P_\Delta$.  Let
\begin{equation*}
S(\sigma,i)=\bigcup_{F_j\in P_\sigma(n-i)}\{I\in int(F_j)\cap M\}
\end{equation*}
 for $0\leq i\leq n$, where $int(F_j)$ denotes the interior of $F_j$.
 Then we have 
 \begin{equation*}
 S(\sigma,i)\cap S(\sigma,j)=\emptyset
 \end{equation*}
 for all $0\leq i\neq j\leq n$. 
Let
\begin{equation*}
 S_k(\sigma)=\bigcup_{0\leq i\leq k}S(\sigma,i)
 \end{equation*}
  for all $0\leq k\leq n$. 
 Then we have
$$S_0(\sigma)\subseteq S_1(\sigma)\subseteq\ldots\subseteq S_n(\sigma)
=S_{\sigma}.$$

For any $I\in S_{\sigma}$,  there exists a unique face $F_I(\sigma)$ of $P_{\sigma}$, 
such that $I\in int(F_I(\sigma))\cap M$. 
 Let 
$$E_I(\sigma)=\{e_t(\sigma)\in E(\sigma)\mid \langle I,e_t(\sigma)\rangle=-1\}.$$

Suppose that $E_I(\sigma)=\{e_{s_1}(\sigma),\ldots e_{s_j}(\sigma)\}$, 
where $1\leq s_1<\ldots<s_j\leq n$.
Let $F^{\perp}_I(\sigma)$ be a subspace of $N_\R$ defined by 
\begin{equation*}
F^{\perp}_I(\sigma)=\sum_{1\leq t\leq j} \R\cdot e_{s_t}(\sigma).
\end{equation*}
And we define
\begin{equation*} 
F^{\perp}_I(\sigma)=0 \quad\text{if}\quad E_I(\sigma)=\emptyset.
\end{equation*}

Then $F^{\perp}_I(\sigma)$ can be considered as the normal space of $F_I(\sigma)$.

Similar to Lemma \ref{SDelta-lem}, we have
\begin{lemma}\label{Ssigma-lem}
\begin{enumerate}
\item
For any $I\in S_{\sigma}$, the following statements are equivalent:
\begin{enumerate}
\item $I\in S(\sigma,j)$,
\item $F_I(\sigma)\in P_{\sigma}(n-j)$,
\item $\dim F^{\perp}_I(\sigma)=j$,
\item $\# E_I(\sigma)=j$.
\end{enumerate}
\item
For any $I\in S_{\sigma}$, the following statements are equivalent:
\begin{enumerate}
\item $I\in S_k(\sigma)$,
\item $\dim F^{\perp}_I(\sigma)\leq k$,
\item $\# E_I(\sigma)\leq k$.
\end{enumerate}
\end{enumerate}
\end{lemma}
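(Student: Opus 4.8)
\textbf{Proof proposal for Lemma \ref{Ssigma-lem}.}
The plan is to reduce the whole statement to the single concrete case where $\sigma$ is the standard first-orthant cone, via the smoothness hypothesis, and then verify each equivalence by elementary linear algebra on the defining inequalities of $P_\sigma$. First I would observe that since $\sigma$ is a smooth $n$-dimensional cone, the set $E(\sigma)=\{e_1(\sigma),\ldots,e_n(\sigma)\}$ is a $\Z$-basis of $N$; let $\{e_1^\ast,\ldots,e_n^\ast\}$ be the dual basis of $M$. Then the linear isomorphism $M_\R\cong\R^n$ sending $I\mapsto(\langle I,e_1(\sigma)\rangle,\ldots,\langle I,e_n(\sigma)\rangle)$ carries $P_\sigma$ onto the shifted orthant $\{x\in\R^n\mid x_t\geq -1\text{ for all }t\}$, i.e.\ a translate of $(\R_{\geq 0})^n$. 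Under this identification the face structure of $P_\sigma$ is completely transparent: the faces are exactly the subsets obtained by setting some subset $S\subseteq\{1,\ldots,n\}$ of coordinates equal to $-1$, such a face has dimension $n-\#S$, and its relative interior consists of the points with $x_t=-1$ for $t\in S$ and $x_t>-1$ for $t\notin S$. This is the one genuinely geometric input, and it is routine.

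Next I would unwind the definitions against this picture. For $I\in S_\sigma$, the set $E_I(\sigma)=\{e_t(\sigma)\mid\langle I,e_t(\sigma)\rangle=-1\}$ corresponds precisely to the coordinate set $S$ cutting out the minimal face $F_I(\sigma)$ containing $I$ in its relative interior; hence $\#E_I(\sigma)=\#S$, $\dim F_I(\sigma)=n-\#S$, and therefore $F_I(\sigma)\in P_\sigma(n-j)$ iff $\#E_I(\sigma)=j$. Moreover $F_I^\perp(\sigma)=\sum_{e\in E_I(\sigma)}\R\cdot e$ is the span of $\#E_I(\sigma)$ vectors drawn from a $\Z$-basis of $N$, so these vectors are linearly independent and $\dim F_I^\perp(\sigma)=\#E_I(\sigma)$. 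Combining these identifications with the definition $S(\sigma,j)=\bigcup_{F_k\in P_\sigma(n-j)}\{I\in int(F_k)\cap M\}$ gives the chain of equivalences in part (1): $I\in S(\sigma,j)\iff F_I(\sigma)\in P_\sigma(n-j)\iff\dim F_I^\perp(\sigma)=j\iff\#E_I(\sigma)=j$. (One should also note that each lattice point $I\in S_\sigma$ lies in the relative interior of exactly one face, which is why $F_I(\sigma)$ is well defined and why the $S(\sigma,j)$ are pairwise disjoint — this is already asserted in the text preceding the lemma.)

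For part (2) I would simply sum part (1) over $j$: since $S_k(\sigma)=\bigcup_{0\leq i\leq k}S(\sigma,i)$ is a disjoint union and the three quantities $j$, $\dim F_I^\perp(\sigma)$, $\#E_I(\sigma)$ agree for each $I$ by part (1), we get $I\in S_k(\sigma)\iff\dim F_I^\perp(\sigma)\leq k\iff\#E_I(\sigma)\leq k$.

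I do not expect any serious obstacle here; the lemma is essentially bookkeeping once the orthant model is in place. The only point requiring a little care is the claim that $E_I(\sigma)$ really does determine the \emph{minimal} face through $I$ — i.e.\ that a lattice point of $P_\sigma$ cannot lie in the relative interior of a face while satisfying \emph{extra} active constraints beyond those defining that face — but in the orthant model this is immediate, since the active constraints at a point $x$ are exactly $\{t:x_t=-1\}$ and the face through $x$ in its relative interior is exactly the one cut out by that constraint set. Everything else is linear independence of a subset of a basis, which is automatic.
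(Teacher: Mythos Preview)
Your argument is correct. Note, however, that the paper does not actually supply a proof of this lemma: it is introduced with ``Similar to Lemma~\ref{SDelta-lem}, we have'' and left at that, and Lemma~\ref{SDelta-lem} itself is likewise stated without proof, both being treated as immediate from the definitions of $F_I$, $E_I$, $F_I^\perp$, and the face stratification of $P_\sigma$ (resp.\ $P_\Delta$). Your write-up via the orthant model is exactly the standard way to make these implicit identifications explicit, and it is entirely consistent with what the paper assumes; you have simply filled in the bookkeeping the author chose to omit.
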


Now we have the following proposition.
\begin{proposition}\label{PFS-lem}
Let $X_\Delta$ be a smooth compact toric variety. 
\begin{enumerate}
\item
We have
\begin{equation*}
P_{\Delta}=\bigcap_{\sigma\in\Delta(n)} P_{\sigma}
\end{equation*}
and 
\begin{equation*}
S_{\Delta}=\bigcap_{\sigma\in\Delta(n)} S_{\sigma}.
\end{equation*}
\item
For any $\sigma\in\Delta(n)$ and $I\in P_{\Delta}\subset P_{\sigma}$, 
$F^{\perp}_I(\sigma)$ is a subspace of $F^{\perp}(\Delta)$. 
Moreover, we have
\begin{equation*}
F^{\perp}_I(\Delta)=\sum_{\sigma\in\Delta(n)}F^{\perp}_I(\sigma).
\end{equation*}
\item
We have
\begin{equation*}
S_k(\Delta)\subseteq\bigcap_{\sigma\in\Delta(n)}S_k(\sigma).
\end{equation*}
\end{enumerate}
\end{proposition}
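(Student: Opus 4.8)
The plan is to prove the three parts in order, with the bulk of the work being (1), from which (2) and (3) follow by unwinding definitions together with Lemma \ref{SDelta-lem} and Lemma \ref{Ssigma-lem}. For part (1), I would argue that since $X_\Delta$ is compact, $|\Delta| = N_\R$, so $E(\Delta) = \bigcup_{\sigma \in \Delta(n)} E(\sigma)$: every primitive generator of a ray $\alpha_t \in \Delta(1)$ is one of the $e_i(\sigma)$ for some maximal cone $\sigma$ containing $\alpha_t$, and conversely each $e_i(\sigma)$ generates a ray of $\Delta$ since $\Delta$ is a fan and $\sigma$ is smooth. Comparing the defining inequalities \eqref{PDelta-eqn} for $P_\Delta$ with \eqref{Psigma-eqn} for $P_\sigma$, the inequalities cutting out $\bigcap_{\sigma \in \Delta(n)} P_\sigma$ are exactly $\langle I, e(\alpha_t)\rangle \geq -1$ over all $\alpha_t \in \Delta(1)$ (possibly with repetitions, which do not change the intersection), so $P_\Delta = \bigcap_{\sigma} P_\sigma$. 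Intersecting with the lattice $M$ gives $S_\Delta = \bigcap_\sigma S_\sigma$ immediately.

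For part (2), fix $\sigma \in \Delta(n)$ and $I \in P_\Delta \subseteq P_\sigma$. By definition $F^\perp_I(\sigma) = \sum \R \cdot e_t(\sigma)$ over those $t$ with $\langle I, e_t(\sigma)\rangle = -1$; each such $e_t(\sigma)$ is the primitive generator $e(\alpha)$ of a ray $\alpha \in \Delta(1)$ with $\langle I, e(\alpha)\rangle = -1$, hence lies in $E_I(\Delta)$, so $F^\perp_I(\sigma) \subseteq F^\perp_I(\Delta)$. For the displayed equality, the inclusion $\sum_\sigma F^\perp_I(\sigma) \subseteq F^\perp_I(\Delta)$ is then clear. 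Conversely, $F^\perp_I(\Delta)$ is spanned by the $e(\alpha_t)$ with $\langle I, e(\alpha_t)\rangle = -1$, and since $E(\Delta) = \bigcup_\sigma E(\sigma)$ each such generator appears as some $e_t(\sigma)$, necessarily with $\langle I, e_t(\sigma)\rangle = -1$, hence in $F^\perp_I(\sigma)$; summing over $\sigma$ recovers all generators of $F^\perp_I(\Delta)$, giving $F^\perp_I(\Delta) \subseteq \sum_\sigma F^\perp_I(\sigma)$.

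Part (3) then follows formally: if $I \in S_k(\Delta)$, then by Lemma \ref{SDelta-lem}(2) we have $\dim F^\perp_I(\Delta) \leq k$; by part (2), $F^\perp_I(\sigma) \subseteq F^\perp_I(\Delta)$ for every $\sigma \in \Delta(n)$, so $\dim F^\perp_I(\sigma) \leq k$ as well, and by Lemma \ref{Ssigma-lem}(2) this gives $I \in S_k(\sigma)$ for all $\sigma$, i.e. $I \in \bigcap_{\sigma \in \Delta(n)} S_k(\sigma)$. I expect the main subtlety — though not a deep one — to be the bookkeeping in part (2): one must be careful that a ray generator $e(\alpha)$ can occur as $e_t(\sigma)$ for several maximal cones $\sigma$, and that the condition $\langle I, e_t(\sigma)\rangle = -1$ is intrinsic to the ray and independent of which $\sigma$ is chosen, so that the sets $E_I(\sigma)$ genuinely "glue" to $E_I(\Delta)$. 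Everything else is a direct translation between the inequality descriptions of the polytopes and their lattice points.
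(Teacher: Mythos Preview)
Your proposal is correct and follows essentially the same route as the paper: in each part the key observation is that $E(\Delta)=\bigcup_{\sigma\in\Delta(n)}E(\sigma)$ (from compactness), which immediately gives $P_\Delta=\bigcap_\sigma P_\sigma$ and $E_I(\Delta)=\bigcup_\sigma E_I(\sigma)$, and then parts (2) and (3) are deduced via Lemma~\ref{SDelta-lem} and Lemma~\ref{Ssigma-lem} exactly as you outline. Your version spells out the two inclusions in part (2) and the ray-generator bookkeeping more explicitly than the paper does, but there is no substantive difference in argument.
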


\begin{proof}
\begin{enumerate}
\item
Since $X_\Delta$ is a smooth compact toric variety, we have 
$|\Delta|=\bigcup_{\sigma\in\Delta(n)}|\sigma|$. 
As a consequence, we have 
\begin{equation*}
\bigcup_{\sigma\in\Delta(n)}\{e_1(\sigma),\ldots,e_n(\sigma)\}=\{e(\alpha_1),\ldots,e(\alpha_r)\}.
\end{equation*}
By Equation \eqref{PDelta-eqn} and Equation \eqref{Psigma-eqn}, 
we get that $$P_{\Delta}=\bigcap_{\sigma\in\Delta(n)} P_{\sigma}.$$
Since $S_\Delta=P_\Delta\cap M$ and $S_\sigma=P_\sigma\cap M$, 
we get that
$$S_{\Delta}=\bigcap_{\sigma\in\Delta(n)} S_{\sigma}.$$

\item
Since $E_I(\sigma)\subseteq E_I(\Delta)$,  
we get that $F^{\perp}_I(\sigma)$ is a subspace of $F^{\perp}_I(\Delta)$.  
And by $$\bigcup_{\sigma\in\Delta(n)} E_I(\sigma)=E_I(\Delta),$$
we get that $F^{\perp}_I(\Delta)=\sum_{\sigma\in\Delta(n)}F^{\perp}_I(\sigma)$. 
\item
For any $I\in S_k(\Delta)$, by Lemma \ref{SDelta-lem}, we have
$$\dim F^{\perp}_I(\Delta)\leq k.$$
For any $\sigma\in\Delta(n)$, 
since $F^{\perp}_I(\sigma)$ is a subspace of $F^{\perp}_I(\Delta)$, we have 
 $$\dim F^{\perp}_I(\sigma)\leq\dim F^{\perp}_I(\Delta)\leq k.$$
 By Lemma \ref{Ssigma-lem}, we have $I\in S_k(\sigma)$.
 
 Therefore, we have
 $$S_k(\Delta)\subseteq S_k(\sigma)$$
 for all $\sigma\in\Delta(n)$. And consequently,
 $$S_k(\Delta)\subseteq\bigcap_{\sigma\in\Delta(n)}S_k(\sigma).$$
\end{enumerate}
\end{proof}

\subsection{The vector space $N_{I}^k(\Delta)$ and $N_{I}^k(\sigma)$}
\subsubsection{The vector space $N_{I}^k(\Delta)$}\label{VIk-Delta-sect}
Let $X_\Delta$ be a smooth compact toric variety of dimension $n$.
For any  $I\in S_{\Delta}$, since $S_{\Delta}=\bigcup_{0\leq i\leq n}S(\Delta,i)$,
there exists a unique integer $0\leq i\leq n$, such that $I\in S(\Delta,i)$. 
We denote by $|I_\Delta|$ the integer $i$, \i.e., $|I_\Delta|=i$. Then we have
\begin{equation}\label{|I|-eqn}
 |I_\Delta|=i\Longleftrightarrow I\in S(\Delta,i).
\end{equation}

Suppose that $I\in int(F_I(\Delta))\cap M$, by Lemma \ref{SDelta-lem},
we have $F_I(\Delta)\in P_\Delta(n-i)$. 
And suppose that $$E_I(\Delta)=\{e(\alpha_{s_1}),\ldots e(\alpha_{s_l})\},$$ 
where $1\leq s_1<\ldots<s_l\leq r$.
By Lemma \ref{SDelta-lem}, 
$F^{\perp}_I(\Delta)=\sum_{1\leq t\leq l} \R\cdot e(\alpha_{s_t})$
 is a $i$-dimensional subspace of $N_\R$.
No loss of generality, suppose that 
$\{e(\alpha_{s_1}),\ldots,e(\alpha_{s_{i}})\}\subseteq\{e(\alpha_{s_1}),\ldots,e(\alpha_{s_{l}})\}$ 
is a basis of $F^{\perp}_I(\Delta)$, where $i\leq l$.

Let
 \begin{gather*}
 \cE_I(\Delta)=e(\alpha_{s_1}) \wedge\ldots\wedge e(\alpha_{s_i})\in\wedge^{|I_\Delta|} N.
\end{gather*}
Since $\{e(\alpha_{s_1}),\ldots,e(\alpha_{s_i})\}$ is a basis of $F^{\perp}_I(\Delta)$, 
we have
\begin{equation*} 
\wedge^i F^{\perp}_I(\Delta)=\R\cdot\cE_I(\Delta).
\end{equation*}
Thus  $\cE_I(\Delta)$ is well defined up to a scalar if we  choose different  basis 
$\{e(\alpha_{s_1}),\ldots,e(\alpha_{s_i})\}\subseteq E_I(\Delta)$ for $F^{\perp}_I(\Delta)$.

Let $N_I^k(\Delta)$ be the subspace of $\wedge^k N_{\C}$ ($0\leq k\leq n$) defined by
\begin{equation*}
N_I^k(\Delta)=
\begin{cases}
\C\cdot\mathcal{E}_I(\Delta)\wedge  (\wedge^{k-|I_\Delta|} N_\C)\quad 
&\text{for}~ |I_\Delta|\leq k,\\
0\quad &\text{for}~|I_\Delta|>k.
\end{cases}
\end{equation*}

The following lemma gives a description of the vector space $N_I^k(\Delta)$.
\begin{lemma}\label{WDelta-lem}
Let $X_{\Delta}$ be a smooth compact toric variety.
For any $I\in S_\Delta$ and $x\in\wedge^k N_\C$, 
the following statements are equivalent:
\begin{enumerate}
\item 
$x\in N_I^k(\Delta)$,
\item $x\wedge e(\alpha_t)=0$ for all $e(\alpha_t)\in E_I(\Delta)$,
\item $x\wedge y=0$ for all $y\in F^{\perp}_I(\Delta)$.
\end{enumerate}
\end{lemma}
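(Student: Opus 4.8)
The plan is to prove the cycle of equivalences $(1)\Rightarrow(3)\Rightarrow(2)\Rightarrow(1)$, working throughout inside the exterior algebra $\wedge^\bullet N_\C$ and exploiting the fact that $E_I(\Delta)$ spans $F^\perp_I(\Delta)$ and contains a basis $\{e(\alpha_{s_1}),\dots,e(\alpha_{s_i})\}$ of it, where $i=|I_\Delta|$. The statement $x\wedge\xi=0$ for a nonzero decomposable $\xi=e(\alpha_{s_1})\wedge\cdots\wedge e(\alpha_{s_i})=\mathcal{E}_I(\Delta)$ is the familiar linear-algebra condition ``$x$ is divisible by $\xi$,'' so the whole lemma is essentially a bookkeeping exercise translating between divisibility, annihilation by a spanning set, and annihilation by one generator.

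First I would handle $(1)\Rightarrow(3)$: if $x\in N_I^k(\Delta)$ then by definition $|I_\Delta|\le k$ and $x=\mathcal{E}_I(\Delta)\wedge z$ for some $z\in\wedge^{k-i}N_\C$. For any $y\in F^\perp_I(\Delta)$, write $y$ in the basis $\{e(\alpha_{s_1}),\dots,e(\alpha_{s_i})\}$; then $y\wedge\mathcal{E}_I(\Delta)=0$ since $y$ is a linear combination of the factors already appearing in $\mathcal{E}_I(\Delta)$, hence $x\wedge y=\pm(\mathcal{E}_I(\Delta)\wedge y)\wedge z=0$. The implication $(3)\Rightarrow(2)$ is immediate because $E_I(\Delta)\subseteq F^\perp_I(\Delta)$.

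The substance is $(2)\Rightarrow(1)$. Assume $x\wedge e(\alpha_t)=0$ for every $e(\alpha_t)\in E_I(\Delta)$; since $E_I(\Delta)$ spans $F^\perp_I(\Delta)$, this gives $x\wedge y=0$ for all $y\in F^\perp_I(\Delta)$, in particular $x\wedge e(\alpha_{s_1})=\cdots=x\wedge e(\alpha_{s_i})=0$. Extend $\{e(\alpha_{s_1}),\dots,e(\alpha_{s_i})\}$ to a $\C$-basis $\{f_1,\dots,f_n\}$ of $N_\C$ with $f_t=e(\alpha_{s_t})$ for $1\le t\le i$, and expand $x$ in the induced basis of $\wedge^k N_\C$ consisting of wedges $f_J=f_{j_1}\wedge\cdots\wedge f_{j_k}$ over $k$-subsets $J\subseteq\{1,\dots,n\}$. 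The condition $x\wedge f_t=0$ for a fixed $t\le i$ forces the coefficient of every $f_J$ with $t\notin J$ to vanish; imposing this for all $t=1,\dots,i$ simultaneously leaves only those $f_J$ with $\{1,\dots,i\}\subseteq J$, i.e. $x$ is a $\C$-linear combination of terms of the form $f_1\wedge\cdots\wedge f_i\wedge(\text{something})=\mathcal{E}_I(\Delta)\wedge(\text{something})$ up to a scalar. Hence $x\in\C\cdot\mathcal{E}_I(\Delta)\wedge(\wedge^{k-i}N_\C)=N_I^k(\Delta)$; and if $k<i$ there are no such subsets $J$, so $x=0=N_I^k(\Delta)$, consistent with the definition. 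This also uses implicitly that $E_I(\Delta)=\emptyset$ (the case $i=0$) is handled trivially since then the hypothesis is vacuous and $N_I^k(\Delta)=\wedge^k N_\C$.

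The only mild obstacle is the combinatorial step in $(2)\Rightarrow(1)$ — verifying that simultaneous divisibility by the independent vectors $f_1,\dots,f_i$ is equivalent to divisibility by their wedge product $f_1\wedge\cdots\wedge f_i$. This is standard, but one should be careful to do it with a single adapted basis rather than one vector at a time, so as to avoid a spurious induction on $i$; alternatively one can cite the elementary fact that for a nonzero decomposable $\xi\in\wedge^i N_\C$ the annihilator condition $x\wedge\xi=0$ in $\wedge^{k}N_\C$ is equivalent to $x$ lying in $\xi\wedge\wedge^{k-i}N_\C$. Everything else is immediate from the definitions of $E_I(\Delta)$, $F^\perp_I(\Delta)$, $\mathcal{E}_I(\Delta)$ and $N_I^k(\Delta)$.
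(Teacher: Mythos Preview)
Your main argument is correct and follows exactly the approach the paper sketches: extend $\{e(\alpha_{s_1}),\dots,e(\alpha_{s_i})\}$ to a basis of $N_\C$ and read off the equivalences from the induced basis of $\wedge^k N_\C$. The paper simply writes ``By some computations under the basis, we can prove Lemma~\ref{WDelta-lem}. Here we omit the detail,'' so your write-up in fact supplies what the paper leaves out.

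One caution, though: the ``elementary fact'' you invoke twice --- that for a nonzero decomposable $\xi\in\wedge^i N_\C$ the condition $x\wedge\xi=0$ is equivalent to $x\in\xi\wedge(\wedge^{k-i}N_\C)$ --- is \emph{false} as stated. For instance in $\wedge^\bullet\C^3$ take $\xi=e_1\wedge e_2$ and $x=e_1\wedge e_3$: then $x\wedge\xi=0$ but $x$ is not a multiple of $\xi$. The correct statement, and the one your basis computation actually proves, is that $x\wedge f_t=0$ for \emph{each} factor $f_t$ of $\xi$ (equivalently $x\wedge y=0$ for all $y$ in the span of the $f_t$) is equivalent to divisibility by $\xi$. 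Since your actual proof of $(2)\Rightarrow(1)$ works factor by factor and never uses the mis-stated version, the argument stands; just delete or correct those two side remarks.
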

\begin{proof}
\begin{enumerate}
\item [(a)]
For $I\in S_\Delta$, in the case of $|I_\Delta|\leq k$, we suppose that $0\leq |I_\Delta|=i\leq k$ and 
 \begin{equation*}
 \cE_I(\Delta)=e(\alpha_{s_1}) \wedge\ldots\wedge e(\alpha_{s_i})\in\wedge^{i} N,
\end{equation*}
where $\{e(\alpha_{s_1}),\ldots,e(\alpha_{s_i})\}$ is a basis of 
$F^{\perp}_I(\Delta)\subseteq N_\C$.

We extend $\{e(\alpha_{s_1}),\ldots,e(\alpha_{s_i})\}$ to be a basis to $N_\C$,
\i.e.,  suppose that $$\{e(\alpha_{s_1}),\ldots,e(\alpha_{s_i}), f_1,\ldots, f_{n-i}\}$$
is a basis of $N_\C$. 
By some computations under the basis, we can prove Lemma \ref{WDelta-lem}. 
Here we omit the detail.
\item [(b)]
For $I\in S_\Delta$, In the case of $|I_\Delta|>k$, we have $N_I^k(\Delta)=0$. 
Lemma \ref{WDelta-lem} can be proved similarly.
\end{enumerate}
\end{proof}

\subsubsection{The vector space $N_{I}^k(\sigma)$}
Let $\sigma=\sum_{t=1}^{n}\R_{\geq 0}e_t(\sigma)$ be a smooth cone 
 of dimension $n$, where $\{e_1(\sigma),\ldots, e_n(\sigma)\}$ is a $\Z$-basis of $N$.
 Let $U_{\sigma}\cong\C^n$ be the affine toric variety associated the cone $\sigma$.
 Then we have $U_{\sigma}\cong\C^n$.
  
For any  $I\in S_{\sigma}$, since $S_{\sigma}=\bigcup_{0\leq j\leq n}S(\sigma,j)$,
there exists a unique integer $0\leq j\leq n$, such that $I\in S(\sigma,j)$. 
We denote by $|I_\sigma|$ the integer $j$, \i.e., $|I_\sigma|=j$.

Suppose that 
$$E_I(\sigma)=\{e_{s_1}(\sigma),\ldots, e_{s_j}(\sigma)\},$$
where $0\leq s_1<\ldots<s_j\leq n$.

Let
 \begin{equation*}
 \cE_I(\sigma)=e_{s_1}(\sigma) \wedge\ldots\wedge e_{s_j}(\sigma)\in\wedge^{|I_\sigma|} N.
\end{equation*}

Let $N_I^k(\sigma)$ be the subspace of $\wedge^k N_\C$ defined by
\begin{equation*}
N_I^k(\sigma)=
\begin{cases}
\C\cdot\mathcal{E}_I(\sigma)\wedge(\wedge^{k-|I_\sigma|} N_\C)
\quad\text{for}~ |I_\sigma|\leq k,\\
0\quad\text{for}~|I_\sigma|>k,
\end{cases}
\end{equation*}
for all $0\leq k\leq n$.

Similar to Lemma \ref{WDelta-lem}, we have
\begin{lemma}\label{Wsigma-lem}
Let $\sigma$ be a smooth cone of dimension $n$ in $N_\R$.
For any $I\in S_\sigma$ and $x\in\wedge^k N_\C$, 
the following statements are equivalent:
\begin{enumerate}
\item
$x\in N_I^k(\sigma)$,
\item
$x\wedge e_t(\sigma)=0$ for all $e_t(\sigma)\in E_I(\sigma)$,
\item 
$x\wedge y=0$ for all $y\in F^{\perp}_I(\sigma)$.
\end{enumerate}
\end{lemma}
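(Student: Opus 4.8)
# Proof Proposal for Lemma \ref{Wsigma-lem}

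The plan is to reduce the statement to a direct linear-algebra computation in exterior powers of a vector space, exactly parallel to the proof of Lemma \ref{WDelta-lem}, since the definitions of $N_I^k(\sigma)$, $E_I(\sigma)$, and $F^{\perp}_I(\sigma)$ are formally identical to their $\Delta$-counterparts once one fixes the distinguished $\Z$-basis $\{e_1(\sigma),\dots,e_n(\sigma)\}$ of $N$. The key simplification here — which is \emph{not} present in the $\Delta$-case — is that the vectors $e_t(\sigma)$ already form a basis of $N$, so there is no need to select a sub-basis of $F^{\perp}_I(\sigma)$: the set $E_I(\sigma)=\{e_{s_1}(\sigma),\dots,e_{s_j}(\sigma)\}$ is automatically linearly independent, it is itself a basis of $F^{\perp}_I(\sigma)$, and $j = |I_\sigma|$ by Lemma \ref{Ssigma-lem}.

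First I would set up coordinates: after relabeling the basis if necessary, write $\{e_1(\sigma),\dots,e_n(\sigma)\}$ with $E_I(\sigma)=\{e_1(\sigma),\dots,e_j(\sigma)\}$, so that $\cE_I(\sigma)=e_1(\sigma)\wedge\dots\wedge e_j(\sigma)$ and $F^{\perp}_I(\sigma)=\sum_{t=1}^{j}\R\cdot e_t(\sigma)$. Then for $x\in\wedge^k N_\C$ I would expand $x$ in the monomial basis $\{e_{a_1}(\sigma)\wedge\dots\wedge e_{a_k}(\sigma)\}_{a_1<\dots<a_k}$ of $\wedge^k N_\C$. The equivalences then become bookkeeping:

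\begin{itemize}
\item For $(1)\Rightarrow(3)$: if $x\in N_I^k(\sigma)$ then either $|I_\sigma|>k$, forcing $x=0$ and the claim is trivial, or $x=\cE_I(\sigma)\wedge z$ for some $z\in\wedge^{k-j}N_\C$, and then $x\wedge e_t(\sigma)=\cE_I(\sigma)\wedge z\wedge e_t(\sigma)$ contains the repeated factor $e_t(\sigma)$ for each $t\le j$, hence vanishes; by linearity $x\wedge y=0$ for every $y\in F^{\perp}_I(\sigma)$.
\item $(3)\Rightarrow(2)$ is immediate since each $e_t(\sigma)\in E_I(\sigma)$ lies in $F^{\perp}_I(\sigma)$.
\item For $(2)\Rightarrow(1)$: writing $x=\sum c_A\, e_A$ over increasing multi-indices $A$ of size $k$, the condition $x\wedge e_t(\sigma)=0$ for $1\le t\le j$ forces $c_A=0$ for every $A$ not containing $t$ (comparing coefficients of the independent monomials $e_A\wedge e_t(\sigma)$). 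Imposing this for all $t\le j$ simultaneously shows $c_A=0$ unless $\{1,\dots,j\}\subseteq A$, i.e.\ unless $e_A = \pm\,\cE_I(\sigma)\wedge(\text{a }(k-j)\text{-monomial})$. If $j>k$ there are no such $A$, so $x=0$; if $j\le k$ then $x\in\C\cdot\cE_I(\sigma)\wedge(\wedge^{k-j}N_\C)=N_I^k(\sigma)$.
\end{itemize}

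The only genuine obstacle is the degenerate case $|I_\sigma|>k$: one must check that conditions $(2)$ and $(3)$ then force $x=0$, which is exactly the coefficient argument above — with more than $k$ distinct wedge factors required to survive, no monomial of degree $k$ can, so $x=0=N_I^k(\sigma)$. Everything else is routine multilinear algebra identical in form to the (omitted) computation behind Lemma \ref{WDelta-lem}; I would simply remark that the proof is the same as that of Lemma \ref{WDelta-lem} with $E_I(\Delta)$, $F^{\perp}_I(\Delta)$ replaced by $E_I(\sigma)$, $F^{\perp}_I(\sigma)$, and with the added convenience that $E_I(\sigma)$ is itself a basis of $F^{\perp}_I(\sigma)$.
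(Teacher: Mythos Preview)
Your proposal is correct and follows exactly the approach the paper indicates: the paper gives no separate proof of this lemma, merely noting it is ``similar to Lemma \ref{WDelta-lem},'' and the proof of Lemma \ref{WDelta-lem} itself just says to extend $\{e(\alpha_{s_1}),\dots,e(\alpha_{s_i})\}$ to a basis of $N_\C$ and compute, with details omitted. You have carried out precisely that computation (with the noted simplification that $E_I(\sigma)$ is already part of a basis of $N$), so your argument is a faithful and more detailed version of what the paper sketches.
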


\begin{remark}
Lemma \ref{Wsigma-lem}  is true in the case of $|I_\sigma|>k$, where $N_I^k(\sigma)=0$.
\end{remark}

\begin{proposition} \label{NIk-lem}
Let $X_{\Delta}$ be a smooth compact toric variety associated with a fan $\Delta$ in $N_{\R}$.
For all $I\in S_\Delta=\bigcap_{\sigma\in\Delta(n)}S_\sigma$,
we have
\begin{equation}\label{capNIk-eqn}
\bigcap_{\sigma\in\Delta(n)}N_I^k(\sigma)=N_I^k(\Delta).
\end{equation} 
Especially, we have
\begin{equation*}
\bigcap_{\sigma\in\Delta(n)}N_I^k(\sigma)=0
\quad\text{for all}~ I\in S_\Delta\backslash S_k(\Delta).
\end{equation*}
\end{proposition}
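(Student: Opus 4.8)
The plan is to prove the identity $\bigcap_{\sigma\in\Delta(n)}N_I^k(\sigma)=N_I^k(\Delta)$ via the ``wedge-annihilator'' characterizations furnished by Lemma \ref{WDelta-lem} and Lemma \ref{Wsigma-lem}, thereby reducing the statement about intersections of explicitly-constructed subspaces of $\wedge^k N_\C$ to a statement about linear spans of the relevant normal spaces inside $N_\R$. Fix $I\in S_\Delta=\bigcap_{\sigma\in\Delta(n)}S_\sigma$. By Lemma \ref{Wsigma-lem}, for each $\sigma\in\Delta(n)$ an element $x\in\wedge^k N_\C$ lies in $N_I^k(\sigma)$ if and only if $x\wedge y=0$ for all $y\in F^{\perp}_I(\sigma)$; hence $x\in\bigcap_{\sigma}N_I^k(\sigma)$ if and only if $x\wedge y=0$ for every $y\in\bigcup_{\sigma\in\Delta(n)}F^{\perp}_I(\sigma)$, which — since the annihilation condition $x\wedge(-)=0$ is linear in the second slot — is the same as $x\wedge y=0$ for all $y\in\sum_{\sigma\in\Delta(n)}F^{\perp}_I(\sigma)$.

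The next step is to invoke Proposition \ref{PFS-lem}(2), which gives exactly $F^{\perp}_I(\Delta)=\sum_{\sigma\in\Delta(n)}F^{\perp}_I(\sigma)$. So the condition ``$x\wedge y=0$ for all $y\in\sum_{\sigma}F^{\perp}_I(\sigma)$'' is precisely ``$x\wedge y=0$ for all $y\in F^{\perp}_I(\Delta)$'', which by Lemma \ref{WDelta-lem} is equivalent to $x\in N_I^k(\Delta)$. Chaining these equivalences gives $\bigcap_{\sigma\in\Delta(n)}N_I^k(\sigma)=N_I^k(\Delta)$, which is \eqref{capNIk-eqn}. One should note that this argument is uniform in the two cases $|I_\Delta|\leq k$ and $|I_\Delta|>k$, because Lemma \ref{WDelta-lem} and Lemma \ref{Wsigma-lem} (cf. the remark following the latter) remain valid when the subspace in question is $0$; when $|I_\Delta|>k$ one gets $N_I^k(\Delta)=0$ directly, and the intersection is forced to be $0$ as well.

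For the final assertion, suppose $I\in S_\Delta\setminus S_k(\Delta)$. By Lemma \ref{SDelta-lem}(2), $I\in S_k(\Delta)$ is equivalent to $\dim F^{\perp}_I(\Delta)\leq k$, so $I\notin S_k(\Delta)$ means $\dim F^{\perp}_I(\Delta)=|I_\Delta|>k$. Then by the definition of $N_I^k(\Delta)$ we have $N_I^k(\Delta)=0$, and \eqref{capNIk-eqn} yields $\bigcap_{\sigma\in\Delta(n)}N_I^k(\sigma)=0$. The only genuinely non-trivial input here is Proposition \ref{PFS-lem}(2) (the normal space of a face of $P_\Delta$ decomposes as the sum of the normal spaces of the corresponding faces of the maximal-cone polyhedra $P_\sigma$), together with the elementary but essential observation that $x\mapsto(y\mapsto x\wedge y)$ is linear, so that annihilating a spanning set is the same as annihilating the span; everything else is bookkeeping with the definitions. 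I expect no serious obstacle — the work has effectively been done in Section \ref{polytope-sect} and the two wedge-characterization lemmas — beyond making sure the degenerate case $|I_\Delta|>k$ is handled cleanly, which the remark after Lemma \ref{Wsigma-lem} already licenses.
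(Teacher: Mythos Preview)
Your proof is correct and follows essentially the same approach as the paper: both arguments use the wedge-annihilator characterizations (Lemmas \ref{WDelta-lem} and \ref{Wsigma-lem}) to translate the intersection into an annihilation condition, with the only cosmetic difference being that the paper works with the generating sets $E_I(\sigma)$ and the identity $E_I(\Delta)=\bigcup_{\sigma}E_I(\sigma)$ directly, whereas you pass through the spans $F^{\perp}_I(\sigma)$ and invoke Proposition \ref{PFS-lem}(2) (which is itself proved from that same identity).
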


\begin{proof}
Since $|\Delta|=\bigcup_{\sigma\in\Delta(n)}\sigma$, we have 
\begin{equation*}
E_I(\Delta)=\bigcup_{\sigma\in\Delta(n)} E_I(\sigma).
\end{equation*}
By Lemma \ref{Wsigma-lem}, $x\in N_I^k(\sigma)$ if and only if 
$x\wedge e(\alpha_t)=0$ for all $e(\alpha_t)\in E_I(\sigma)$.
 As a consequence, 
$$x\in\bigcap_{\sigma\in\Delta(n)}N_I^k(\sigma)$$
 if and only if
$$x\wedge e(\alpha_t)=0$$
 for all $e(\alpha_t)\in \bigcup_{\sigma\in\Delta(n)}E_I(\sigma)=E_I(\Delta)$.
 By Lemma \ref{WDelta-lem},  
 $$x\wedge e(\alpha_t)=0$$ for all $e(\alpha_t)\in E_I(\Delta)$ if and only if 
 $$x\in N_I^k(\Delta).$$
 Therefore, we have
 \begin{equation}\label{NIk-lem-eqn1}
 \bigcap_{\sigma\in\Delta(n)}N_I^k(\sigma)=N_I^k(\Delta).
 \end{equation}
 For any $I\in S_\Delta\backslash S_k(\Delta)$, we have $|I_\Delta|>k$, 
 which implies $N_I^k(\Delta)=0$. By Equation \eqref{NIk-lem-eqn1},
 we have $$\bigcap_{\sigma\in\Delta(n)}N_I^k(\sigma)=0$$
  for all $I\in S_\Delta\backslash S_k(\Delta)$.
\end{proof}

\begin{remark}
By Proposition \ref{NIk-lem},
$N_I^k(\Delta)$ is a subspace of $N_I^k(\sigma)$ for any $\sigma\in\Delta(n)$.
\end{remark}

\subsection{The vector spaces $V_I^k(\Delta)$ and $V_I^k(\sigma)$}
\subsubsection{The vector space $V_I^k(\sigma)$}
Let $\sigma$ be a smooth cone of dimension $n$ in $N_\R$.
Let $U_{\sigma}\cong\C^n$ be the affine variety associated with $\sigma$. 

Let 
\begin{equation*}
W(\sigma)=\{w|_{U_\sigma}\mid w\in W\},
\end{equation*}
where $w|_{U_\sigma}$ is the restriction of the holomorphic vector field $w$ 
on $U_\sigma$.
Let
\begin{equation*}
W^k(\sigma)=\{w|_{U_\sigma}\mid w\in W^k\}.
\end{equation*}
Then $W^k(\sigma)$ is a subspace of $H^0(U_\sigma,\wedge^kT_{U_\sigma})$.
And we have $W^k(\sigma)=\wedge^k W(\sigma)$ $(0\leq k\leq n)$.

For any $I\in S_\sigma$, let
\begin{equation*}
\mathcal{V}_I(\sigma)=\rho(\cE_I(\sigma))|_{U_\sigma}\in W^{|I_\sigma|}(\sigma).
\end{equation*}
And let
\begin{equation}\label{WIkSigma-eqn}
W_I^k(\sigma)=\rho(N_I^k(\sigma))|_{U_\sigma}=
\begin{cases}
\C\cdot\mathcal{V}_I(\sigma)\wedge  W^{k-|I_\sigma|}(\sigma)
\quad&\text{for}~ |I_\sigma|\leq k,\\
0\quad &\text{for}~|I_\sigma|>k,
\end{cases}
\end{equation}
for all $0\leq k\leq n$. 
Then $W_I^k(\sigma)$ is a subspace of $W^k(\sigma)$.

If $\sigma$ is a smooth cone of dimension $n$ in $N_\R$, then $E(\sigma)$ is a $\Z$-basis of $N$, and we have $U_{\sigma}\cong\C^n$. 
By Lemma $5.1$ in \cite{Hong 19}, we have
\begin{lemma}\label{VIksigma-lem1}
Let $\sigma$ be a smooth cone of dimension $n$ in $N_\R$.
Let $v=\chi^I\cdot w$ be a $k$-vector field on $U_\sigma$,
where $I\in M$ and $w\in W^k(\sigma)~ (w\neq 0)$. 
Then $v=\chi^I\cdot w$ is holomorphic on $U_\sigma$ 
if and only if 
\begin{equation*}
I\in S_k(\sigma)\quad\text{and}\quad w\in W_I^k(\sigma).
\end{equation*}
\end{lemma}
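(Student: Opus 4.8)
The plan is to pass to explicit coordinates on $U_\sigma$ and read off holomorphy term by term. Since $\sigma$ is a smooth cone of dimension $n$, the set $E(\sigma)=\{e_1(\sigma),\dots,e_n(\sigma)\}$ is a $\Z$-basis of $N$; let $\{m_1,\dots,m_n\}\subset M$ be the dual basis, so that $z_t:=\chi^{m_t}$ $(1\le t\le n)$ are global holomorphic coordinates identifying $U_\sigma$ with $\C^n$. Under these coordinates $v_t:=\rho(e_t(\sigma))|_{U_\sigma}$ is the Euler field $z_t\,\partial/\partial z_t$, and for $I\in M$ one has $\chi^I=\prod_{t=1}^{n}z_t^{a_t}$ with $a_t:=\langle I,e_t(\sigma)\rangle\in\Z$. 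Writing $v_J:=v_{j_1}\wedge\cdots\wedge v_{j_k}$ for $J=\{j_1<\cdots<j_k\}$, the $v_J$ form a basis of $W^k(\sigma)=\wedge^kW(\sigma)$, so $w=\sum_{|J|=k}c_Jv_J$ with not all $c_J$ zero, and a direct computation gives
\begin{equation*}
\chi^I\cdot v_J=\Big(\prod_{t\notin J}z_t^{a_t}\Big)\Big(\prod_{t\in J}z_t^{a_t+1}\Big)\;\frac{\partial}{\partial z_{j_1}}\wedge\cdots\wedge\frac{\partial}{\partial z_{j_k}}.
\end{equation*}

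Next I would invoke the freeness of the wedges $\partial/\partial z_{j_1}\wedge\cdots\wedge\partial/\partial z_{j_k}$ as an $\mathcal O_{U_\sigma}$-basis of $\wedge^kT_{U_\sigma}$ (equivalently, over the localization inverting $z_1\cdots z_n$): the coefficient of each such wedge in $\chi^I\cdot w$ is a fixed monomial times $c_J$, so $\chi^I\cdot w$ is holomorphic on $U_\sigma\cong\C^n$ if and only if every term $c_J\,\chi^I\cdot v_J$ with $c_J\neq0$ is holomorphic, i.e. if and only if for every $J$ in the support of $w$ one has $a_t\ge0$ for $t\notin J$ and $a_t\ge-1$ for $t\in J$. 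From here the two asserted conditions drop out: if some $a_{t_0}\le-2$ this fails for every $J$, so holomorphy forces $a_t\ge-1$ for all $t$, i.e. $I\in S_\sigma$; and then, with $E_I(\sigma)=\{e_t(\sigma):a_t=-1\}$, the requirement becomes that the index set of $E_I(\sigma)$ be contained in every $J$ in the support of $w$. Such a $J$ of size $k$ exists only if $\#E_I(\sigma)\le k$, which by Lemma \ref{Ssigma-lem} is exactly $I\in S_k(\sigma)$; and writing $E_I(\sigma)=\{e_{s_1}(\sigma),\dots,e_{s_j}(\sigma)\}$ with $j=|I_\sigma|$, the span of those $v_J$ with $\{s_1,\dots,s_j\}\subseteq J$ is precisely $\C\cdot\mathcal{V}_I(\sigma)\wedge W^{k-|I_\sigma|}(\sigma)=W_I^k(\sigma)$ by \eqref{WIkSigma-eqn}. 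Thus holomorphy of $\chi^I\cdot w$ is equivalent to $I\in S_k(\sigma)$ and $w\in W_I^k(\sigma)$.

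The one point that needs care is the ``term by term'' reduction --- that poles cannot cancel between summands $c_J\,\chi^I\cdot v_J$ attached to different index sets $J$; this is immediate from the module-basis property above but should be stated, since without it the equivalence could fail. Everything else is bookkeeping, together with the translation dictionary furnished by Lemma \ref{Ssigma-lem} and the definition \eqref{WIkSigma-eqn} of $W_I^k(\sigma)$. Alternatively, the statement is exactly the affine local model of Lemma $5.1$ in \cite{Hong 19} and may simply be cited from there.
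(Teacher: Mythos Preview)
Your argument is correct. The paper does not give an independent proof of this lemma; it simply invokes Lemma~5.1 of \cite{Hong 19}, which is precisely the alternative you mention in your last sentence. Your explicit coordinate computation on $U_\sigma\cong\C^n$ is exactly the content of that cited lemma, so you have in effect reproduced the proof that the paper outsources. The one subtlety you flag --- that poles cannot cancel between terms indexed by different $J$ --- is handled correctly by your observation that the $\partial/\partial z_{j_1}\wedge\cdots\wedge\partial/\partial z_{j_k}$ form a free $\mathcal O_{U_\sigma}$-basis of $\wedge^kT_{U_\sigma}$.
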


Let
\begin{equation*}
V_I^k(\sigma)=\chi^I\cdot W_I^k(\sigma)=\{\chi^I\cdot w\mid w\in W^k(\sigma)\}.
\end{equation*}
By Lemma \ref{VIksigma-lem1}, for any $I\in S_k(\sigma)$,
elements in $V_I^k(\sigma)$ are holomorphic $k$-vector fields on $X$.
Hence $V_I^k(\sigma)$ can be considered a subspace of $H^0(U_\sigma,\wedge^kT_{U_\sigma})$ for any $I\in S_k(\sigma)$.

For a smooth cone $\sigma$ of dimension $n$ in $N_{\R}$, 
the $T$-action on $U_\sigma$ induces a $T$-action on 
$H^0(U_\sigma,\wedge^kT_{U_\sigma})$.
We denote by $V_I^k(U_\sigma)$ the weight space corresponding to the character $I\in M$. 

The following lemma also comes from Lemma $5.1$ in \cite{Hong 19}.
\begin{lemma}\label{VIksigma-lem}
Let $\sigma$ be a smooth cone of dimension $n$ in $N_{\R}$. 
Let $U_\sigma\cong\C^n$ be the affine toric variety associated with the cone $\sigma$.
Then we have
\begin{equation*}
V_{-I}^k(U_\sigma)=
\begin{cases}
V_I^k(\sigma)\quad &\text{for all}\quad I\in S_k(\sigma),\\
0\quad &\text{for all}\quad I\in M\backslash S_k(\sigma).
\end{cases}
\end{equation*}
\end{lemma}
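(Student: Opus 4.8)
The plan is to reduce everything to the explicit coordinate model $U_\sigma\cong\C^n$ provided by the $\Z$-basis $E(\sigma)=\{e_1(\sigma),\dots,e_n(\sigma)\}$ and then invoke Lemma \ref{VIksigma-lem1}, which already governs precisely when a $k$-vector field of the form $\chi^I\cdot w$ with $w\in W^k(\sigma)$ extends holomorphically over $U_\sigma$. First I would recall that, just as in Lemma \ref{VIk-lem} and Lemma \ref{VIk-lem1} applied to the affine toric variety $U_\sigma$ (whose open dense torus is $T$), any element of the weight space $V_{-I}^k(U_\sigma)$ restricts on $T$ to $\chi^I\cdot w|_T$ for a unique $w\in W^k(\sigma)=\wedge^k W(\sigma)$, and conversely a $k$-vector field $\chi^I\cdot w$ that is holomorphic on $U_\sigma$ lies in $V_{-I}^k(U_\sigma)$. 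This sets up a bijection between $V_{-I}^k(U_\sigma)$ and the set of pairs $(I,w)$ with $w\in W^k(\sigma)$ such that $\chi^I\cdot w$ is holomorphic on $U_\sigma$.

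Next I would feed this through Lemma \ref{VIksigma-lem1}: for $w\neq 0$, $\chi^I\cdot w$ is holomorphic on $U_\sigma$ if and only if $I\in S_k(\sigma)$ and $w\in W_I^k(\sigma)$. Therefore, if $I\in S_k(\sigma)$, the map $w\mapsto\chi^I\cdot w$ identifies $W_I^k(\sigma)$ with $V_{-I}^k(U_\sigma)$; but by definition $V_I^k(\sigma)=\chi^I\cdot W_I^k(\sigma)$, so $V_{-I}^k(U_\sigma)=V_I^k(\sigma)$. If instead $I\notin S_k(\sigma)$, then for every nonzero $w\in W^k(\sigma)$ the field $\chi^I\cdot w$ fails to be holomorphic on $U_\sigma$, so no nonzero element of $V_{-I}^k(U_\sigma)$ can have weight $-I$ of this shape; combined with the fact (from the torus decomposition) that every weight vector does have this shape, we conclude $V_{-I}^k(U_\sigma)=0$. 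A small point to address separately is the case $w=0$, i.e.\ the zero vector field, which lies in every weight space trivially and does not affect the identification.

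The one genuine subtlety — and the step I expect to require the most care — is making sure the decomposition $H^0(U_\sigma,\wedge^kT_{U_\sigma})=\bigoplus_{I\in M}V_I^k(U_\sigma)$ into weight spaces is legitimate on the \emph{noncompact} affine variety $U_\sigma\cong\C^n$, and that each weight vector restricted to $T$ really is of the form $\chi^I\cdot w$ with $w\in W^k(\sigma)$. Here $H^0(U_\sigma,\wedge^kT_{U_\sigma})$ is infinite-dimensional, so one cannot simply quote finite-dimensional representation theory; instead one uses that $\C[U_\sigma]=\C[S_\sigma\cap M']$ is $M$-graded (where $S_\sigma$ is the relevant semigroup) and that $\wedge^kT_{U_\sigma}$ is a free $\C[U_\sigma]$-module with an explicit $T$-equivariant basis $\partial_{i_1}\wedge\cdots\wedge\partial_{i_k}$ coming from $E(\sigma)$, so that every global section decomposes uniquely into weight components and each weight component, being $T$-invariant after twisting by $\chi^{-I}$ hence equal to its value propagated from $T$, is exactly $\chi^I\cdot w$ for a unique $w\in W^k(\sigma)$. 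Once this bookkeeping is in place, the argument above closes. Since this is essentially the content of Lemma $5.1$ in \cite{Hong 19}, I would simply cite it for the affine model and present the above as the short deduction.
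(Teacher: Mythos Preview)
Your proposal is correct and aligns with the paper's approach: the paper does not give an independent proof of this lemma but simply records it as a restatement of Lemma~5.1 in \cite{Hong 19}, exactly as you suggest in your final sentence. Your intermediate deduction from Lemma~\ref{VIksigma-lem1} together with the affine analogues of Lemmas~\ref{VIk-lem} and~\ref{VIk-lem1} is valid and merely unpacks what that citation contains; the care you take with the weight decomposition on the noncompact $U_\sigma\cong\C^n$ via the $M$-grading of $\C[U_\sigma]$ and the equivariant free basis of $\wedge^kT_{U_\sigma}$ is precisely the point behind that lemma, so nothing is missing.
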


\subsubsection{The vector space $V_I^k(\Delta)$}
\begin{lemma}\label{weightspace-lem1}
Let $X=X_\Delta$ be a compact smooth toric variety.
Let $v=\chi^I\cdot\rho(x)$ be a $k$-vector field on $X$,
where $I\in M$ and $x\in\wedge^k N_\C ~(x\neq 0)$. 
Then $v$ is holomorphic on $X$ if and if 
\begin{equation*}
x\in N_I^k(\Delta) \quad \text{and}
\quad I\in\bigcap_{\sigma\in\Delta(n)}S_k(\sigma).
\end{equation*}
\end{lemma}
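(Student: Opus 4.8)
The plan is to reduce the global statement on $X$ to the local statements on the affine charts $U_\sigma$ that have already been established in Lemma \ref{VIksigma-lem1} and Lemma \ref{VIksigma-lem}, and then to patch these local conditions together using Proposition \ref{NIk-lem} and Proposition \ref{PFS-lem}. The key observation is that a meromorphic $k$-vector field $v$ on $X$ is holomorphic if and only if its restriction $v|_{U_\sigma}$ is holomorphic on $U_\sigma$ for every $\sigma\in\Delta(n)$, since $X=\bigcup_{\sigma\in\Delta(n)}U_\sigma$. This is the mechanism that converts an intersection over top-dimensional cones into the hypothesis in the lemma.

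First I would treat the direction ``$\Longleftarrow$''. Assume $x\in N_I^k(\Delta)$ and $I\in\bigcap_{\sigma\in\Delta(n)}S_k(\sigma)$. Fix $\sigma\in\Delta(n)$. By the Remark following Proposition \ref{NIk-lem}, $N_I^k(\Delta)$ is a subspace of $N_I^k(\sigma)$, so $x\in N_I^k(\sigma)$; writing $x$ according to the definition of $N_I^k(\sigma)$ we get $\rho(x)|_{U_\sigma}\in W_I^k(\sigma)$, and since $I\in S_k(\sigma)$ Lemma \ref{VIksigma-lem1} gives that $v|_{U_\sigma}=\chi^I\cdot\rho(x)|_{U_\sigma}$ is holomorphic on $U_\sigma$. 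As this holds for every $\sigma\in\Delta(n)$ and these charts cover $X$, $v$ is holomorphic on $X$. (One should note here the convention introduced after Lemma \ref{VIk-lem}: ``holomorphic on $X$'' for $v=\chi^I\cdot w$ means the meromorphic expression has removable singularities everywhere.)

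Next I would do ``$\Longrightarrow$''. Suppose $v=\chi^I\cdot\rho(x)$ is holomorphic on $X$, with $x\ne 0$. Then for each $\sigma\in\Delta(n)$, $v|_{U_\sigma}$ is holomorphic on $U_\sigma$; writing $w=\rho(x)|_{U_\sigma}\in W^k(\sigma)$, which is nonzero because $\wedge^kN_\C\xrightarrow{\rho}W^k$ is an isomorphism and restriction to $U_\sigma$ is injective on $W^k$ (as $T\subseteq U_\sigma$ is dense), Lemma \ref{VIksigma-lem1} yields $I\in S_k(\sigma)$ and $w\in W_I^k(\sigma)$, i.e. $x\in N_I^k(\sigma)$. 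Since this holds for all $\sigma\in\Delta(n)$, we get $I\in\bigcap_{\sigma\in\Delta(n)}S_k(\sigma)$ and $x\in\bigcap_{\sigma\in\Delta(n)}N_I^k(\sigma)$. Now I would invoke Proposition \ref{NIk-lem}: the hypothesis $I\in S_\Delta$ is needed to apply it, and this must be checked. In fact $x\in N_I^k(\sigma)$ for some $\sigma$ with $x\ne 0$ forces $|I_\sigma|\le k\le n$, so $I\in S_\sigma$ for all $\sigma$, hence $I\in\bigcap_{\sigma\in\Delta(n)}S_\sigma=S_\Delta$ by Proposition \ref{PFS-lem}(1). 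Then Proposition \ref{NIk-lem} gives $\bigcap_{\sigma\in\Delta(n)}N_I^k(\sigma)=N_I^k(\Delta)$, so $x\in N_I^k(\Delta)$, as desired.

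The main obstacle, and the only place that requires care rather than bookkeeping, is the passage from ``$v$ holomorphic on $X$'' to ``$v|_{U_\sigma}$ holomorphic on $U_\sigma$ for each top-dimensional $\sigma$'' in a way that is compatible with the paper's nonstandard ``removable singularity'' convention — that is, making sure that the local notion of holomorphicity used in Lemma \ref{VIksigma-lem1} agrees with the global one after restriction. This should be immediate once one notes $X=\bigcup_{\sigma\in\Delta(n)}U_\sigma$ and that restriction of a holomorphic (resp. meromorphic) object stays holomorphic (resp. meromorphic); the subtlety is purely that both sides are phrased via the meromorphic expression $\chi^I\cdot w$, so I would state explicitly that $v$ is holomorphic on $X$ iff it is holomorphic on each $U_\sigma$, and then the two directions above go through. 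A secondary point to get right is the nonvanishing of $w=\rho(x)|_{U_\sigma}$, needed to apply Lemma \ref{VIksigma-lem1}, which follows from density of $T$ in $U_\sigma$ together with the isomorphism $\wedge^kN_\C\cong W^k$.
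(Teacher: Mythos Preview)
Your proof is correct and follows essentially the same route as the paper: cover $X$ by the affine charts $U_\sigma$ for $\sigma\in\Delta(n)$, apply Lemma~\ref{VIksigma-lem1} on each chart to get the conditions $I\in S_k(\sigma)$ and $x\in N_I^k(\sigma)$, and then intersect over all $\sigma$. In fact you are slightly more careful than the paper: the paper's proof stops at the condition $x\in\bigcap_{\sigma\in\Delta(n)}N_I^k(\sigma)$ without explicitly invoking Proposition~\ref{NIk-lem} to rewrite this as $x\in N_I^k(\Delta)$, whereas you spell out that $I\in\bigcap_\sigma S_k(\sigma)\subseteq\bigcap_\sigma S_\sigma=S_\Delta$ so that Proposition~\ref{NIk-lem} applies, and you also justify why $\rho(x)|_{U_\sigma}\ne 0$ is needed to use Lemma~\ref{VIksigma-lem1}.
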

\begin{proof}
Since $X=\cup_{\sigma\in\Delta(n)}U_{\sigma}$, $v=\chi^I\cdot\rho(x)$ 
is holomorphic on $X$ if and only if $v=\chi^I\cdot\rho(x)$ is holomorphic on 
$U_\sigma$ for all $\sigma\in\Delta(n)$. 
By Lemma \ref{VIksigma-lem1},
$v=\chi^I\cdot\rho(x)$ is holomorphic on $U_\sigma$ if and only if
$I\in S_k(\sigma)$ and $\rho(x)\in W_I^k(\sigma)$. 
However, by Equation \eqref{WIkSigma-eqn}, we have
$$\rho(x)\in W_I^k(\sigma)\Longleftrightarrow x\in N_I^k(\sigma).$$
Thus $v$ is holomorphic on $X$ if and if 
\begin{equation*}
x\in \bigcap_{\sigma\in\Delta(n)}N_I^k(\sigma) \quad \text{and}
\quad I\in\bigcap_{\sigma\in\Delta(n)}S_k(\sigma).
\end{equation*}
\end{proof}

Let $X=X_\Delta$ be a smooth compact toric variety of dimension $n$. 
Let
\begin{equation*}
\mathcal{V}_I(\Delta)=\rho(\cE_I(\Delta))\in W^{|I_\Delta|}
\end{equation*}
for $I\in S_\Delta$.
Let $W_I^k(\Delta)$ ($0\leq k\leq n$) be defined by
\begin{equation*}
W_I^k(\Delta)=\rho(N_I^k(\Delta))=
\begin{cases}
\C\cdot\mathcal{V}_I(\Delta)\wedge  W^{k-|I_\Delta|}\quad &\text{for}~ |I_\Delta|\leq k,\\
0\quad &\text{for}~|I_\Delta|>k.
\end{cases}
\end{equation*}
Then $W_I^k(\Delta)$ is a subspace of $W^k$.
Let
\begin{equation*}
V_I^k(\Delta)=\chi^I\cdot W_I^k(\Delta)=\{\chi^I\cdot w\mid w\in W_I^k(\Delta)\}.
\end{equation*}
The elements in $V_I^k(\Delta)$ are considered as meromorphic $k$-vector fields on $X=X_\Delta$.

\begin{lemma}\label{weightspace-lem2}
Let $X=X_\Delta$ be a smooth compact toric variety of dimension $n$. For all
$I\in S_k(\Delta)$, any $k$-vector field in $V_I^k(\Delta)$ is holomorphic on $X$.
\end{lemma}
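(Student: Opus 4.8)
The plan is to deduce Lemma \ref{weightspace-lem2} directly from Lemma \ref{weightspace-lem1} together with Proposition \ref{PFS-lem}(3) and Proposition \ref{NIk-lem}. The point is that Lemma \ref{weightspace-lem1} gives a clean criterion for when a $k$-vector field of the form $v=\chi^I\cdot\rho(x)$ is holomorphic on $X$: one needs $x\in N_I^k(\Delta)$ and $I\in\bigcap_{\sigma\in\Delta(n)}S_k(\sigma)$. So the task reduces to checking that both conditions hold automatically for every element of $V_I^k(\Delta)$ when $I\in S_k(\Delta)$.

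First I would fix $I\in S_k(\Delta)$ and an arbitrary nonzero element $v\in V_I^k(\Delta)$. By the definition of $V_I^k(\Delta)=\chi^I\cdot W_I^k(\Delta)$ and of $W_I^k(\Delta)=\rho(N_I^k(\Delta))$, we may write $v=\chi^I\cdot\rho(x)$ for some $x\in N_I^k(\Delta)$. (If $v=0$ there is nothing to prove, since the zero $k$-vector field is trivially holomorphic; one should mention this so the $x\neq 0$ hypothesis of Lemma \ref{weightspace-lem1} is met.) Thus the first condition of Lemma \ref{weightspace-lem1}, namely $x\in N_I^k(\Delta)$, holds by construction.

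Next I would verify the second condition, $I\in\bigcap_{\sigma\in\Delta(n)}S_k(\sigma)$. This is exactly the content of Proposition \ref{PFS-lem}(3): it states $S_k(\Delta)\subseteq\bigcap_{\sigma\in\Delta(n)}S_k(\sigma)$, and since we assumed $I\in S_k(\Delta)$, we get $I\in S_k(\sigma)$ for every $\sigma\in\Delta(n)$, hence $I\in\bigcap_{\sigma\in\Delta(n)}S_k(\sigma)$. With both hypotheses of Lemma \ref{weightspace-lem1} in place, that lemma immediately yields that $v=\chi^I\cdot\rho(x)$ is holomorphic on $X$, completing the argument.

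There is essentially no hard part here — the lemma is a formal consequence of results already established — but the one point deserving care is the interaction with the paper's convention about "moveable singularities": elements of $V_I^k(\Delta)$ are a priori only meromorphic $k$-vector fields on $X$, and "holomorphic on $X$" must be understood in the sense fixed just after Lemma \ref{VIk-lem}, i.e., that $\chi^I\cdot w$ represents a genuine holomorphic section after removing removable singularities. I would make sure the statement derived from Lemma \ref{weightspace-lem1} is read in precisely that sense, so that the conclusion of Lemma \ref{weightspace-lem2} is literally "$V_I^k(\Delta)\subseteq H^0(X,\wedge^kT_X)$" for $I\in S_k(\Delta)$. It is also worth remarking, as a consistency check, that Proposition \ref{NIk-lem} tells us $N_I^k(\Delta)=\bigcap_{\sigma\in\Delta(n)}N_I^k(\sigma)$, so the condition $x\in N_I^k(\Delta)$ is exactly what is needed for $\rho(x)|_{U_\sigma}\in W_I^k(\sigma)$ on every chart; this is implicitly used inside Lemma \ref{weightspace-lem1} but confirms there is no gap between the local and global pictures.
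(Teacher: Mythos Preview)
Your proof is correct and follows essentially the same route as the paper: write $v=\chi^I\cdot\rho(x)$ with $x\in N_I^k(\Delta)$, invoke Proposition~\ref{PFS-lem}(3) to get $I\in\bigcap_{\sigma\in\Delta(n)}S_k(\sigma)$, and conclude via Lemma~\ref{weightspace-lem1} (with Proposition~\ref{NIk-lem} confirming the local-to-global identification of $N_I^k(\Delta)$). Your extra remarks on the $v=0$ case and the removable-singularity convention are harmless clarifications.
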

\begin{proof}
For any $k$-vector field $v=\chi^I\cdot w\in V_I^k(\Delta)$, 
there exists $x\in N_I^k(\Delta)$, 
such that $w=\rho(x)$. By Proposition \ref{NIk-lem}, $x\in N_I^k(\Delta)$ implies
$x\in \bigcap_{\sigma\in\Delta(n)}N_I^k(\sigma)$.
By Proposition \ref{PFS-lem}, $I\in S_k(\Delta)$ implies
$I\in\bigcap_{\sigma\in\Delta(n)}S_k(\sigma)$.
By Lemma \ref{weightspace-lem1}, $v$ is holomorphic on $X$.
\end{proof}

By Lemma \ref{weightspace-lem2}, $V_I^k(\Delta)$ can be considered as a subspace of 
$H^0(X,\wedge^k T_X)$. 

\begin{proposition}\label{weightspace-lem} 
Let $X=X_\Delta$ be a smooth compact toric variety of dimension $n$. 
Let $V_{I}^k$ be the weight space corresponding to the character $I\in M$ for 
the $T$-action on $H^0(X,\wedge^k T_X)$. 
\begin{enumerate}
\item
We have
\begin{equation*}
V_{-I}^k=
\begin{cases}
V_I^k(\Delta)\quad &\text{for all}\quad I\in S_k(\Delta),\\
0\quad &\text{for all}\quad I\in M\backslash S_k(\Delta).
\end{cases}
\end{equation*}
\item
For any $I\in S_k(\Delta)$, we have
\begin{equation*}
\dim V_I^k(\Delta)={{n-|I_\Delta|}\choose{k-|I_\Delta|}}.
\end{equation*}
\end{enumerate}
\end{proposition}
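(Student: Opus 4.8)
The plan is to prove the two parts together, using the weight-space decomposition \eqref{VIk-eqn} together with the local analysis over the affine charts $U_\sigma$ already assembled in the previous subsections. First I would establish the set-theoretic equality $V_{-I}^k = V_I^k(\Delta)$ for $I \in S_k(\Delta)$. By Lemma \ref{weightspace-lem2}, every element of $V_I^k(\Delta)$ is a genuine holomorphic $k$-vector field on $X$, and by the computation \eqref{VIklem-eqn3} (the argument of Lemma \ref{VIk-lem1}) each such $\chi^I \cdot w$ transforms under $t \in T$ by the character $\chi^{-I}$; hence $V_I^k(\Delta) \subseteq V_{-I}^k$. For the reverse inclusion, take $v \in V_{-I}^k$. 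By Lemma \ref{VIk-lem}(2) there is a unique $w \in W^k$, say $w = \rho(x)$ with $x \in \wedge^k N_\C$, such that $v|_T = \chi^I \cdot w|_T$; since $T$ is dense in $X$, $v = \chi^I \cdot \rho(x)$ as a meromorphic $k$-vector field, and it is holomorphic on $X$ because $v$ was. Now Lemma \ref{weightspace-lem1} forces $x \in N_I^k(\Delta)$ and $I \in \bigcap_{\sigma} S_k(\sigma)$; in particular $w = \rho(x) \in \rho(N_I^k(\Delta)) = W_I^k(\Delta)$, so $v = \chi^I \cdot w \in V_I^k(\Delta)$. This gives $V_{-I}^k = V_I^k(\Delta)$.

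Next I would dispose of the vanishing claim: for $I \in M \setminus S_k(\Delta)$, I must show $V_{-I}^k = 0$. If $V_{-I}^k \ne 0$, pick a nonzero $v \in V_{-I}^k$; as above $v = \chi^I \cdot \rho(x)$ with $x \ne 0$ holomorphic on $X$, so Lemma \ref{weightspace-lem1} yields $x \in N_I^k(\Delta) \ne 0$ and $I \in \bigcap_\sigma S_k(\sigma)$. But $N_I^k(\Delta) \ne 0$ means, by the definition of $N_I^k(\Delta)$, that $|I_\Delta| \le k$; and by Lemma \ref{SDelta-lem}(2) this is exactly the condition $I \in S_k(\Delta)$ (recall $I \in S_\Delta$ automatically once some holomorphic $\chi^I \cdot w$ exists, since $S_\Delta = \bigcap_\sigma S_\sigma$ by Proposition \ref{PFS-lem}(1) and local holomorphy forces $I \in S_\sigma$ for each $\sigma$). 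This contradicts $I \notin S_k(\Delta)$, proving part (1).

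For part (2), fix $I \in S_k(\Delta)$ and set $i = |I_\Delta|$, so $0 \le i \le k$. The map $w \mapsto \chi^I \cdot w$ from $W_I^k(\Delta)$ to $V_I^k(\Delta)$ is a linear bijection (it is invertible on $T$ via multiplication by $\chi^{-I}$, and both sides consist of global sections), and $W_I^k(\Delta) = \rho(N_I^k(\Delta))$ with $\rho \colon \wedge^k N_\C \to W^k$ an isomorphism; hence $\dim V_I^k(\Delta) = \dim N_I^k(\Delta)$. By the definition of $N_I^k(\Delta) = \C \cdot \mathcal{E}_I(\Delta) \wedge (\wedge^{k-i} N_\C)$, where $\mathcal{E}_I(\Delta)$ is a fixed decomposable element spanning the line $\wedge^i F^\perp_I(\Delta)$, wedging with $\mathcal{E}_I(\Delta)$ identifies $\wedge^{k-i} N_\C$ modulo the ideal generated by $F^\perp_I(\Delta)$ with $N_I^k(\Delta)$; concretely, choosing a basis $\{e(\alpha_{s_1}), \dots, e(\alpha_{s_i}), f_1, \dots, f_{n-i}\}$ of $N_\C$ adapted to $F^\perp_I(\Delta)$, a basis of $N_I^k(\Delta)$ is given by $\mathcal{E}_I(\Delta) \wedge f_{j_1} \wedge \cdots \wedge f_{j_{k-i}}$ over all $1 \le j_1 < \cdots < j_{k-i} \le n-i$. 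There are $\binom{n-i}{k-i}$ such elements, so $\dim V_I^k(\Delta) = \binom{n-|I_\Delta|}{k-|I_\Delta|}$.

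I expect the main obstacle to be bookkeeping rather than conceptual: making sure that the passage from "meromorphic on $X$" to "holomorphic on $X$" is handled consistently with the paper's convention about movable singularities, and verifying cleanly that the hypothesis $I \in S_k(\Delta)$ (as opposed to the a priori weaker $I \in \bigcap_\sigma S_k(\sigma)$) is exactly what is needed — this is where Proposition \ref{PFS-lem}(3) and Proposition \ref{NIk-lem} do the real work, upgrading the local chart conditions to the global statement. Everything else is a linear-algebra count.
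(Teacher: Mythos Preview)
Your proposal is correct and follows essentially the same route as the paper's proof: both directions of part~(1) are obtained by combining Lemma~\ref{VIk-lem}/\ref{VIk-lem1} with Lemma~\ref{weightspace-lem1}, and the dimension count in part~(2) is the same adapted-basis argument. The only cosmetic difference is that you package the vanishing case as a single contradiction argument (using $N_I^k(\Delta)\neq 0 \Rightarrow |I_\Delta|\le k$ after first securing $I\in S_\Delta$), whereas the paper splits explicitly into the two subcases $I\notin\bigcap_\sigma S_k(\sigma)$ and $I\in\bigl(\bigcap_\sigma S_k(\sigma)\bigr)\setminus S_k(\Delta)$ and invokes Proposition~\ref{NIk-lem} for the latter; the content is the same.
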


\begin{proof}
\begin{enumerate}
\item

\begin{enumerate}
\item 
We will prove here $V_{-I}^k=V_I^k(\Delta)$ for all $I\in S_k(\Delta)$.

By Lemma \ref{VIk-lem}, any element $v\in V_{-I}^k$ can be written as 
$$v=\chi^I\cdot w,$$ where $w\in W^k$.
And by Lemma \ref{VIk-lem1}, $v=\chi^I\cdot w\in V_{-I}^k$ if and only if 
$v=\chi^I\cdot w$ is holomorphic on $X$.

Suppose that $w=\rho(x)$, where $x\in\wedge^k N_\C$.
By Lemma \ref{weightspace-lem1},  $v$ is holomorphic on $X$ if and only if 
\begin{equation}\label{weightspace-lem-eqn1}
x\in \bigcap_{\sigma\in\Delta(n)}N_I^k(\sigma) \quad \text{and}
\quad I\in\bigcap_{\sigma\in\Delta(n)}S_k(\sigma).
\end{equation}

However, by Proposition \ref{NIk-lem}, we have
$$\bigcap_{\sigma\in\Delta(n)}N_I^k(\sigma)=N_I^k(\Delta)$$
for all $I\in S_k(\Delta)$. 
And by Proposition \ref{PFS-lem}, $I\in S_k(\Delta)$ implies 
$$I\in\bigcap_{\sigma\in\Delta(n)}S_k(\sigma).$$

As a consequence, for $I\in S_k(\Delta)$, 
$v=\chi^I\cdot\rho(x)$ is holomorphic on $X$ if and only if 
$$x\in N_I^k(\Delta),$$
which is equivalent to 
$$v\in V_I^k(\Delta).$$

Hence we have $V_{-I}^k=V_I^k(\Delta)$ for all $I\in S_k(\Delta)$.

\item
We will prove $V_{-I}^k=0$ for all $I\notin S_k(\Delta)$.

By Proposition \ref{PFS-lem}, we have
\begin{equation}\label{weightspace-lem-eqn2.2}
S_k(\Delta)\subseteq\bigcap_{\sigma\in\Delta(n)}S_k(\sigma).
\end{equation}
As a consequence, for $I\notin S_k(\Delta)$, we have
\begin{equation*} 
I\notin\bigcap_{\sigma\in\Delta(n)}S_k(\sigma)\quad\text{or}\quad
I\in\bigcap_{\sigma\in\Delta(n)}S_k(\sigma)\backslash S_k(\Delta).
\end{equation*}
By Lemma \ref{VIk-lem}, any $v\in V_{-I}^k$ can be written as $v=\chi^I\cdot w$, 
where $w\in W^k$.
By Lemma \ref{VIk-lem1}, $v=\chi^I\cdot w\in V_{-I}^k$ if and only if 
$v=\chi^I\cdot w$ is holomorphic on $X$.
Suppose that $w=\rho(x)$, where $x\in\wedge^k N_\C$. Then $v=\chi^I\cdot\rho(x)$.
By Lemma \ref{weightspace-lem1}, 
$v=\chi^I\cdot\rho(x)$ $(v\neq 0)$ is holomorphic on $X$ if and only if 
 \begin{equation}\label{weightspace-lem-eqn2}
x\in \bigcap_{\sigma\in\Delta(n)}N_I^k(\sigma)~(x\neq0)~\quad \text{and}
\quad I\in\bigcap_{\sigma\in\Delta(n)}S_k(\sigma).
\end{equation}

\begin{enumerate}
\item 
For any $I\notin\bigcap_{\sigma\in\Delta(n)}S_k(\sigma)$, 
by Equation \eqref{weightspace-lem-eqn2}, we have
\begin{equation}\label{weightspace-lem-eqn2.1}
V_{-I}^k=0.
\end{equation}

\item
For any 
$I\in(\bigcap_{\sigma\in\Delta(n)}S_k(\sigma))\backslash S_k(\Delta)$, 
by Proposition \ref{PFS-lem}, we have
\begin{equation}\label{weightspace-lem-eqn3}
I\in(\bigcap_{\sigma\in\Delta(n)}S_k(\sigma))\backslash S_k(\Delta)\subseteq (\bigcap_{\sigma\in\Delta(n)}S_\sigma)\backslash S_k(\Delta)=S_\Delta\backslash S_k(\Delta).
\end{equation}
By Proposition \ref{NIk-lem} and Equation \eqref{weightspace-lem-eqn3},  we have
\begin{equation}\label{weightspace-lem-eqn4}
\bigcap_{\sigma\in\Delta(n)}N_I^k(\sigma)=0
\end{equation}
for all $I\in(\bigcap_{\sigma\in\Delta(n)}S_k(\sigma))\backslash S_k(\Delta)$.
By Equation \eqref{weightspace-lem-eqn2} and Equation \eqref{weightspace-lem-eqn4}, 
we have 
\begin{equation}\label{weightspace-lem-eqn5}
V_{-I}^k=0\quad\text{for all}\quad
I\in(\bigcap_{\sigma\in\Delta(n)}S_k(\sigma))\backslash S_k(\Delta).
\end{equation}
\end{enumerate}

By Equation \eqref{weightspace-lem-eqn2.2}, Equation \eqref{weightspace-lem-eqn2.1} and Equation \eqref{weightspace-lem-eqn5}, 
we have $V_{-I}^k=0$ for $I\notin S_k(\Delta)$.

\end{enumerate}

\item
For any  $I\in S_k(\Delta)$, we have $0\leq |I_\Delta|\leq k$ . 
Suppose that $0\leq |I_\Delta|=i\leq k$, and suppose that
 \begin{gather*}
 \cE_I(\Delta)=e(\alpha_{s_1}) \wedge\ldots\wedge e(\alpha_{s_i})\in\wedge^{i} N,
\end{gather*}
where $\{e(\alpha_{s_1}),\ldots,e(\alpha_{s_i})\}$ is a basis of 
$F^{\perp}_I(\Delta)\subseteq N_\C$.

We extend $\{e(\alpha_{s_1}),\ldots,e(\alpha_{s_i})\}$ to be a basis to $N_\C$.
 Suppose that
 $$\{e(\alpha_{s_1}),\ldots,e(\alpha_{s_i}), f_1,\ldots, f_{n-i}\}$$
is a basis of $N_\C$. Then 
$$\{\rho(e(\alpha_{s_1})),\ldots,\rho(e(\alpha_{s_i})),\rho( f_1),\ldots, \rho(f_{n-i})\}$$
is a basis of $W$.
As a consequence,
$$\{\chi^I\cdot\rho(\cE_I(\Delta))\wedge\rho(f_{t_1})\wedge\ldots\rho(f_{t_{k-i}})\mid
1\leq t_1< \ldots<t_{k-i}\leq n-i \}$$
is a basis of the vector space 
$V_I^k(\Delta)=\chi^I\cdot\rho(\cE_I(\Delta))\wedge W^{k-i}$.
 Hence we have
$$\dim V_I^k(\Delta)={{n-i}\choose{k-i}}={{n-|I_\Delta|}\choose{k-|I_\Delta|}}.$$
\end{enumerate}
\end{proof}

\subsection{The proof of  Theorem \nameref{General-multiVect-thm}}
\begin{proof}
\begin{enumerate}
\item
By Equation \eqref{VIk-eqn}, we have
\begin{equation}\label{General-multiVect-thmpf-eqn1}
H^0(X, \wedge^kT_{X})=\bigoplus_{I\in M}V_I^k=\bigoplus_{I\in M}V_{-I}^k
\end{equation}
By Proposition \ref{weightspace-lem}, we have
\begin{equation}\label{General-multiVect-thmpf-eqn2}
V_{-I}^k=
\begin{cases}
V_I^k(\Delta)\quad &\text{for}~ I\in S_k(\Delta),\\
0\quad &\text{for}~I\notin S_k(\Delta).
\end{cases}
\end{equation}
By Equation \eqref{General-multiVect-thmpf-eqn1} and 
Equation \eqref{General-multiVect-thmpf-eqn2}, we have
\begin{equation}\label{H0k-thmpf-eqn}
H^0(X, \wedge^kT_{X})=\bigoplus_{I\in S_k(\Delta)}V_I^k(\Delta)
\end{equation} 
\item
For any $I\in S(\Delta, i)$, $|I_\Delta|=i$, by Proposition \ref{weightspace-lem}, we have
\begin{equation}\label{dimVIk-thmpf-eqn}
\dim V_I^k(\Delta)={{n-i}\choose{k-i}}.
\end{equation}
By Equation \eqref{SDeltai}, we have
\begin{equation}\label{countSD-eqn}
\#S(\Delta,i)=\sum_{F_j\in P_\Delta(n-i)}\# (int(F_j)\cap M).
\end{equation}
By Equation \eqref{dimVIk-thmpf-eqn} and Equation \eqref{countSD-eqn}, we have
\begin{equation}\label{dimVIKSD-eqn}
\sum_{I\in S(\Delta,i)}\dim V_I^k(\Delta)=
\sum_{F_j\in P_{\Delta}( n-i)}{{n-i}\choose{k-i}}\cdot\#(int(F_j)\cap M)
\end{equation}
Since $S_k(\Delta)=\bigcup_{i=0}^k S(\Delta, i),$ by Equation \eqref{H0k-thmpf-eqn}
and Equation \eqref{dimVIKSD-eqn}, we have
\begin{equation*}
\dim H^0(X, \wedge^kT_{X})=\sum_{i=0}^k \sum_{I\in S(\Delta,i)}\dim V_I^k(\Delta)
=\sum_{i=0}^k \sum_{F_j\in P_{\Delta}( n-i)}{{n-i}\choose{k-i}}\cdot\#(int(F_j)\cap M).
\end{equation*}
\end{enumerate}
\end{proof}

\section{Poisson cohomology of holomorphic toric Poisson manifolds}\label{sect-Pocoh}
\subsection{Holomorphic toric Poisson manifolds and the set $S_{\Delta}^{\pi}$} 
Let $X=X_\Delta$ be a compact smooth toric variety. 
Let $\pi$ be a holomorphic toric Poisson structure on $X$, 
and let $\Pi\in\wedge^2 N_\C$ be defined as in Equation \eqref{Pi-eqn}.

We consider the equation
\begin{equation}\label{General-IPi-eqn}
(\imath_{I}\Pi)\wedge \cE_I(\Delta)=0
\end{equation}
for all $I\in S_{\Delta}$. 
For $I\in S_{\Delta}$, Equation \eqref{General-IPi-eqn} is equivalent to 
\begin{equation}\label{General-IPi-eqn1}
\imath_{I}\Pi\in F^{\perp}_I(\Delta)\otimes\C.
\end{equation}

Let
\begin{gather}
S^{\pi}_\Delta=\{I\in S_{\Delta}\mid (\imath_{I}\Pi)\wedge \cE_I(\Delta)=0\},\\
S_k^{\pi}(\Delta)=\{I\in S_k(\Delta)\mid (\imath_{I}\Pi)\wedge \cE_I(\Delta)=0\},\\
S^{\pi}(\Delta,k)=\{I\in S(\Delta,k)\mid (\imath_{I}\Pi)\wedge \cE_I(\Delta)=0\}.
\end{gather}

Then we have
\begin{equation*}
 S^{\pi}_k(\Delta)=\bigcup_{0\leq i\leq k}S^{\pi}(\Delta,i)
 \end{equation*}
  for all $0\leq k\leq n$.  
  Moreover, we have
$$S^{\pi}_0(\Delta)\subseteq S^{\pi}_1(\Delta)\subseteq\ldots\subseteq S^{\pi}_n(\Delta)
=S^{\pi}_{\Delta}.$$

By Lemme \ref{VIk-lem}, we have $S_0(\Delta)=\{0\}$, which implies 
\begin{equation*}
S_0^\pi(\Delta)=\{0\}.
\end{equation*}

For any $I\in S(\Delta,n)$, since $\cE_I(\Delta)\in\wedge^n N_\C$, the equation
$$(\imath_{I}\Pi)\wedge \cE_I(\Delta)=0$$
holds automatically. Thus we have 
\begin{equation*}
S^\pi(\Delta,n)=S(\Delta,n).
\end{equation*}

\subsection{Lemmas for Theorem  \nameref{General-cohomology-thm}}
We give some lemmas here to prove Theorem  \nameref{General-cohomology-thm}.
\begin{lemma}\label{Poisson-Coh-lem1}
Let $X=X_\Delta$ be a smooth compact toric variety of dimension $n$ and
let $\pi$ be a holomorphic toric Poisson structure on $X$.
For $I\in S_\Delta$, we have
\begin{equation*}\label{General-PoCoh-lemma-eqn3}
[\pi,\chi^I\cdot \cV_I(\Delta)\wedge w]=\rho(\imath_{I}\Pi)\wedge(\chi^I\cdot \cV_I(\Delta)\wedge w)=\chi^I\cdot \rho((\imath_{I}\Pi)\wedge\cE_I(\Delta))\wedge w ,
\end{equation*}
where $|I_\Delta|\leq k\leq n$ and $w\in W^{k-|I_\Delta|}$.
\end{lemma}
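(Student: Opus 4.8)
The plan is to unwind both equalities using standard properties of the Schouten bracket together with the fact that $\pi$ is a toric Poisson structure, so that $\pi = \rho(\Pi)$ for $\Pi \in \wedge^2 N_\C$ constant. First I would recall that since $\rho$ comes from the infinitesimal action of the abelian Lie algebra $\Lie(T) \cong N_\C$ on $X$, all the vector fields $\rho(e_i)$ pairwise commute; hence for $\Pi = \sum a_{ij} e_i \wedge e_j$ the bracket $[\pi, \cdot]$ acts on any product of the $\rho(e_i)$'s purely by Leibniz, contracting one factor of $\Pi$ at a time with no bracket terms surviving. Concretely, writing $\Pi = \sum a_{ij} e_i \wedge e_j$ and $w = \rho(x)$ with $x \in \wedge^{k-|I_\Delta|} N_\C$, I would compute $[\pi, \chi^I \cdot \cV_I(\Delta) \wedge w]$ directly; because $\cV_I(\Delta) = \rho(\cE_I(\Delta))$ and $w$ are $T$-invariant (they lie in $W^\bullet$), the only nonzero contribution from the Leibniz expansion is the one in which $d_\pi$ hits the function $\chi^I$, giving a factor $\imath_I$ applied to each $e_j$ of $\Pi$. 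This is where the identity $d_\pi(f) = [\pi, f] = \imath_{df}\pi$ and $d\chi^I = \chi^I \cdot (\text{the } M\text{-weight } I)$ enter, producing exactly $\rho(\imath_I \Pi) \wedge (\chi^I \cdot \cV_I(\Delta) \wedge w)$.

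Next I would establish the second equality $\rho(\imath_I\Pi) \wedge (\chi^I \cdot \cV_I(\Delta) \wedge w) = \chi^I \cdot \rho((\imath_I\Pi) \wedge \cE_I(\Delta)) \wedge w$. Here I would use that $\rho \colon \wedge^\bullet N_\C \to W^\bullet$ is an algebra homomorphism for the wedge product (stated in the excerpt: $\wedge^k N_\C \xrightarrow{\rho} W^k$ is an isomorphism and $W^k = \wedge^k W$), so $\rho(\imath_I\Pi) \wedge \rho(\cE_I(\Delta)) = \rho((\imath_I\Pi) \wedge \cE_I(\Delta))$; the scalar function $\chi^I$ simply commutes past the wedge, and $w = \rho(x)$ can be absorbed similarly. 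The key point is that $\imath_I\Pi \in N_\C$ is again a constant element, so $\rho(\imath_I\Pi)$ is a genuine $T$-invariant holomorphic vector field and multiplying it into the wedge is the same operation on both sides of $\rho$.

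The main obstacle I anticipate is making the first step genuinely rigorous rather than heuristic: the Schouten bracket $[\pi, \chi^I \cdot \cV_I(\Delta) \wedge w]$ expands by the graded Leibniz rule into a sum of terms, and I must argue that every term except the ``$\chi^I$ gets differentiated'' term vanishes. The terms where $d_\pi$ hits $\cV_I(\Delta)$ or $w$ are brackets of $\pi$ with $T$-invariant multivector fields built from commuting vector fields; one should check these are zero because $[\rho(a), \rho(b)] = \rho([a,b]_{N_\C}) = 0$ for the abelian Lie algebra $N_\C$, and then the Leibniz rule forces those contributions to vanish identically. I would phrase this as: $d_\pi = [\pi, \cdot]$ is a derivation, $[\pi, \chi^I]$ is the only nonzero ``atomic'' bracket appearing (since $[\pi, \rho(e_j)] = 0$ for all $j$), and $[\pi, \chi^I] = \rho(\imath_I\Pi)$ — possibly up to a sign convention for $\imath_I$ that I would fix once and carry through. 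With that lemma in hand the computation collapses to the two displayed equalities. I would also double-check the placement of $\chi^I$ relative to interior products (whether $\imath_I$ should be $\imath_{d\chi^I}$ divided by $\chi^I$), since that is the one spot where a constant could go astray.
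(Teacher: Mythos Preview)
Your outline is correct. The paper itself does not prove this lemma; it simply says the proof is the same as Lemma~4.4 in \cite{Hong 19} and skips it. Your direct Schouten-bracket computation---using that $\pi\in W^2$ and $\cV_I(\Delta)\wedge w\in W^k$ are built from the commuting vector fields $\rho(e_i)$, so the only surviving Leibniz term is $[\pi,\chi^I]$, together with $v_i(\chi^I)=\langle I,e_i\rangle\chi^I$ to identify $[\pi,\chi^I]$ with $\chi^I\cdot\rho(\imath_I\Pi)$---is exactly the standard argument one expects the referenced lemma to contain, and the second equality follows immediately from $\rho$ being a wedge-algebra isomorphism $\wedge^\bullet N_\C\to W^\bullet$. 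Your caveat about fixing the sign convention for $\imath_I$ once and tracking it is the only point requiring care, and you have already flagged it.
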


The proof of Lemma \ref{Poisson-Coh-lem1} is the same as 
Lemma $4.4$ in \cite{Hong 19}. Here we skip it.

By Theorem \nameref{General-multiVect-thm}, we have
\begin{equation}\label{H0kSD-eqn1}
H^0(X,\wedge^kT_{X})=\bigoplus_{I\in S_k(\Delta)}V_I^k(\Delta).
\end{equation}
For any $I\in S_\Delta\backslash S_k(\Delta)$, since $|I_\Delta|>k$, we have
\begin{equation}\label{H0kSD-eqn2}
V_I^k(\Delta)=0\quad\text{for}\quad I\in S_\Delta\backslash S_k(\Delta).
\end{equation}

By Equation \eqref{H0kSD-eqn1} and Equation \eqref{H0kSD-eqn2},  we get
the following lemma.

\begin{lemma}\label{Poisson-Coh-lem2}
Let $X=X_\Delta$ be a smooth compact toric variety of dimension $n$. We have
\begin{equation}\label{H0kSD-eqn}
H^0(X,\wedge^kT_{X})=\bigoplus_{I\in S_\Delta}V_I^k(\Delta)
\end{equation}
for all $0\leq k\leq n$.
\end{lemma}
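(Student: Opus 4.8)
The plan is to combine the decomposition from Theorem~\nameref{General-multiVect-thm} with the elementary fact that the weight spaces $V_I^k(\Delta)$ vanish precisely when the character $I$ sits outside $S_k(\Delta)$, which is exactly the content of Equation~\eqref{H0kSD-eqn2}. So the proof is essentially a two-line bookkeeping argument, and the only thing to be careful about is the indexing set over which we sum.

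First I would start from Equation~\eqref{H0kSD-eqn1}, namely $H^0(X,\wedge^kT_X)=\bigoplus_{I\in S_k(\Delta)}V_I^k(\Delta)$, which is precisely statement~(1) of Theorem~\nameref{General-multiVect-thm}. Next I would recall the chain of inclusions $S_0(\Delta)\subseteq S_1(\Delta)\subseteq\dots\subseteq S_n(\Delta)=S_\Delta$ established in Section~\ref{polytope-sect}, so that $S_\Delta=S_k(\Delta)\sqcup\bigl(S_\Delta\setminus S_k(\Delta)\bigr)$. The point is that extending the index set from $S_k(\Delta)$ to the larger set $S_\Delta$ does not change the direct sum, because every extra summand is zero: for $I\in S_\Delta\setminus S_k(\Delta)$ we have $|I_\Delta|>k$ by Lemma~\ref{SDelta-lem}(2) (equivalently by the defining formula for $N_I^k(\Delta)$), hence $N_I^k(\Delta)=0$, hence $W_I^k(\Delta)=\rho(N_I^k(\Delta))=0$, hence $V_I^k(\Delta)=\chi^I\cdot W_I^k(\Delta)=0$. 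This is exactly Equation~\eqref{H0kSD-eqn2}.

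Putting these together, I would write
\begin{equation*}
H^0(X,\wedge^kT_X)=\bigoplus_{I\in S_k(\Delta)}V_I^k(\Delta)=\bigoplus_{I\in S_k(\Delta)}V_I^k(\Delta)\oplus\bigoplus_{I\in S_\Delta\setminus S_k(\Delta)}V_I^k(\Delta)=\bigoplus_{I\in S_\Delta}V_I^k(\Delta),
\end{equation*}
valid for all $0\leq k\leq n$, which is the assertion.

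Honestly there is no real obstacle here; the lemma is a convenience restatement that fixes a uniform index set $S_\Delta$ independent of $k$, which will make the Poisson-cohomology computation in the rest of Section~\ref{sect-Pocoh} cleaner (the differential $d_\pi$ maps $V_I^k(\Delta)$-components to $V_I^{k+1}(\Delta)$-components by Lemma~\ref{Poisson-Coh-lem1}, so working weight-by-weight over the single set $S_\Delta$ is the natural setup). If anything needs care, it is only making explicit that the union $S_k(\Delta)\cup(S_\Delta\setminus S_k(\Delta))$ is disjoint and exhausts $S_\Delta$, which is immediate from $S_k(\Delta)\subseteq S_\Delta$.
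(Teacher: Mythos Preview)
Your proposal is correct and matches the paper's own argument essentially verbatim: the paper simply states that the lemma follows from Equation~\eqref{H0kSD-eqn1} (Theorem~\nameref{General-multiVect-thm}) together with Equation~\eqref{H0kSD-eqn2} (the vanishing $V_I^k(\Delta)=0$ for $I\in S_\Delta\setminus S_k(\Delta)$), which is exactly the two-line bookkeeping you describe.
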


By Lemma \ref{Poisson-Coh-lem1} and Lemma \ref{Poisson-Coh-lem2}, we have the following lemma.
\begin{lemma}\label{Poisson-Coh-lem3}
Let $X=X_\Delta$ be a smooth compact toric variety of dimension $n$ and
let $\pi$ be a holomorphic toric Poisson structure on $X$.
\begin{enumerate}
\item
For any $I\in S_{\Delta}$ and $v\in V_I^k(\Delta)$ $(0\leq k\leq n)$, we have
\begin{equation}\label{Poisson-Coh-lem3-eqn0}
d_{\pi}v=\rho(\imath_{I}\Pi)\wedge v\in V_{I}^{k+1}(\Delta).
\end{equation}
Therefore we have $d_{\pi}(V_I^k(\Delta))\subseteq V_I^{k+1}(\Delta)$.
\item
The chain complex
\begin{equation*}
 0\rightarrow H^{0}(X,\mathcal{O}_{X})\xrightarrow {d_{\pi}}
H^{0}(X,T_{X})\xrightarrow{d_{\pi}}H^{0}(X,\wedge^{2}T_{X})
\xrightarrow{d_{\pi}}
\ldots \xrightarrow{d_{\pi}}H^{0}(X,\wedge^{n}T_{X})\rightarrow 0
\end{equation*}
splits to the direct sum of the sub-chain complex 
\begin{equation}\label{Poisson-Coh-lem3-eqn1}
0\xrightarrow{d_\pi} V_I^0(\Delta)\xrightarrow{d_\pi}V_I^1(\Delta)
\xrightarrow{d_\pi}V_I^2(\Delta)\xrightarrow{d_\pi}\ldots\xrightarrow{d_\pi}V_I^{n}(\Delta)\rightarrow 0
\end{equation}
for all $I\in S_\Delta$. 
The chain complex \eqref{Poisson-Coh-lem3-eqn1} can be written as
\begin{equation}\label{Poisson-Coh-lem3-eqn2}
0\rightarrow\ldots \rightarrow 0\rightarrow V_I^{|I_\Delta|}(\Delta)\xrightarrow{\rho(\imath_{I}\Pi)\wedge\cdot} V_I^{|I_\Delta|+1}(\Delta)\xrightarrow{\rho(\imath_{I}\Pi)\wedge\cdot} 
\ldots\xrightarrow{\rho(\imath_{I}\Pi)\wedge\cdot} V_I^n(\Delta)\rightarrow 0
\end{equation}
for all $I\in S_\Delta$.
\item 
The chain complex \eqref{Poisson-Coh-lem3-eqn2} is isomorphic to the chain complex
\begin{equation}\label{Poisson-Coh-lem3-eqn3}
0\rightarrow\ldots \rightarrow 0\rightarrow N_I^{|I_\Delta|}(\Delta)\xrightarrow{\imath_{I}\Pi\wedge\cdot} N_I^{|I_\Delta|+1}(\Delta)\xrightarrow{\imath_{I}\Pi \wedge\cdot} 
\ldots\xrightarrow{\imath_{I}\Pi\wedge\cdot} N_I^n(\Delta)\rightarrow 0
\end{equation}
for all $I\in S_\Delta$.
\end{enumerate}
\end{lemma}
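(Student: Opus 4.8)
The plan is to prove the three parts of Lemma~\ref{Poisson-Coh-lem3} in order, each one building on the previous, with the bulk of the work already packaged in the earlier lemmas.

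\textbf{Part (1).} First I would fix $I\in S_\Delta$ and $v\in V_I^k(\Delta)$. If $|I_\Delta|>k$ then $V_I^k(\Delta)=0$ and there is nothing to prove, so assume $|I_\Delta|\le k$. By definition $v=\chi^I\cdot\cV_I(\Delta)\wedge w$ for some $w\in W^{k-|I_\Delta|}$. Since $\pi=\rho(\Pi)$ with $\Pi\in\wedge^2 N_\C$ a constant bivector, I would compute $d_\pi v=[\pi,v]$ using the Leibniz rule for the Schouten bracket together with the fact that $[\rho(e_i),\rho(e_j)]=0$ (the $v_i=\rho(e_i)$ commute, as $N_\C$ is abelian). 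This is exactly the content of Lemma~\ref{Poisson-Coh-lem1}, which gives
\begin{equation*}
d_\pi v=\rho(\imath_I\Pi)\wedge v=\chi^I\cdot\rho\big((\imath_I\Pi)\wedge\cE_I(\Delta)\big)\wedge w.
\end{equation*}
It then remains to observe that the right-hand side lies in $V_I^{k+1}(\Delta)$. Now $\imath_I\Pi\in N_\C$, and by Lemma~\ref{WDelta-lem} we have $(\imath_I\Pi)\wedge\cE_I(\Delta)\in N_I^{|I_\Delta|+1}(\Delta)$ precisely when $(\imath_I\Pi)\wedge\cE_I(\Delta)\wedge y=0$ for all $y\in F^\perp_I(\Delta)$ --- but $\cE_I(\Delta)$ already spans $\wedge^{|I_\Delta|}F^\perp_I(\Delta)$, so wedging with any further element of $F^\perp_I(\Delta)$ kills it. Hence $(\imath_I\Pi)\wedge\cE_I(\Delta)\in N_I^{|I_\Delta|+1}(\Delta)$, and wedging with $w\in W^{k-|I_\Delta|}=\rho(\wedge^{k-|I_\Delta|}N_\C)$ and multiplying by $\chi^I$ lands in $\chi^I\cdot W_I^{k+1}(\Delta)=V_I^{k+1}(\Delta)$.

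\textbf{Part (2).} By Lemma~\ref{LSX-lem} the Poisson cohomology of $X$ is the cohomology of the complex $H^0(X,\wedge^\bullet T_X)$ with differential $d_\pi$ (using the vanishing hypothesis on higher cohomology, though that is only needed later for Theorem~\ref{General-cohomology-thm}, not for the splitting itself). Lemma~\ref{Poisson-Coh-lem2} gives the direct sum decomposition $H^0(X,\wedge^k T_X)=\bigoplus_{I\in S_\Delta}V_I^k(\Delta)$ in each degree, and part (1) shows $d_\pi$ preserves each summand index $I$. Therefore the whole complex splits as a direct sum over $I\in S_\Delta$ of the subcomplexes \eqref{Poisson-Coh-lem3-eqn1}. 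To rewrite \eqref{Poisson-Coh-lem3-eqn1} as \eqref{Poisson-Coh-lem3-eqn2} I would note $V_I^k(\Delta)=0$ for $k<|I_\Delta|$, so the complex starts in degree $|I_\Delta|$, and by part (1) the differential on $V_I^k(\Delta)$ is $v\mapsto\rho(\imath_I\Pi)\wedge v$; this is a purely notational reindexing.

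\textbf{Part (3).} Finally I would exhibit the isomorphism of complexes \eqref{Poisson-Coh-lem3-eqn2} $\cong$ \eqref{Poisson-Coh-lem3-eqn3}. The candidate map in degree $k$ is $\rho(\chi^I\cdot-)\colon N_I^k(\Delta)\to V_I^k(\Delta)$, $x\mapsto\chi^I\cdot\rho(x)$. This is surjective by the definition of $V_I^k(\Delta)=\chi^I\cdot\rho(N_I^k(\Delta))$, and injective because $\rho\colon\wedge^k N_\C\to W^k$ is an isomorphism (stated just after \eqref{rho-def-eqn3}) and multiplication by $\chi^I$ is invertible on $T$-restrictions. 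Commutativity with the differentials is the identity $d_\pi(\chi^I\cdot\rho(x))=\chi^I\cdot\rho((\imath_I\Pi)\wedge x)$, which is Lemma~\ref{Poisson-Coh-lem1} applied with $x=\cE_I(\Delta)\wedge x'$ (any $x\in N_I^k(\Delta)$ has this form up to scalar), combined with $\rho$ being an algebra homomorphism $\wedge^\bullet N_\C\to W^\bullet$ so that $\rho(\imath_I\Pi)\wedge\rho(x)=\rho((\imath_I\Pi)\wedge x)$. The main obstacle, such as it is, is bookkeeping: one must be careful that $\imath_I\Pi$ genuinely wedges $N_I^k(\Delta)$ into $N_I^{k+1}(\Delta)$ (so the bottom complex is well-defined), which is the Lemma~\ref{WDelta-lem} argument noted in part (1), and that the degree shift conventions match. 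No deep idea is required beyond assembling Lemmas~\ref{VIk-lem}, \ref{WDelta-lem}, \ref{Poisson-Coh-lem1}, and~\ref{Poisson-Coh-lem2}.
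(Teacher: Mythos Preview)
Your proposal is correct and follows essentially the same approach as the paper: invoke Lemma~\ref{Poisson-Coh-lem1} for part~(1), combine it with Lemma~\ref{Poisson-Coh-lem2} for the splitting in part~(2), and use $V_I^k(\Delta)=\chi^I\cdot\rho(N_I^k(\Delta))$ for the isomorphism in part~(3). The only cosmetic difference is in part~(1), where the paper shows $d_\pi v\in V_I^{k+1}(\Delta)$ by the one-line rearrangement $\rho(\imath_I\Pi)\wedge v=(-1)^{|I_\Delta|}\chi^I\cdot\cV_I(\Delta)\wedge(\rho(\imath_I\Pi)\wedge w)$, whereas you route through Lemma~\ref{WDelta-lem}; both are valid, but the sign swap is quicker.
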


\begin{proof}
\begin{enumerate}
\item
For any $I\in S_{\Delta}$, if $|I_\Delta|>k$, then we have $V_I^k(\Delta)=V_I^{k+1}(\Delta)=0$.
Thus in this case, the conclusion holds automatically.

For any $I\in S_{\Delta}$, if $|I_\Delta|\leq k$, then $v\in V_I^k(\Delta)$ can be written as
\begin{equation*}
v=\chi^I\cdot\cV_I(\Delta)\wedge w,
\end{equation*}
where $w\in W^{k-|I_\Delta|}$. 
By Lemma \ref{Poisson-Coh-lem1}, we have
\begin{equation*}
d_{\pi}v=\rho(\imath_{I}\Pi)\wedge v
=(-1)^{|I_\Delta|}\chi^I\cdot \cV_I(\Delta)\wedge(\rho(\imath_{I}\Pi)\wedge w)
\in V_{I}^{k+1}(\Delta). 
\end{equation*}
Thus we have $d_{\pi}(V_I^k(\Delta))\subseteq V_I^{k+1}(\Delta)$.
\item
Since $d_{\pi}(V_I^k(\Delta))\subseteq V_I^{k+1}(\Delta)$, 
by Lemma \ref{Poisson-Coh-lem2} and Equation \eqref{Poisson-Coh-lem3-eqn0},
 the chain complex
\begin{equation*}
 0\rightarrow H^{0}(X,\mathcal{O}_{X})\xrightarrow {d_{\pi}}
H^{0}(X,T_{X})\xrightarrow{d_{\pi}}H^{0}(X,\wedge^{2}T_{X})
\xrightarrow{d_{\pi}}
\ldots \xrightarrow{d_{\pi}}H^{0}(X,\wedge^{n}T_{X})\rightarrow 0
\end{equation*}
can split to the direct sum of the sub-chain complex 
\begin{equation}\label{Poisson-Coh-lem3-eqn1-1}
0\xrightarrow{d_\pi} V_I^0(\Delta)\xrightarrow{d_\pi}V_I^1(\Delta)
\xrightarrow{d_\pi}V_I^2(\Delta)\xrightarrow{d_\pi}\ldots\xrightarrow{d_\pi}V_I^{n}(\Delta)\rightarrow 0
\end{equation}
for all $I\in S_\Delta$. 
Since $$d_{\pi}v=\rho(\imath_{I}\Pi)\wedge v$$ for all $v\in V_I^k(\Delta)$, 
and since $$V_I^k(\Delta)=0$$ for all $k<|I_\Delta|$,
the chain complex \eqref{Poisson-Coh-lem3-eqn1-1} can be written as
\begin{equation*}
0\rightarrow\ldots \rightarrow 0\rightarrow V_I^{|I_\Delta|}(\Delta)\xrightarrow{\rho(\imath_{I}\Pi)\wedge\cdot} V_I^{|I_\Delta|+1}(\Delta)\xrightarrow{\rho(\imath_{I}\Pi)\wedge\cdot} 
\ldots\xrightarrow{\rho(\imath_{I}\Pi)\wedge\cdot} V_I^n(\Delta)\rightarrow 0
\end{equation*}
for all $I\in S_\Delta$. 
\item
Since $V_I^k(\Delta)=\chi^I\cdot\rho(N_I^k(\Delta)$, 
the chain complex \eqref{Poisson-Coh-lem3-eqn2} is isomorphic to the chian complex
\eqref{Poisson-Coh-lem3-eqn3} for all $I\in S_{\Delta}$.
\end{enumerate}
\end{proof}

The following lemma is a generalization of the Lemma 4.6 in \cite{Hong 19}. 
\begin{lemma}\label{Poisson-Coh-lem4}
Let $X=X_\Delta$ be a smooth compact toric variety of dimension $n$ and
let $\pi$ be a holomorphic toric Poisson structure on $X$.
\begin{enumerate}
\item
For any $I\in S_k^{\pi}(\Delta)$ $(0\leq k\leq n)$, \i.e., $I\in S_k(\Delta)$ satisfying
$(\imath_{I}\Pi)\wedge \cE_I=0$, we have
\begin{equation}\label{Poisson-Coh-lem4-eqn1}
d_\pi(\psi)=0
\end{equation}
for all $\psi\in V_I^k(\Delta)$.
\item
For any $I\in S_k(\Delta)\backslash S_k^{\pi}(\Delta)$ $(1\leq k\leq n)$,
\i.e., $I\in S_k(\Delta)$ satisfying $(\imath_{I}\Pi)\wedge \cE_I\neq0$,
 if $\psi\in V_I^k(\Delta)$ satisfies
 \begin{equation}\label{Poisson-Coh-lem4-eqn2}
d_\pi(\psi)=0, 
\end{equation}
 then there exists 
 $\phi\in V_I^{k-1}(\Delta)$ such that
 \begin{equation}\label{Poisson-Coh-lem4-eqn3}
 \psi=d_\pi(\phi).
 \end{equation}  
 \end{enumerate}
\end{lemma}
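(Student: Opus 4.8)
The plan is to reduce everything, via part (3) of Lemma \ref{Poisson-Coh-lem3}, to a purely linear-algebra statement about the Koszul-type complex
\begin{equation*}
0\to\cdots\to 0\to N_I^{|I_\Delta|}(\Delta)\xrightarrow{\imath_I\Pi\wedge\cdot} N_I^{|I_\Delta|+1}(\Delta)\xrightarrow{\imath_I\Pi\wedge\cdot}\cdots\xrightarrow{\imath_I\Pi\wedge\cdot} N_I^n(\Delta)\to 0,
\end{equation*}
and then analyze exactness of the map ``wedge with the fixed vector $\imath_I\Pi\in N_\C$'' according to whether $\imath_I\Pi$ lies in $F^\perp_I(\Delta)\otimes\C$ or not. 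Recall that, by definition, $I\in S^\pi_\Delta$ iff $(\imath_I\Pi)\wedge\cE_I(\Delta)=0$, which by Equation \eqref{General-IPi-eqn1} is equivalent to $\imath_I\Pi\in F^\perp_I(\Delta)\otimes\C$.

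For part (1): if $I\in S^\pi_k(\Delta)$, set $i=|I_\Delta|$ and write $\psi=\chi^I\cdot\cV_I(\Delta)\wedge w$ with $w\in W^{k-i}$. By Lemma \ref{Poisson-Coh-lem1}, $d_\pi\psi=\chi^I\cdot\rho\big((\imath_I\Pi)\wedge\cE_I(\Delta)\big)\wedge w$, and the factor $(\imath_I\Pi)\wedge\cE_I(\Delta)$ vanishes by hypothesis, so $d_\pi\psi=0$. (Equivalently: every element of $N_I^k(\Delta)$ is of the form $\cE_I(\Delta)\wedge u$, and wedging with $\imath_I\Pi\in F^\perp_I(\Delta)\otimes\C$ kills $\cE_I(\Delta)$ since $\wedge^{|I_\Delta|+1}F^\perp_I(\Delta)=0$.)

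For part (2): now $\imath_I\Pi\notin F^\perp_I(\Delta)\otimes\C$, so $\imath_I\Pi\wedge\cE_I(\Delta)\neq0$, i.e., $\imath_I\Pi\neq 0$ and it is not in the span of a basis of $F_I^\perp(\Delta)$. Choose a basis $\{e(\alpha_{s_1}),\dots,e(\alpha_{s_i})\}$ of $F^\perp_I(\Delta)$, extend by $\imath_I\Pi$ (legitimate since $\imath_I\Pi\notin F^\perp_I(\Delta)\otimes\C$) and then to a basis $\{e(\alpha_{s_1}),\dots,e(\alpha_{s_i}),\,\imath_I\Pi,\,f_2,\dots,f_{n-i}\}$ of $N_\C$. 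Under this basis, $N_I^k(\Delta)=\cE_I(\Delta)\wedge(\wedge^{k-i}N_\C)$ is free on the wedges of $\cE_I(\Delta)$ with $(k-i)$-subsets of $\{\imath_I\Pi,f_2,\dots,f_{n-i}\}$, and the differential is $(\pm)\,\cE_I(\Delta)\wedge(\imath_I\Pi\wedge\cdot)$ acting on the $(n-i)$-dimensional space $\langle \imath_I\Pi,f_2,\dots,f_{n-i}\rangle$. This is precisely the Koszul complex of the exterior algebra on $n-i$ generators contracted/wedged against one of the generators, which is well known to be exact in all positive degrees (indeed exact except at the bottom). Concretely: if $\psi$ corresponds to $\cE_I(\Delta)\wedge\eta$ with $\eta\in\wedge^{k-i}\langle \imath_I\Pi,f_2,\dots,f_{n-i}\rangle$ and $\imath_I\Pi\wedge\eta=0$, then $\eta=\imath_I\Pi\wedge\eta'$ for some $\eta'$ (split $\eta$ as $\imath_I\Pi\wedge\eta_1+\eta_0$ with $\eta_0,\eta_1$ not involving $\imath_I\Pi$; the hypothesis forces $\eta_0=0$), and then $\phi:=\chi^I\cdot\cV_I(\Delta)\wedge\rho(\eta')\in V_I^{k-1}(\Delta)$ satisfies $d_\pi\phi=\pm\psi$, which after absorbing the sign into $\phi$ gives $\psi=d_\pi\phi$.

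The only genuinely delicate point is the basis-extension argument in part (2): one must verify that $\imath_I\Pi\notin F^\perp_I(\Delta)\otimes\C$ (this is exactly the negation of $I\in S^\pi_k(\Delta)$, so it is immediate), and that with respect to the resulting basis the spaces $N_I^k(\Delta)$ and the maps $\imath_I\Pi\wedge\cdot$ take the stated Koszul form — this is the same bookkeeping already invoked in the proof of Lemma \ref{WDelta-lem} and Proposition \ref{weightspace-lem}, so I would cite those computations rather than redo them. Once the complex is identified with the standard ``wedge by a basis vector'' Koszul complex on $n-i$ generators, its exactness in degrees $>|I_\Delta|$ is classical and finishes the proof; I expect this identification, not the homological algebra, to be where the care is needed.
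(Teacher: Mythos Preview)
Your proof is correct and follows essentially the same strategy as the paper: both reduce via Lemma~\ref{Poisson-Coh-lem1}/\ref{Poisson-Coh-lem3} to the linear-algebra fact that wedging by $\imath_I\Pi$ is exact on $N_I^\bullet(\Delta)$ precisely when $\imath_I\Pi\notin F_I^\perp(\Delta)\otimes\C$. The only cosmetic difference is that you first pass to the complementary subspace $\langle \imath_I\Pi,f_2,\dots,f_{n-i}\rangle$ and invoke Koszul exactness there, whereas the paper keeps $x\in\wedge^{k-i}N_\C$ and writes $x=\imath_I\Pi\wedge x_0+\sum_j e(\alpha_{s_j})\wedge x_j$ directly, noting that the $e(\alpha_{s_j})$-terms die against $\cE_I(\Delta)$; the underlying computation is identical.
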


\begin{proof}
\begin{enumerate}
\item
For $I\in S_k^{\pi}(\Delta)\subseteq S_k(\Delta)$, we have $|I_\Delta|\leq k$.
 Any element $\psi\in V_I^k(\Delta)$ can be written as 
$$\psi=\chi^I\cdot\cV_I(\Delta)\wedge w=\chi^I\cdot\rho(\cE_I(\Delta))\wedge w,$$
where $w\in W^{k-|I_\Delta|}$. 
By Lemma \ref{Poisson-Coh-lem3}, we have
\begin{equation}\label{Poisson-Coh-lem4-eqn4} 
d_\pi(\psi)=\rho(\imath_{I}\Pi)\wedge\psi
=\chi^I\cdot\rho(\imath_{I}\Pi\wedge\cE_I(\Delta))\wedge w.
\end{equation}
Since $\imath_{I}\Pi\wedge\cE_I(\Delta)=0$, we have 
$$d_\pi(\psi)=0$$ 
for all $I\in S_k^{\pi}(\Delta)$ $(0\leq k\leq n)$ and $\psi\in V_I^k(\Delta)$.

\item
If $\psi=0\in V_I^k(\Delta)$, then we can choose $\phi=0\in V_I^{k-1}(\Delta)$.
Next we suppose that $\psi\in V_I^k(\Delta)$ satisfies $\psi\neq 0$. 

\begin{enumerate} 
\item
For $I\in S_k(\Delta)\backslash S_k^{\pi}(\Delta)$, we have $k\geq |I_\Delta|$ and 
$(\imath_{I}\Pi)\wedge \cE_I(\Delta)\neq 0$.

We first prove that for $\psi\in V_I^k(\Delta)$ $(\psi\neq0)$, 
the Equation \eqref{Poisson-Coh-lem4-eqn2} implies 
$$k>|I_\Delta|.$$
 By Lemma \ref{Poisson-Coh-lem1}, we have
 \begin{equation}\label{Poisson-Coh-lem4-eqn5}
 d_\pi(\chi^I \cdot\cV_I(\Delta))=\chi^I\rho(\imath_{I}\Pi\wedge\cE_I(\Delta)).
 \end{equation}
Since $V_I^{|I_\Delta|}=\C\chi^I \cdot\cV_I(\Delta)$, the condition $(\imath_{I}\Pi)\wedge \cE_I(\Delta)\neq 0$ and Equation \eqref{Poisson-Coh-lem4-eqn5}
 imply that for any $\psi\in V_I^{|I_\Delta|}(\Delta)$ $(\psi\neq 0)$, 
we have $d_\pi(\psi)\neq0$. 
Thus we get $k>|I_\Delta|$ if $d_\pi(\psi)=0$ for $\psi\in V_I^k(\Delta)$ $(\psi\neq 0)$.

\item
Any element $\psi\in V_I^k(\Delta)$ can be written as 
\begin{equation}\label{Poisson-Coh-lem4-eqn6}
\psi=\chi^I\cdot\rho(\cE_I(\Delta)\wedge x),
\end{equation}
where $x\in\wedge^{k-|I_\Delta|}N_\C$.
By Lemma \ref{Poisson-Coh-lem1}, we have
\begin{equation}\label{Poisson-Coh-lem4-eqn7}
 d_\pi(\psi)=\chi^I\cdot\rho(\imath_{I}\Pi\wedge\cE_I(\Delta)\wedge x).
 \end{equation}
 
Suppose that 
 \begin{equation*}
 \cE_I(\Delta)=e(\alpha_{s_1}) \wedge\ldots\wedge e(\alpha_{s_i})\in\wedge^{|I_\Delta|} N.
\end{equation*}
where $\{e(\alpha_{s_1}),\ldots,e(\alpha_{s_i})\}$ is a basis of $F^{\perp}_I(\Delta)$, 
and $|I_\Delta|=i$.

For any $I\in S_k(\Delta)\backslash S_k^{\pi}(\Delta)$, the condition 
$$(\imath_{I}\Pi)\wedge \cE_I(\Delta)\neq 0$$ 
implies that $\{\imath_{I}\Pi, ~e(\alpha_{s_1}),\ldots,e(\alpha_{s_i})\}$ 
 are $\C$-linearly independent vectors in $N_\C$.

By Equation \eqref{Poisson-Coh-lem4-eqn7}, $d_\pi(\psi)=0$ implies 
\begin{equation}\label{Poisson-Coh-lem4-eqn8}
 \imath_{I}\Pi\wedge\cE_I(\Delta)\wedge x
 =\imath_{I}\Pi\wedge e(\alpha_{s_1})\wedge\ldots\wedge e(\alpha_{s_i})\wedge x=0.
 \end{equation}
Since  $\{\imath_{I}\Pi, ~e(\alpha_{s_1}),\ldots,e(\alpha_{s_i})\}$ 
 are $\C$-linearly independent vectors in $N_\C$, 
 by Equation \eqref{Poisson-Coh-lem4-eqn8}
 there exist $x_0, x_1,\ldots, x_i\in \wedge^{k-|I_\Delta|-1}N_\C$, such that
 \begin{equation}\label{Poisson-Coh-lem4-eqn9}
 x=\imath_{I}\Pi\wedge x_0+e(\alpha_{s_1})\wedge x_1
 +\ldots+e(\alpha_{s_i})\wedge x_i.
 \end{equation}
 By Equation \eqref{Poisson-Coh-lem4-eqn6} and 
 Equation \eqref{Poisson-Coh-lem4-eqn9}, and with some computations, we have
\begin{equation}\label{Poisson-Coh-lem4-eqn10}
\psi=(-1)^i \rho(\imath_{I}\Pi)\wedge(\chi^I\cdot\rho(\cE_I(\Delta)\wedge x_0)).
\end{equation}
Let $\phi=(-1)^i \chi^I\cdot\rho(\cE_I(\Delta)\wedge x_0)\in V_I^{k-1}(\Delta)$. 
By Equation \eqref{Poisson-Coh-lem4-eqn10} and 
Lemma \ref{Poisson-Coh-lem1}, we have
 $$\psi=d_\pi(\phi).$$  
 \end{enumerate}
\end{enumerate}
\end{proof}

\subsection{The proof of Theorem  \nameref{General-cohomology-thm}}
\begin{proof}
By Lemma \ref{LSX-lem}, the Poisson cohomology  $H^\bullet_\pi (X)$
is isomorphic to the cohomology of the complex
\begin{equation}\label{General-cohomology-thm-eqn1}
0\rightarrow H^{0}(X,\mathcal{O}_{X})\xrightarrow {d_{\pi}}
H^{0}(X,T_{X})\xrightarrow{d_{\pi}}
H^{0}(X,\wedge^{2}T_{X})\xrightarrow{d_{\pi}}
\ldots \xrightarrow{d_{\pi}}H^{0}(X,\wedge^{n}T_{X})\rightarrow 0.
\end{equation}

\begin{enumerate}
\item
In the case of $k=0$, by Equation \eqref{General-cohomology-thm-eqn1},
we have $$H_\pi^0(X)\cong\C.$$
Since $S_0^{\pi}(\Delta)=S_0(\Delta)=\{0\}$, we have 
$$\bigoplus_{I\in S_0^\pi(\Delta)}V_I^0=V_0^0(\Delta)=\C.$$
Thus Theorem \nameref{General-cohomology-thm} is true in the case of $k=0$.

\item
In the case of $1\leq k\leq n$, by Lemma \ref{Poisson-Coh-lem3}, 
the chain complex \eqref{General-cohomology-thm-eqn1} can split to the direct sum of
the sub-chain complex 
\begin{equation*}
0\xrightarrow{d_\pi} V_I^0(\Delta)\xrightarrow{d_\pi}V_I^1(\Delta)
\xrightarrow{d_\pi}V_I^2(\Delta)\xrightarrow{d_\pi}\ldots\xrightarrow{d_\pi}V_I^{n}(\Delta)\rightarrow 0
\end{equation*}
for all $I\in S_\Delta$. 
As a consequence, we have
\begin{equation}\label{General-cohomology-thm-eqn2}
H_{\pi}^k(X)\cong\bigoplus_{I\in S_{\Delta}}
\frac{\ker(V_I^k(\Delta)\xrightarrow{d_{\pi}}V_I^{k+1}(\Delta))}
{\Ima(V_I^{k-1}(\Delta)\xrightarrow{d_{\pi}}V_I^{k}(\Delta))},
\end{equation}
where $V_I^{n+1}(\Delta)=0$.

\begin{enumerate}
\item
For any $I\in S_\Delta\backslash S_k(\Delta)$, we have $|I_\Delta|>k$, which implies 
$$ V_I^k(\Delta)=0$$ for all $I\in S_\Delta\backslash S_k(\Delta)$. 
Thus we have
\begin{equation}\label{General-cohomology-thm-eqn2.1}
\frac{\ker(V_I^k(\Delta)\xrightarrow{d_{\pi}}V_I^{k+1}(\Delta))}
{\Ima(V_I^{k-1}(\Delta)\xrightarrow{d_{\pi}}V_I^{k}(\Delta))} =0
\end{equation}
for all $I\in S_\Delta\backslash S_k(\Delta)$.
\item
For any $I\in S_k(\Delta)\backslash S_k^\pi(\Delta)$,
by Lemma \ref{Poisson-Coh-lem4}, we have 
\begin{equation*}
\ker( V_I^k(\Delta)\xrightarrow{d_{\pi}}V_I^{k+1}(\Delta))=
\Ima(V_I^{k-1}(\Delta)\xrightarrow{d_{\pi}}V_I^{k}(\Delta)).
\end{equation*}
Therefore
\begin{equation}\label{General-cohomology-thm-eqn3}
\frac{\ker(V_I^k(\Delta)\xrightarrow{d_{\pi}}V_I^{k+1}(\Delta))}
{\Ima(V_I^{k-1}(\Delta)\xrightarrow{d_{\pi}}V_I^{k}(\Delta))} =0
\end{equation}
for all $I\in S_k(\Delta)\backslash S_k^\pi(\Delta)$.

\item
For any $I\in S_k^\pi(\Delta)$, by Lemma \ref{Poisson-Coh-lem4},  
we have
\begin{equation}\label{General-cohomology-thm-eqn4}
\ker(V_I^k(\Delta)\xrightarrow{d_{\pi}}V_I^{k+1}(\Delta))=V_I^k(\Delta).
\end{equation}

\begin{enumerate}
\item 
For any $I\in S_{k-1}^\pi(\Delta)$, by Lemma \ref{Poisson-Coh-lem4},  we have
\begin{equation}\label{General-cohomology-thm-eqn4.1.1}
\Ima(V_I^{k-1}(\Delta)\xrightarrow{d_{\pi}}V_I^{k}(\Delta))=0.
\end{equation}
\item 
For any $I\in S_{k}^\pi(\Delta)\backslash S_{k-1}^\pi(\Delta)=S^\pi(\Delta,k)$, 
we have $|I_\Delta|=k>k-1$, which implies $V_I^{k-1}(\Delta)=0.$
Hence we have
\begin{equation}\label{General-cohomology-thm-eqn4.1.2}
\Ima(V_I^{k-1}(\Delta)\xrightarrow{d_{\pi}}V_I^{k}(\Delta))=0
\end{equation}
for all $I\in S_{k}^\pi(\Delta)\backslash S_{k-1}^\pi(\Delta)$.
\end{enumerate}
By Equation \eqref{General-cohomology-thm-eqn4.1.1} and Equation 
\eqref{General-cohomology-thm-eqn4.1.2},  we have
\begin{equation}\label{General-cohomology-thm-eqn4.1}
\Ima(V_I^k(\Delta)\xrightarrow{d_{\pi}}V_I^{k+1}(\Delta))=0
\end{equation}
for all $I\in S_{k}^\pi(\Delta)$.
By Equation \eqref{General-cohomology-thm-eqn4} and Equation 
\eqref{General-cohomology-thm-eqn4.1}, we have
\begin{equation}\label{General-cohomology-thm-eqn5}
\frac{\ker(V_I^k(\Delta)\xrightarrow{d_{\pi}}V_I^{k+1}(\Delta))}
{\Ima(V_I^{k-1}(\Delta)\xrightarrow{d_{\pi}}V_I^{k}(\Delta))} =V_I^k(\Delta)
\end{equation}
for all $I\in S_{k}^\pi(\Delta)$.
\end{enumerate}

The combination of Equation \eqref{General-cohomology-thm-eqn2}, 
Equation \eqref{General-cohomology-thm-eqn2.1}, 
Equation \eqref{General-cohomology-thm-eqn3} and 
Equation \eqref{General-cohomology-thm-eqn5} proves
\begin{equation}\label{General-cohomology-thm-eqn6}
H_{\pi}^k(X)\cong\bigoplus_{I\in S_k^\pi(\Delta)}V_I^k(\Delta)
\end{equation}
for $1\leq k\leq n$.

\item
For any $0\leq k\leq n$, we have
\begin{equation}\label{General-cohomology-thm-eqn7}
S_k^\pi(\Delta)=\bigcup_{i=0}^k S^\pi(\Delta, i).
\end{equation}
For any $I\in S^\pi(\Delta, i)$, we have $|I_\Delta|=i$. 
By Proposition \ref{weightspace-lem}, we have
\begin{equation}\label{General-cohomology-thm-eqn8}
\dim V_I^k(\Delta)={{n-i}\choose{k-i}}.
\end{equation}

By Equation \eqref{General-cohomology-thm-eqn6}, 
Equation \eqref{General-cohomology-thm-eqn7} and 
Equation \eqref{General-cohomology-thm-eqn8}, we have
\begin{equation*}
\dim H_{\pi}^k(X)=\sum_{i=0}^{k}{{n-i}\choose{k-i}}\# S^\pi(\Delta, i).
\end{equation*}

\item
In the case of $k>n$, 
$H_\pi^k(X)=0$ comes directly from Equation \eqref{General-cohomology-thm-eqn1}.
\end{enumerate}
\end{proof}

\subsection{The proof of Corollary \nameref{Fano-Pcohomology}}
\begin{lemma} \cite{Materov 02}\label{Materov-lem}
Let $X$ be a smooth compact toric variety of dimension $n$, and $L$ be an ample line bundle on $X$. Then we have 
\begin{equation*}
H^i(X, \Omega^j_X\otimes L)=0
\end{equation*}
for all $i>0$ and $0\leq j\leq n$.
\end{lemma}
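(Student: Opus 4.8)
The plan is to reduce Lemma~\ref{Materov-lem} to the vanishing of higher cohomology of nef (here, ample) line bundles on complete toric varieties --- Demazure's vanishing --- by resolving $\Omega^j_X$ by sheaves whose twists by $L$ have no higher cohomology, and then running the hypercohomology spectral sequence of the resolution. This is the classical Bott--Danilov--Steenbrink circle of ideas, of which Materov's statement is one incarnation.

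Concretely, I would use Danilov's logarithmic (Ishida) resolution. Because the toric boundary $D=\sum_{\rho\in\Delta(1)}D_\rho$ is a simple normal crossings divisor on the smooth variety $X$ and $\Omega^1_X(\log D)\cong M\otimes_{\Z}\mathcal{O}_X$ is \emph{trivial}, one has for each $0\le j\le n$ an exact complex
$$
0\to\Omega^j_X\to\Omega^j_X(\log D)\to\bigoplus_{\tau\in\Delta(1)}\Omega^{j-1}_{V(\tau)}(\log)\to\bigoplus_{\tau\in\Delta(2)}\Omega^{j-2}_{V(\tau)}(\log)\to\cdots ,
$$
where $V(\tau)$ is the orbit closure of $\tau$ --- again a smooth complete toric variety, of dimension $n-\dim\tau$ --- and each $\Omega^{j-k}_{V(\tau)}(\log)$ is a trivial bundle. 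Tensoring by $L$: the term over $\tau\in\Delta(k)$ is a direct sum of copies of $L|_{V(\tau)}$, which is ample on $V(\tau)$; hence by Demazure's vanishing $H^{>0}(V(\tau),L|_{V(\tau)})=0$, so every term of the twisted resolution has vanishing higher cohomology on $X$. The spectral sequence of the resolution then degenerates and gives
$$
H^i(X,\Omega^j_X\otimes L)\;\cong\;\mathcal{H}^i\bigl(\Gamma(X,\text{twisted resolution})\bigr)\qquad(i\ge0).
$$
(Alternatively, the generalized Euler sequence $0\to\Omega^1_X\to\bigoplus_\rho\mathcal{O}_X(-D_\rho)\to\operatorname{Cl}(X)\otimes\mathcal{O}_X\to0$ yields a resolution of $\Omega^j_X$ by sums of the invariant line bundles $\mathcal{O}_X(-D_S)$; this is more elementary to write down, but the termwise vanishing after twisting by $L$ is no longer automatic, so the log--de~Rham resolution is the cleaner starting point.)

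It then remains to show that the complex of global sections $\Gamma(X,\text{twisted resolution})$ is exact in positive degrees. Everything in sight is $T$-equivariant, so this complex decomposes into weight spaces $\chi^{u}$, $u\in M$. In a fixed weight each of $H^0(X,L)$ and $H^0(V(\tau),L|_{V(\tau)})$ is at most one-dimensional, and the weight-$u$ piece of the complex becomes a purely combinatorial complex built from the cones $\tau$ of $\Delta$ for which $u$ lies in the corresponding face of the polytope $P_L$; by Danilov's lemma this combinatorial complex computes the reduced cohomology of a ``star'' inside the face lattice of $P_L$, which is contractible, so it is acyclic in positive degrees. The contractibility is exactly where ampleness of $L$ is used: it guarantees that $P_L$ is full-dimensional and that its support function is strictly convex, so the relevant subcomplexes are genuine stars.

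I expect the main obstacle to be precisely this combinatorial acyclicity (Danilov's lemma): it is not formal, it relies on the strict-convexity input supplied by ampleness, and it is the step where the Demazure-type combinatorics really enters. Everything else --- writing down the logarithmic resolution, checking its terms are trivial bundles on the orbit closures, invoking Demazure vanishing on each $V(\tau)$, and running the spectral sequence --- is routine, and once the weight-graded complexes are known to be acyclic in positive degrees the conclusion $H^i(X,\Omega^j_X\otimes L)=0$ for all $i>0$ follows at once for every $0\le j\le n$.
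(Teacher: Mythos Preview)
The paper does not prove this lemma: it is quoted from Materov \cite{Materov 02} and used as a black box in the proof of Corollary~\nameref{Fano-Pcohomology}. So there is no argument in the paper to compare your proposal against.

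That said, your outline is the standard Bott--Danilov--Steenbrink argument and is correct. The Ishida/logarithmic resolution of $\Omega^j_X$ by pushforwards of trivial bundles on the orbit closures $V(\tau)$, Demazure vanishing for $L|_{V(\tau)}$ on each stratum, and the $T$-weight decomposition of the resulting complex of global sections are all correctly identified, and you have rightly flagged the combinatorial acyclicity of the weight-$u$ complexes as the substantive step where ampleness (strict convexity of the support function of $L$) actually enters. This is exactly how Danilov, and after him Batyrev--Cox and Materov, organize the proof; Materov's paper or Oda's book (\S 3.2) carry out the details. One minor caution: saying the weight-$u$ subcomplex ``computes the reduced cohomology of a star'' is a shorthand --- the precise subcomplex of the fan (or of the face lattice of $P_L$) depends on which face of $P_L$ contains $u$, and its contractibility is a short convexity argument rather than a tautology --- but you have clearly anticipated this and it is not an obstacle.
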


{\bf Proof of Corollary \nameref{Fano-Pcohomology}}
\begin{proof}
Since $\wedge^j T_X\cong\Omega^{n-j}_X\otimes\wedge^nT_X$, 
by Lemma \ref{Materov-lem}, we have
$$H^i(X,\wedge^j T_X)\cong H^i(X,\Omega^{n-j}_X\otimes\wedge^nT_X)$$ 
for all $0\leq j\leq n$.
If $X$ is a toric Fano manifold,  then $\wedge^n T_X$ is ample. 
By Lemma \ref{Materov-lem}, we have $H^i(X,\wedge^j T_X)=0$ 
for all $i>0$ and $0\leq j\leq n$.
As an application of Theorem \nameref{General-cohomology-thm}, 
we get Corollary \nameref{Fano-Pcohomology}.
\end{proof}

\begin{bibdiv}
\begin{biblist}

\bib{Bondal}{article}{
   author={Bondal, A.},
   title={Noncommutative deformations and Poisson brackets on projective spaces},
   journal={Preprint MPI/ 93-67},
   date={1993},
}

\bib{B-G-Y 06}{article}{
   author={Brown, K. A.},
   author={Goodearl, K. R.},
   author={Yakimov, M.},
   title={Poisson structures on affine spaces and flag varieties. I. Matrix
   affine Poisson space},
   journal={Adv. Math.},
   volume={206},
   date={2006},
   number={2},
   pages={567--629},
   issn={0001-8708},
   review={\MR{2263715}},
   doi={10.1016/j.aim.2005.10.004},
}

\bib{Brylinski-Zuckerman 99}{article}{
   author={Brylinski, J. L.},
   author={Zuckerman, G.},
   title={The outer derivation of a complex Poisson manifold},
   journal={J. Reine Angew. Math.},
   volume={506},
   date={1999},
   pages={181--189},
   issn={0075-4102},
   review={\MR{1665693}},
   doi={10.1515/crll.1999.506.181},
}

\bib{Caine}{article}{
   author={Caine, Arlo},
   title={Toric Poisson structures},
   language={English, with English and Russian summaries},
   journal={Mosc. Math. J.},
   volume={11},
   date={2011},
   number={2},
   pages={205--229, 406},
   issn={1609-3321},
   review={\MR{2859234}},
}

\bib{C-F-P 16}{article}{
   author={Chen, Zhuo},
   author={Fino, Anna},
   author={Poon, Yat-Sun},
   title={Holomorphic Poisson structure and its cohomology on nilmanifolds},
   journal={Differential Geom. Appl.},
   volume={44},
   date={2016},
   pages={144--160},
   issn={0926-2245},
   review={\MR{3433981}},
   doi={10.1016/j.difgeo.2015.11.006},
}

\bib{Cox}{article}{
   author={Cox, David},
   title={What is a toric variety?},
   conference={
      title={Topics in algebraic geometry and geometric modeling},
   },
   book={
      series={Contemp. Math.},
      volume={334},
      publisher={Amer. Math. Soc., Providence, RI},
   },
   date={2003},
   pages={203--223},
   review={\MR{2039974}},
   doi={10.1090/conm/334/05983},
}

\bib{Demazure}{article}{
   author={Demazure, Michel},
   title={Sous-groupes alg\'ebriques de rang maximum du groupe de Cremona},
   language={French},
   journal={Ann. Sci. \'Ecole Norm. Sup. (4)},
   volume={3},
   date={1970},
   pages={507--588},
   issn={0012-9593},
   review={\MR{0284446}},
}

\bib{Dulgushev 09}{article}{
   author={Dolgushev, Vasiliy},
   title={The Van den Bergh duality and the modular symmetry of a Poisson
   variety},
   journal={Selecta Math. (N.S.)},
   volume={14},
   date={2009},
   number={2},
   pages={199--228},
   issn={1022-1824},
   review={\MR{2480714}},
   doi={10.1007/s00029-008-0062-z},
}

\bib{Fulton}{book}{
   author={Fulton, William},
   title={Introduction to toric varieties},
   series={Annals of Mathematics Studies},
   volume={131},
   note={The William H. Roever Lectures in Geometry},
   publisher={Princeton University Press, Princeton, NJ},
   date={1993},
   pages={xii+157},
   isbn={0-691-00049-2},
   review={\MR{1234037}},
   doi={10.1515/9781400882526},
}

\bib{G-Y 09}{article}{
   author={Goodearl, K. R.},
   author={Yakimov, M.},
   title={Poisson structures on affine spaces and flag varieties. II},
   journal={Trans. Amer. Math. Soc.},
   volume={361},
   date={2009},
   number={11},
   pages={5753--5780},
   issn={0002-9947},
   review={\MR{2529913}},
   doi={10.1090/S0002-9947-09-04654-6},
}

\bib{Gualtieri 11}{article}{
   author={Gualtieri, Marco},
   title={Generalized complex geometry},
   journal={Ann. of Math. (2)},
   volume={174},
  date={2011},
   number={1},
   pages={75--123},
   issn={0003-486X},
   review={\MR{2811595}},
   doi={10.4007/annals.2011.174.1.3},
}

\bib{Hitchin 06}{article}{
   author={Hitchin, Nigel},
   title={Instantons, Poisson structures and generalized K\"ahler geometry},
   journal={Comm. Math. Phys.},
   volume={265},
   date={2006},
   number={1},
   pages={131--164},
   issn={0010-3616},
   review={\MR{2217300}},
   doi={10.1007/s00220-006-1530-y},
}

\bib{Hitchin 11}{article}{
   author={Hitchin, Nigel},
   title={Poisson modules and generalized geometry},
   conference={
      title={Geometry and analysis. No. 1},
   },
   book={
      series={Adv. Lect. Math. (ALM)},
      volume={17},
      publisher={Int. Press, Somerville, MA},
   },
   date={2011},
   pages={403--417},
   review={\MR{2882431}},
}

\bib{Hitchin 12}{article}{
   author={Hitchin, Nigel},
   title={Deformations of holomorphic Poisson manifolds},
   language={English, with English and Russian summaries},
   journal={Mosc. Math. J.},
   volume={12},
   date={2012},
   number={3},
   pages={567--591, 669},
   issn={1609-3321},
   review={\MR{3024823}},
}

\bib{Hong 19}{article}{
   author={Hong, Wei},
   title={Poisson cohomology of holomorphic toric Poisson manifolds. I},
   journal={J. Algebra},
   volume={527},
   date={2019},
   pages={147--181},
   issn={0021-8693},
   review={\MR{3922831}},
   doi={10.1016/j.jalgebra.2019.03.001},
}

\bib{Hong-Xu 11}{article}{
   author={Hong, Wei},
   author={Xu, Ping},
   title={Poisson cohomology of del Pezzo surfaces},
   journal={J. Algebra},
   volume={336},
   date={2011},
   pages={378--390},
   issn={0021-8693},
   review={\MR{2802550}},
   doi={10.1016/j.jalgebra.2010.12.017},
}

\bib{Kontsevich 01}{article}{
   author={Kontsevich, Maxim},
   title={Deformation quantization of algebraic varieties},
   note={EuroConf\'{e}rence Mosh\'{e} Flato 2000, Part III (Dijon)},
   journal={Lett. Math. Phys.},
   volume={56},
   date={2001},
   number={3},
   pages={271--294},
   issn={0377-9017},
   review={\MR{1855264}},
   doi={10.1023/A:1017957408559},
}

\bib{L-S-X 08} {article}{
   author={Laurent-Gengoux, Camille},
   author={Sti{\'e}non, Mathieu},
   author={Xu, Ping},
   title={Holomorphic Poisson manifolds and holomorphic Lie algebroids},
   journal={Int. Math. Res. Not. IMRN},
   date={2008},
   pages={Art. ID rnn 088, 46},
   issn={1073-7928},
   review={\MR{2439547}},
   doi={10.1093/imrn/rnn088},
}

\bib{Lu-Mouquin 15}{article}{
   author={Lu, Jiang-Hua},
   author={Mouquin, Victor},
   title={On the $T$-leaves of some Poisson structures related to products
   of flag varieties},
   journal={Adv. Math.},
   volume={306},
   date={2017},
  pages={1209--1261},
   issn={0001-8708},
   review={\MR{3581329}},
   doi={10.1016/j.aim.2016.11.008},
}

\bib{Materov 02}{article}{
   author={Materov, Evgeny N.},
   title={The Bott formula for toric varieties},
   language={English, with English and Russian summaries},
   journal={Mosc. Math. J.},
   volume={2},
   date={2002},
   number={1},
   pages={161--182, 200},
   issn={1609-3321},
   review={\MR{1900589}},
}

\bib{Mayansky 15}{article}{
   author={Mayanskiy, Evgeny},
   title={Poisson cohomology of two Fano threefolds},
   journal={J. Algebra},
   volume={424},
   date={2015},
   pages={21--45},
   issn={0021-8693},
   review={\MR{3293212}},
   doi={10.1016/j.jalgebra.2014.08.049},
}

\bib{Oda}{book}{
   author={Oda, Tadao},
   title={Convex bodies and algebraic geometry},
   series={Ergebnisse der Mathematik und ihrer Grenzgebiete (3) [Results in
   Mathematics and Related Areas (3)]},
   volume={15},
   note={An introduction to the theory of toric varieties;
   Translated from the Japanese},
   publisher={Springer-Verlag, Berlin},
   date={1988},
   pages={viii+212},
   isbn={3-540-17600-4},
   review={\MR{922894}},
}

\bib{Polishchuk}{article}{
   author={Polishchuk, A.},
   title={Algebraic geometry of Poisson brackets},
   note={Algebraic geometry, 7},
   journal={J. Math. Sci. (New York)},
   volume={84},
   date={1997},
   number={5},
   pages={1413--1444},
   issn={1072-3374},
   review={\MR{1465521}},
   doi={10.1007/BF02399197},
}

\end{biblist}
\end{bibdiv}

\end{document}